\documentclass[11pt]{amsart}
\usepackage{geometry}                
\geometry{letterpaper}                   
\usepackage{graphicx}
\usepackage{amssymb}
\usepackage{epstopdf}
\usepackage{amscd,amsfonts,amsmath,amssymb,amsthm,bbm,bbold,enumerate,epsf,fancyhdr,float,graphicx,latexsym,mathrsfs,multirow,smfthm,verbatim,wasysym,xypic} \usepackage[all]{xy}\usepackage[OT2,T1]{fontenc}
\usepackage[modulo,mathlines,displaymath]{lineno}
\DeclareSymbolFont{cyrletters}{OT2}{wncyr}{m}{n}\DeclareMathSymbol{\Sha}{\mathalpha}{cyrletters}{"58}
\bibliographystyle{amsalpha}
\newcommand{\Q}{\mathbb{Q}}\newcommand{\C}{\mathbb{C}}\newcommand{\Z}{\mathbb{Z}}\newcommand{\R}{\mathbb{R}}
\newcommand{\links}{\left(\begin{array}{cc}}\newcommand{\rechts}{\end{array}\right)}\newcommand{\bai}{\left[\begin{array}{cc}}\newcommand{\dai}{\end{array}\right]}\newcommand{\hidari}{\left(\begin{array}{c}}\newcommand{\migi}{\end{array}\right)}\newcommand{\Reg}{{\mathrm{Reg}}}\newcommand{\CCC}{{\mathcal C}}\newcommand{\rank}{{\mathrm{rank}}}\newcommand{\ord}{{\mathrm{ord}}}\newcommand{\Sel}{{\mathrm{Sel}}}\renewcommand{\O}{{\mathcal{O}}}\newcommand{\Fp}{\mathbf{F}_p}\renewcommand{\geq}{\geqslant}\renewcommand{\leq}{\leqslant} \newcommand{\smat}[1]{\left( \begin{smallmatrix} #1 \end{smallmatrix} \right)}\renewcommand{\qedsymbol}{{$\mathit{QED}$}}\renewcommand{\phi}{{\varphi}}\newcommand{\Log}{\mathcal Log}\newcommand{\nuek}{{\nu_{A}}}\newcommand{\nudo}{{\nu_{B}}}
\newtheorem{theorem}{Theorem}[section]\newtheorem{conjecture}[theorem]{Conjecture}\newtheorem{corollary}[theorem]{Corollary}\newtheorem{lemma}[theorem]{Lemma}\newtheorem{proposition}[theorem]{Proposition}\newtheorem{open problem}[theorem]{Open Problem}\theoremstyle{definition}\newtheorem{definition}[theorem]{Definition}

\renewcommand{\qedsymbol}{Q.E.D.}
\newtheorem{question}{Question}

\DeclareGraphicsRule{.tif}{png}{.png}{`convert #1 `dirname #1`/`basename #1 .tif`.png}

\title[Supersingular $p$-adic BSD]{A formulation for $p$-adic versions of the Birch and Swinnerton-Dyer Conjectures in the supersingular case}
\author{Florian E. I. Sprung}

\begin{document}
\maketitle
\section{Introduction}
Let $E$ be an elliptic curve over the rational numbers $\Q$. The $\Q$-rational points form a finitely generated abelian group $E(\Q)=\Z^r\oplus E(\Q)_{tors}$, and the classical Birch and Swinnerton-Dyer conjectures predict that the order of vanishing $r^{an}_\C$ of the Hasse-Weil $L$-function $L(E,s)$ at $s=1$, an analytic quantity, should be equal to $r$, an algebraic quantity. The second part of their conjecture says that the leading Taylor coefficient of $L(E,s)$ should encode $E(\Q)_{tors}$ and the size of the Tate-Shafarevich group $\Sha(E/\Q)$, among other algebraic quantities. More precisely, this conjecture says the following.

Denote by $\omega=\omega_E$ the N\'{e}ron differential and by $\Omega_E=\int_{E(\R)}\omega\in\R^{>0}$ the N\'{e}ron period of $E$. 
We denote by $L^*(E)$ the leading coefficient of the Taylor expansion at $s=1$.

\begin{conjecture}[BSD]\quad

\begin{enumerate}

\item We have $r^{an}_\C=r$.
\item $\frac{L^*(E)}{\Omega_E}=\frac{\prod_v c_v \cdot \#\Sha(E/\Q)\cdot \Reg_\C(E/\Q)}{(\#E(\Q)_{tors})^2}$.
\end{enumerate}
Here,  $c_v$ denotes the Tamagawa number for a place $v$, and the regulator $\Reg_\C(E/\Q)$ is the discriminant of the N\'{e}ron-Tate canonical height pairing on $E(\Q)$.
\end{conjecture}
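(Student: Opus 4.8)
The plan is to attack the two halves separately, since the equality $r^{an}_\C = r$ of part (1) and the leading-coefficient formula of part (2) demand different machinery, and to be candid at the outset that a complete proof is available only in low analytic rank. The unifying principle is that both the analytic object $L(E,s)$ and the algebraic data $\Sha(E/\Q)$ and $E(\Q)$ are governed by a single Galois-cohomological invariant, the Selmer group, so I would route the entire argument through Selmer groups and their Iwasawa-theoretic deformations.

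First, for part (1) in the range $r^{an}_\C \leq 1$, I would invoke the Gross-Zagier formula, which expresses the leading term $L'(E,1)$ as a nonzero multiple of the N\'{e}ron-Tate height of a Heegner point $P_K$ defined over a suitably chosen imaginary quadratic field $K$. When $L'(E,1)\neq 0$ this forces $P_K$ to have infinite order, giving $r\geq 1$. For the matching bound $r\leq 1$ and the finiteness of $\Sha$, I would deploy Kolyvagin's Euler system of Heegner points: the derivative classes assembled from $P_K$ and its Galois conjugates annihilate the Selmer group away from a controlled factor, yielding $\dim \Sel_p(E/\Q)\leq 1$ and hence $r\leq r^{an}_\C$. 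Together with the rank-zero case (where $L(E,1)\neq 0$ plus Kolyvagin forces $\Sha$ finite and $r=0$), this settles part (1) for $r^{an}_\C\in\{0,1\}$.

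For part (2) I would fix a prime $p$, prove the $p$-part of the identity by Iwasawa theory, and then reassemble over all $p$. The engine is the Iwasawa Main Conjecture, which equates the characteristic ideal of the cotorsion Selmer group over the cyclotomic $\Z_p$-extension with the ideal generated by a $p$-adic $L$-function $L_p(E)$. A control theorem lets me descend from the $\Lambda$-module to the arithmetic of $E/\Q$ itself: the value of $L_p(E)$ at the trivial character interpolates $L^*(E)/\Omega_E$ up to an explicit $p$-adic period, while the characteristic ideal specializes to $\#\Sha(E/\Q)_{p^\infty}\cdot\prod_v c_v/(\#E(\Q)_{tors})^2$ times the $p$-adic regulator. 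Matching the two specializations, and verifying that the $p$-adic and complex regulators agree up to the expected period ratio, yields the $p$-part of (2); carrying this out for every $p$, with the Tamagawa factors at bad primes supplied by the local Bloch-Kato/Tamagawa-number computation, recovers the integral identity. This is exactly the circle of ideas the present paper extends to the supersingular primes, where $a_p=0$ and the single $L_p(E)$ is replaced by a pair $L_p^\sharp, L_p^\flat$.

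The main obstacle, and the reason the conjecture remains open in general, is the range $r^{an}_\C\geq 2$. There the Heegner point $P_K$ is torsion, so Gross-Zagier produces no points and Kolyvagin's Euler system degenerates, leaving no known construction of the $r$ independent rational points that part (1) requires and no handle on the higher-order vanishing of $L(E,s)$. Even granting part (1), the leading-coefficient formula in higher rank demands control of a rank-$r$ regulator and of the full $\Sha$, and here the Iwasawa-theoretic descent above breaks down, since the relevant specialization acquires a zero of order $r$ and the available Main Conjectures do not determine the corresponding leading term. Closing this gap is the heart of the problem.
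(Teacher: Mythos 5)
The statement you are asked to prove is the classical Birch and Swinnerton-Dyer conjecture itself. The paper does not prove it and could not: it is stated there purely as background motivation (it is a Millennium Prize problem), and everything the paper actually does is downstream of it --- formulating $p$-adic \emph{analogues} in the supersingular case. So there is no ``paper's own proof'' to match, and your proposal, by its own admission, is not a proof either: you concede that part (1) is out of reach for $r^{an}_\C \geq 2$ and that the descent for part (2) ``breaks down'' in higher rank. A text that ends by identifying the unclosed gap as ``the heart of the problem'' is a survey of partial results, not a proof of the statement.

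Beyond that structural point, the cases you do claim to settle are overstated. Gross--Zagier plus Kolyvagin does give $r = r^{an}_\C$ and finiteness of $\Sha(E/\Q)$ when $r^{an}_\C \leq 1$, but part (2) --- the exact leading-coefficient identity --- is \emph{not} known in full even there. Your strategy of proving the $p$-part by the Iwasawa Main Conjecture and ``carrying this out for every $p$'' runs into the fact that the known Main Conjectures and control theorems (Kato, Skinner--Urban, and the supersingular variants via $L_\sharp, L_\flat$) each carry hypotheses: restrictions on the reduction type at $p$, on the residual representation $E[p]$ being irreducible or surjective, and exclusions of small primes such as $p = 2$ and often $p = 3$. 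For any given curve there are primes at which no unconditional $p$-part is available, so the product over all $p$ cannot be assembled; this is precisely why the full BSD formula remains open even for curves of analytic rank $0$ and $1$. (Also, a small correction relevant to this paper: the supersingular case treated here is general $p \mid a_p$, not only $a_p = 0$; the pair $L_\sharp, L_\flat$ for arbitrary supersingular $a_p$ is the point of \cite{shuron}.) The honest conclusion is that no proof of this statement exists, and your write-up should be framed as a summary of evidence, not submitted as a proof.
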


To construct a $p$-adic  analogue, we identify algebraic numbers with $p$-adic numbers by  fixing an embedding $\overline{\Q}\hookrightarrow \C_p$. A $p$-adic analogue of this conjecture should look as follows. There should be a $p$-adic $L$-function $L_p(E,T)$, whose order of vanishing $r^{an}$ at $T=0$ should equal $r$, and whose leading Taylor coefficient should again encode algebraic quantities of $E$ including $E(\Q)_{tors}$ and $\#\Sha(E/\Q)$ as in the above formula, with the regulator $\Reg_\C$ replaced by a $p$-adic avatar. There are two types of such $p$-adic analogues.

The first type concerns the case when $p$ is a prime of ordinary reduction (meaning that $p$ is coprime to $a_p:=p+1-\#E(\Fp)$). Here, Mazur, Tate, and Teitelbaum formulated a $p$-adic version of these conjectures. The $p$-adic $L$-function $L_p(E,T)$ they employed gave rise to the expected version of $p$-adic BSD when $p$ was of good ordinary reduction, but in the split multiplicative case $r^{an}$ corresponded to $r+1$ in view of an extra zero. 

The second type is the supersingular case (the case when $p|a_p$), which is more complex. There are \textit{two} classical $p$-adic $L$-functions, denoted $L_\alpha(E,T)$ and $L_\beta(E,T)$, constructed independently by Amice and V\'{e}lu, and  Vishik. The subscripts $\alpha$ and $\beta$ denote the roots of the Hecke polynomial $Y^2-a_pY+p$. The formulation of a $p$-adic analogue of the Birch and Swinnerton-Dyer conjectures in terms of these $L_\alpha(E,T)$ and $L_\beta(E,T)$ when $p$ is of good reduction is due to Bernardi and Perrin-Riou. For questions of formulating Birch and Swinnerton-Dyer conjectures, this seems to suggest that the picture is complete except for some primes of bad reduction. The goal of this paper is to indicate that this is not the case.

We do this by formulating $p$-adic versions of BSD for supersingular primes $p$ using a more natural pair of $p$-adic $L$-functions $L_\sharp(E,T)$ and $L_\flat(E,T)$. This hints at a formulation of $p$-adic BSD in the ordinary case in terms of such a pair as well. These $p$-adic $L$-functions had been constructed by Pollack, Kobayashi, and the author in the supersingular case and are functions living in the power series ring $\Z_p[[T]]$, unlike $L_\alpha(E,T)$ and $L_\beta(E,T)$ which have more complicated growth properties. Apart from being an ingredient for a natural formulation of the Iwasawa Main Conjecture, the appearance of the Iwasawa invariants of this pair of $p$-adic $L$-functions indicates that this pair is the natural choice: The Iwasawa invariants appear in analytic estimates for the sizes of the $p$-primary parts of the Tate-Shafarevich group along the cyclotomic $\Z_p$-extensions. The term ``analytic estimates" refers to the corresponding special values of the Hasse-Weil $L$-functions twisted by characters of $p$-power conductor, which should encode these sizes. These \textit{analytic} estimates can be found in \cite{pollack} and \cite
{surprisesha}. There are algebraic counterparts of $L_\sharp(E,T)$ and $L_\flat(E,T)$, which are modified Selmer groups $\Sel^\sharp(E)$ and $\Sel^\flat(E)$. Using their Iwasawa invariants one can estimate these sizes directly, see e.g. \cite{kobayashi} and \cite{sha3}. In addition to these estimates, the Iwasawa invariants appear in upper bounds for the rank of the elliptic curve in the cyclotomic $\Z_p$-extension of $\Q$. For analytic estimates, see \cite{pollack} and \cite{surprisesha}, and for their algebraic counterparts, see \cite{kobayashi} and \cite{rank}.

Concretely, our conjecture says the following.

\begin{conjecture}[Tandem $p$-adic BSD]Let $E$ be an elliptic curve and $p$ a prime of good supersingular reduction.
Denote by $\overrightarrow{L}_p^*$ the first non-zero Taylor coefficient around $T=0$ of the vector of $p$-adic $L$-functions $(L_\sharp(E,T),L_\flat(E,T))$.
\begin{enumerate}
\item The minimum of the orders of vanishing of $L_\sharp(E,T)$ and $L_\flat(E,T)$ at $T=0$ is equal to  $r$.
\item $\overrightarrow{L}_p^*=\frac{\prod_v c_v \cdot \#\Sha(E/\Q)}{(\#E(\Q)_{tors})^2} \Reg_p^\natural(E/\Q).$
\end{enumerate}\end{conjecture}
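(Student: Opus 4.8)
The plan is to reduce the conjecture to the signed Iwasawa Main Conjecture together with a descent (control-theorem) argument, following the template by which the classical leading-coefficient formula of Birch and Swinnerton-Dyer is deduced from Iwasawa theory through the work of Schneider and Perrin-Riou, and which has been adapted to the good ordinary $p$-adic setting. The two $p$-adic $L$-functions force a \emph{vector-valued} version of this strategy: since $\overrightarrow{L}_p^*$ is a pair, the regulator $\Reg_p^\natural(E/\Q)$ must itself be read as a pair of signed regulators, and the asserted identity is an equality of vectors in which the scalar factor $\frac{\prod_v c_v \cdot \#\Sha(E/\Q)}{(\#E(\Q)_{tors})^2}$ is common to both components.

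For part (1), I would invoke the Main Conjecture for the signed Selmer groups, namely $\Char_{\Z_p[[T]]}(\X^\sharp)=(L_\sharp(E,T))$ and $\Char_{\Z_p[[T]]}(\X^\flat)=(L_\flat(E,T))$, where $\X^\sharp,\X^\flat$ are the Pontryagin duals of $\Sel^\sharp(E),\Sel^\flat(E)$ over the cyclotomic tower (one divisibility is available from \cite{kobayashi}). This turns $\ord_{T=0}L_\sharp$ and $\ord_{T=0}L_\flat$ into orders of vanishing of characteristic ideals, which a control theorem identifies with $\rank E(\Q)$ plus a nonnegative correction measuring the degeneracy of the respective signed height pairing. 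The point of the tandem formulation is that taking the minimum over $\sharp,\flat$ annihilates this correction, returning exactly $r$; the required input is that \emph{at least one} of the two signed regulators is nonzero, and the rank bounds recorded in \cite{rank} give the crucial inequality in one direction.

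For part (2), I would perform the descent at the bottom layer $T=0$. Specializing the characteristic-ideal identity and feeding the control theorem relating $\Sel^{\sharp}(E),\Sel^{\flat}(E)$ along the tower to the finite Selmer group $\Sel(E/\Q)$ into the standard Schneider-type exact sequences produces the factor $\frac{\prod_v c_v\cdot\#\Sha(E/\Q)}{(\#E(\Q)_{tors})^2}$ from the global Tamagawa and local point contributions, while the leading term of the determinant of the associated pairing supplies the component of $\Reg_p^\natural$. The interpolation properties of $L_\sharp,L_\flat$ established in \cite{pollack} are what pin down the period normalization, so that the two sides match on the nose rather than up to an undetermined $p$-adic unit.

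The hard part, and the main obstacle, is twofold. First, the full signed Main Conjecture is presently known only under hypotheses (one divisibility via Kato and Kobayashi, equality in cases by Wan), so any complete argument is necessarily conditional; more fundamentally, identifying $\Reg_p^\natural$ precisely and proving its non-vanishing is the exact analogue of the still-open non-degeneracy conjecture for the $p$-adic height pairing. Second, and genuinely new to the tandem picture, one must control which of $L_\sharp,L_\flat$ realizes the minimal order of vanishing and verify that the two leading-term formulas are mutually compatible; I expect this to require the logarithmic matrix that relates $(L_\sharp,L_\flat)$ to $(L_\alpha,L_\beta)$, so that a comparison with the Bernardi--Perrin-Riou formulation can transport the known special-value computations into the $\sharp/\flat$ normalization.
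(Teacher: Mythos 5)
You are attempting to prove what is, in the paper, a \emph{conjecture}: the Tandem $p$-adic BSD statement (Conjecture \ref{tandempadicbsd}) is the paper's proposed formulation of supersingular $p$-adic BSD, and the paper offers no proof of it. What the paper actually proves about this statement is Theorem \ref{belowthisone}: the conjecture is \emph{equivalent} to the conjecture of Bernardi and Perrin-Riou (Conjecture \ref{bernardiandperrinriou}). That equivalence is established by explicit linear algebra in the Dieudonn\'{e} module --- setting $Z=\Log_{\alpha,\beta}(1)$, introducing the vectors $\nu_\sharp,\nu_\flat$ and $N_\sharp,N_\flat$, and using linearity of $\widetilde{\Reg}_\nu$ in $\nu$ to transport the leading-term formula for $(L_\alpha^*,L_\beta^*)$ into the $\sharp/\flat$ coordinates --- which is exactly the ``comparison via the logarithmic matrix'' that you defer to the last paragraph of your proposal. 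The only unconditional result the paper records toward the conjecture itself is Kato's theorem, giving the one-sided inequality that the orders of vanishing are $\geq r(E)$.

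As a proof of the statement, your proposal has genuine gaps, several of which you concede: it is conditional on the full (equality) form of the signed Main Conjecture, on the finiteness of $\Sha(E/\Q)[p^\infty]$, and, most fundamentally, on the non-degeneracy of the signed $p$-adic height pairings; all of these are open, and the last is essentially equivalent to the conjecture's part (1), so the argument is circular at the decisive point rather than merely conditional. Moreover, even granting all of these inputs, your descent sketch does not pin down that the regulator it produces is the paper's specific $\Reg_p^\natural$, which is built from $N_\sharp,N_\flat$ via $Z^{-1}\det Z$ and carries the explicit prefactors $(-a_p^2+2a_p+p-1,\,-a_p+2)$ for odd $p$ (and the corresponding cubic expressions for $p=2$); matching those normalizations is precisely the content of the paper's equivalence theorem, so your program, if it could be completed, would in effect prove Bernardi--Perrin-Riou and then still require Theorem \ref{belowthisone} to reach the tandem formulation. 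In short, there is no proof in the paper to match your attempt against, and your attempt is a research outline resting on open conjectures, not a proof; the one step of it that can be carried out unconditionally --- the transport between the $(\alpha,\beta)$ and $(\sharp,\flat)$ normalizations --- is the paper's actual theorem.
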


Our regulator $\Reg_p^\natural(E/\Q)$ is constructed explicitly from height functions that mirror the construction of $L_\sharp(E,T)$ and $L_\flat(E,T)$. 

\begin{theorem}This conjecture is equivalent to the conjectures of Bernardi and Perrin-Riou. \end{theorem}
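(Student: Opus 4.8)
The plan is to exploit the fact that both pairs of $p$-adic $L$-functions are images of one and the same Iwasawa-cohomology class under Perrin-Riou's big logarithm map into $D_p(E)\otimes\Lambda$, read off in two different bases of the Dieudonn\'e module $D_p(E)$: the Frobenius eigenbasis $\{v_\alpha,v_\beta\}$ produces $(L_\alpha,L_\beta)$, while the integral basis underlying Kobayashi's and the author's construction produces $(L_\sharp,L_\flat)$. These two vectors are linked by the logarithmic matrix $\Log$ through
\[
\begin{pmatrix}L_\alpha\\ L_\beta\end{pmatrix}=\Log\begin{pmatrix}L_\sharp\\ L_\flat\end{pmatrix},
\]
whose entries are analytic at $T=0$. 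First I would record the one property of $\Log$ that drives everything: the constant term $\Log(0)$ lies in $\GL_2(\overline{\Q}_p)$. This is because, up to a unit at $T=0$, $\det\Log$ is a nonzero multiple of $\beta-\alpha$ (the half-logarithms $\log^+_p,\log^-_p$ entering the determinant are units at $T=0$, while $\beta\neq\alpha$ since the supersingular Hecke polynomial $Y^2-a_pY+p$ has distinct roots). Note that, unlike the split multiplicative ordinary case, there is no exceptional zero here, so these orders genuinely reflect $r$.

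Granting this, part (1) is immediate: multiplication by the constant invertible matrix $\Log(0)$ preserves the minimal $T$-adic order of a vector, since $v=\Log(0)^{-1}\bigl(\Log(0)v\bigr)$ forces $\min(\ord v_1,\ord v_2)=\min\bigl(\ord(\Log(0)v)_1,\ord(\Log(0)v)_2\bigr)$. Hence $\min(\ord L_\alpha,\ord L_\beta)=\min(\ord L_\sharp,\ord L_\flat)$, and the order-of-vanishing assertion of the Tandem conjecture holds exactly when that of Bernardi and Perrin-Riou does. Writing $r$ for this common minimum, no higher Taylor coefficient of $\Log$ can contribute to the coefficient of $T^r$ in $L_\alpha,L_\beta$, because $L_\sharp$ and $L_\flat$ have no terms below order $r$; therefore the leading-coefficient vectors transform by the same constant matrix, and $\overrightarrow{L}_p^*(\alpha,\beta)=\Log(0)\,\overrightarrow{L}_p^*(\sharp,\flat)$.

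Next I would reduce part (2) to a single identity of regulators. Applying $\Log(0)^{-1}$ to the Bernardi--Perrin-Riou identity
\[
\overrightarrow{L}_p^*(\alpha,\beta)=\frac{\prod_v c_v\cdot\#\Sha(E/\Q)}{(\#E(\Q)_{tors})^2}\,\Reg_p^{\alpha\beta}
\]
turns it into $\overrightarrow{L}_p^*(\sharp,\flat)=\frac{\prod_v c_v\cdot\#\Sha(E/\Q)}{(\#E(\Q)_{tors})^2}\,\Log(0)^{-1}\Reg_p^{\alpha\beta}$, which is the Tandem identity precisely when $\Reg_p^\natural=\Log(0)^{-1}\Reg_p^{\alpha\beta}$. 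Thus the theorem collapses to verifying this compatibility between the two regulators, and here the explicit construction of $\Reg_p^\natural$ from height functions that mirror $L_\sharp,L_\flat$ is exactly what is needed: both regulators are determinants of Perrin-Riou's $p$-adic height pairing valued in $D_p(E)$, and the $\sharp,\flat$ height is obtained from the $\alpha,\beta$ height by the same change of basis on $D_p(E)$ that relates the two Coleman maps.

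The hard part will be this regulator identity, for two reasons. First, one must pin down the precise transition matrix: the $L$-functions transform covariantly with $D_p(E)$, but the regulator is a determinant of a height pairing and so transforms contravariantly (by a transpose, or an inverse-transpose, of the basis change, together with a power of the determinant reflecting the rank $r$), so one has to check that all of these factors conspire to reproduce exactly $\Log(0)^{-1}$ rather than some twist of it. Second, one must track the arithmetic normalizations — the leading values of $\log^\pm_p$ and their derivatives at $T=0$, the factor $\beta-\alpha$, and the powers of $p$ implicit in the eigenbasis — and confirm that the corresponding normalizations built into the $\sharp,\flat$ height cancel them without residue. Once this single matrix identity is established, the equivalence of both parts of the two conjectures follows formally from the invertibility of $\Log(0)$.
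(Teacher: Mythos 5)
Your strategy coincides with the paper's exactly up to the point where the real work begins, and there it stops. Part (1) --- invertibility of $Z:=\left.\Log_{\alpha,\beta}(1+T)\right|_{T=0}$ together with the product rule, so that minimal orders of vanishing and leading-coefficient vectors transform by the constant matrix $Z$ --- is the paper's own argument (phrased there via Lemma \ref{ordersofvanishingagree} and the product rule), and your reduction of part (2) to a single regulator-compatibility statement also matches the paper's structure. But you then label that statement ``the hard part,'' list the difficulties (contravariance of regulators, normalization factors), and do not prove it. That identity \emph{is} the mathematical content of the theorem: in the paper it is not an outside fact one can quote but a computation occupying the entire proof of part (2), and it only closes up because of two ingredients your proposal never identifies.

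First, the regulator $\Reg_\nu$ is \emph{not} linear in $\nu$; what is linear in $\nu$ (for $r=r(E)>0$) is the normalized quantity $\widetilde{\Reg}_\nu:=\Reg_\nu/[\omega,\nu]^{r-1}$ (this is the Perrin-Riou/Stein--Wuthrich lemma the paper invokes). This linearity is precisely what resolves your covariant-versus-contravariant worry; without it no change of basis can be pushed through the regulators at all. Second, the compatibility is not verified against the naive matrix $Z^{-1}$: the paper \emph{defines} auxiliary vectors $(N_\sharp,N_\flat):=(\nu_B,-\nu_A)\smat{(1-\frac{1}{\alpha})^2 & 0\\ 0 & (1-\frac{1}{\beta})^2}Z^{-1}\det Z$, i.e. it builds the factors $(1-\phi)^2$ (present in Bernardi--Perrin-Riou but absent from the Tandem statement) and $Z^{-1}\det Z$ into the very vectors whose normalized heights define $\Reg_\sharp$ and $\Reg_\flat$. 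Then the alternating-pairing identities $[\omega,\nu_B]=[\nu_A,\nu_B]=-[\omega,\nu_A]$ and $\widetilde{\Reg}_{N_\sharp}=[\omega,N_\sharp]\Reg_\sharp$ collapse the Bernardi--Perrin-Riou side to $\left(\frac{[\omega,N_\sharp]}{[\nu_\sharp,\nu_\flat]}\Reg_\sharp,\ \frac{-[\omega,N_\flat]}{[\nu_\sharp,\nu_\flat]}\Reg_\flat\right)$ in the basis $\{\nu_\sharp,\nu_\flat\}$, and an explicit evaluation of the scalar prefactors produces exactly the coefficients $-a_p^2+2a_p+p-1$, $2-a_p$ (resp. their $p=2$ analogues) appearing in the definition of $\Reg_p^\natural$; one must also dispose of the case $r(E)=0$ separately, where all regulators are $1$. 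So: correct framework, complete proof of part (1), but for part (2) you have restated the theorem as an unproved matrix identity rather than proved it --- and the missing tools are the linearity of $\widetilde{\Reg}_\nu$ and the engineered definitions of $N_\sharp,N_\flat$ that make the identity come out.
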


Since we have two functions at hand, we may consider their quotient, and give the following criterion for detecting non-zero rank:

\begin{theorem} Assume property $(*)$ below holds and that $\Sha(E/\Q)[p^\infty]<\infty$. Then
$$ \rank{E(\Q)}>0 \iff \left. \frac{L_\sharp(E,T)}{L_\flat(E,T)} \right|_{T=0}\neq \frac{-a_p^2+2a_p+p-1}{2-a_p} \text{ for odd $p$, and } $$
$$ \rank{E(\Q)}>0 \iff \left. \frac{L_\sharp(E,T)}{L_\flat(E,T)} \right|_{T=0}\neq \frac{-a_2^3+2a_2^2+3a_2-4}{-a_2^2+2a_2+1} \text{ for $p=2$}. $$\end{theorem}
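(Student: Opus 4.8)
The plan is to compute the two special values $L_\sharp(E,0)$ and $L_\flat(E,0)$ explicitly, to read off their ratio, and then to match its (non)degeneracy with the rank. First I would recall the change-of-basis (``logarithm matrix'') of Pollack--Kobayashi--Sprung relating the classical pair $(L_\alpha(E,T),L_\beta(E,T))$ to $(L_\sharp(E,T),L_\flat(E,T))$, and specialize it at $T=0$. At that point every cyclotomic polynomial $\Phi_{p^n}(1+T)$ entering the construction becomes $\Phi_{p^n}(1)=p$, so the matrix collapses to a normalization of the companion matrix $\smat{a_p & 1\\ -p & 0}$ of $Y^2-a_pY+p$, whose eigenvalues are $\alpha,\beta$ and whose eigenvectors $(1,-\beta),(1,-\alpha)$ produce two linear relations expressing $L_\alpha(E,0)$ and $L_\beta(E,0)$ in terms of $L_\sharp(E,0)$ and $L_\flat(E,0)$.

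Next I would feed in the interpolation formula at the trivial character, $L_\alpha(E,0)=(1-\alpha^{-1})^2\,L(E,1)/\Omega_E$ and its $\beta$-analogue, and solve the resulting $2\times 2$ linear system for $L_\sharp(E,0)$ and $L_\flat(E,0)$. Both come out proportional to $L(E,1)/\Omega_E$, with constants symmetric in $\alpha,\beta$ and hence rational in $a_p$ and $p$; carrying out the symmetric-function bookkeeping (using $\alpha+\beta=a_p$, $\alpha\beta=p$, and $a_p\neq 2$ throughout the supersingular range) yields
\[
\frac{L_\sharp(E,0)}{L_\flat(E,0)}=\frac{-a_p^2+2a_p+p-1}{2-a_p}=:V
\]
whenever $L(E,1)\neq 0$ (for $p\geq 5$ one has $a_p=0$ and $V=(p-1)/2$, a useful sanity check). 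Since the proportionality constants are nonzero throughout the supersingular range, $L_\sharp(E,0)$ and $L_\flat(E,0)$ vanish precisely when $L(E,1)$ does, so the ratio equals the finite number $V$ if and only if $L(E,1)\neq 0$.

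It remains to identify $L(E,1)\neq 0$ with $\rank E(\Q)=0$. One implication is due to Gross--Zagier, Kolyvagin, and Kato: $L(E,1)\neq0$ forces $\rank E(\Q)=0$. For the converse I would invoke property $(*)$ together with the assumed finiteness of $\Sha(E/\Q)[p^\infty]$: under these hypotheses the order of vanishing of the pair at $T=0$ computes, through the modified Selmer groups $\Sel^\sharp(E)$ and $\Sel^\flat(E)$ and a control argument, the corank of the $p^\infty$-Selmer group of $E$, which equals $\rank E(\Q)$; thus rank $0$ returns $L_\sharp(E,0),L_\flat(E,0)\neq 0$, hence $L(E,1)\neq0$. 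Combining the two directions, $\rank E(\Q)>0$ is equivalent to the simultaneous vanishing $L_\sharp(E,0)=L_\flat(E,0)=0$, i.e. to the failure of the ratio to equal $V$, which is the odd-$p$ assertion.

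For $p=2$ the same pipeline applies, except that the dyadic cyclotomic tower contributes one additional companion-matrix factor at the bottom of the construction (reflecting the behavior of $\Phi_2$ and the degree of $\Q_2(\zeta_4)/\Q_2$); propagating this extra factor through the computation replaces the quadratic expression by the stated cubic one. I expect the main obstacle to be the converse rank direction above: tying the \emph{algebraic} rank to the simultaneous vanishing of $L_\sharp(E,0)$ and $L_\flat(E,0)$ is exactly where the deep input---property $(*)$, the control/main-conjecture machinery, and the finiteness of $\Sha[p^\infty]$---is indispensable, whereas the value $V$ itself is pure linear algebra and interpolation. A secondary but genuine nuisance is tracking the normalizations in the logarithm matrix and the interpolation factors precisely enough that the $p$-adic periods cancel and leave exactly the stated rational numbers.
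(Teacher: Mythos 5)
Your computation of the special values is sound and matches the paper's own Lemma in Section 2: specializing the logarithm matrix at $T=0$ and solving against the interpolation formulas gives $(L_\sharp(E,0),L_\flat(E,0))=(-a_p^2+2a_p+p-1,\,2-a_p)\cdot L(E,1)/\Omega_E$ for odd $p$, so the target value $V$ is pure linear algebra, as you say. The easy implication (ratio $=V$ with $L(E,1)\neq 0$ forces rank $0$, via Kato--Kolyvagin) is also fine. But the crux of the theorem is the step you dispatch in one sentence: that under $(*)$ and $\Sha(E/\Q)[p^\infty]<\infty$, ``the order of vanishing of the pair at $T=0$ computes the corank of the $p^\infty$-Selmer group.'' This does not follow from control theorems and the modified Selmer groups $\Sel^\sharp,\Sel^\flat$. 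What is known (and is quoted in the paper as Kato's theorem) is the one-sided divisibility $\mathrm{char}(X^{\sharp/\flat})\mid (L_{\sharp/\flat})$, which only yields $\ord_{T=0}\geq r(E)$ --- the wrong direction for your claim. The reverse inequality would require the full Iwasawa Main Conjecture, which is neither assumed nor available here; proving it under just $(*)$ and finiteness of $\Sha[p^\infty]$ is exactly the content of the theorem, so your argument is circular at its key point. The paper instead adapts Kurihara--Pollack's Section 1.4: property $(*)$ supplies a class $z$ in $\mathbf{H}^1_{\text{glob}}$ with nonzero localization; since $\mathbf{H}^1_{\text{glob}}$ has $\Lambda$-rank one (Kato), one gets $L_\sharp/L_\flat=\mathrm{Col}^\sharp(\mathrm{loc}_p z)/\mathrm{Col}^\flat(\mathrm{loc}_p z)$ in $\mathrm{Frac}(\Lambda)$; then the evaluation of the Coleman maps at $T=0$ (Kurihara--Pollack's Proposition 1.2, replaced for general $a_p$ and $p=2$ by the trivial-isotypical component of \cite[Proposition 4.7]{sha3}) shows this ratio equals $V$ precisely when $\mathrm{loc}_p(z)$ has nonvanishing dual exponential at the bottom layer, and global duality plus finiteness of $\Sha[p^\infty]$ translates that dichotomy into rank $0$ versus rank $>0$. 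None of this machinery appears in your proposal.

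A second, related gap is your reinterpretation of the statement. You reduce the theorem to ``rank $>0\iff L_\sharp(E,0)=L_\flat(E,0)=0$,'' treating simultaneous vanishing as automatically ``ratio $\neq V$.'' But the expression $\left.\frac{L_\sharp(E,T)}{L_\flat(E,T)}\right|_{T=0}$ is the value at $T=0$ of the quotient taken as a ($p$-adic meromorphic) function: when both numerator and denominator vanish, it is the ratio of leading Taylor coefficients, which could a priori still equal $V$. So in the rank $>0$ direction you must actually show this limiting ratio differs from $V$ (and separately address the possibility that $L_\flat$ is identically zero, which the paper explicitly flags via \cite[Conjecture 6.15]{shuron}); vanishing of the two values alone does not conclude. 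Both of these issues are resolved by the Coleman-map argument above, not by the interpolation computation, which --- as you correctly anticipate in your final paragraph --- is the shallow part of the proof.
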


This theorem is a generalization of \cite[Corollary 0.5]{kuriharapollack} who assumed $p$ to be odd and $a_p=0$, which is automatically satisfied when $p\geq5$. We remark that the proof of Kurihara and Pollack almost immediately generalizes once the $L_\sharp(E,T)$ and $L_\flat(E,T)$ have been defined in complete generality, as done in \cite{shuron}. There is only one proposition in their tools that assumes $a_p=0$, which is fixed in this paper by adhering to a generalization found in \cite{sha3}.

As a corollary to this, we get:
\begin{corollary}Let $\rank E(\Q)=0$. Then both $L_\sharp(E,T)$ and $L_\flat(E,T)$ are non-zero functions, confirming \cite[Conjecture 6.15]{shuron} in this case.
\end{corollary}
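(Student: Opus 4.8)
The plan is to deduce this directly from the rank criterion of the preceding theorem by contraposition. If $\rank E(\Q)=0$ then we are in the case $\neg\bigl(\rank E(\Q)>0\bigr)$, so the criterion asserts that the quotient
$$\left.\frac{L_\sharp(E,T)}{L_\flat(E,T)}\right|_{T=0}=c,\qquad c:=\begin{cases}\dfrac{-a_p^2+2a_p+p-1}{2-a_p}&\text{for odd }p,\\[1.5ex]\dfrac{-a_2^3+2a_2^2+3a_2-4}{-a_2^2+2a_2+1}&\text{for }p=2,\end{cases}$$
is a well-defined element of $\C_p$. The claim is that this single equality already forces both non-vanishings. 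Indeed, for the left-hand side to denote a finite number the denominator series $L_\flat(E,T)$ cannot be identically zero, which gives that $L_\flat$ is a non-zero function; and once we also know $c\neq 0$, the numerator $L_\sharp(E,T)$ cannot be identically zero either, since $L_\sharp\equiv 0$ would make the quotient equal $0\neq c$. Thus the entire corollary reduces to the assertion $c\neq 0$.

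The remaining work is therefore the purely arithmetic verification that $c\neq 0$ in every supersingular case, which I would carry out by hand. For $p\geq 5$ the Hasse bound $|a_p|\leq 2\sqrt p<p$ combined with $p\mid a_p$ forces $a_p=0$, so that $c=\tfrac{p-1}{2}\neq 0$ and the denominator $2-a_p=2$ does not vanish. The delicate primes are $p=2,3$, where $a_p$ need not be zero: for $p=3$ the supersingular condition leaves $a_3\in\{-3,0,3\}$, giving $c\in\{-\tfrac{13}{5},\,1,\,1\}$ with denominators $2-a_3\in\{5,2,-1\}$; for $p=2$ one has $a_2\in\{-2,0,2\}$, giving $c\in\{-\tfrac{6}{7},\,-4,\,2\}$ with denominators $-a_2^2+2a_2+1\in\{-7,1,1\}$. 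In every case the denominator is non-zero, so $c$ is a genuine finite number, and $c\neq 0$ throughout. This completes the deduction of both non-vanishing statements.

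The only genuinely non-formal ingredient is the finite verification of $c\neq 0$ above, and this is the main obstacle such as it is: one must treat the sporadic supersingular values at $p=2,3$ individually and confirm along the way that the denominators never vanish on them. Beyond this, one conceptual point deserves care: the equality ``quotient $=c$'' must be read as an equality of values of the rational function $L_\sharp/L_\flat$ at $T=0$, so that it simultaneously encodes $L_\flat\not\equiv 0$, the equality $\ord_{T=0}L_\sharp=\ord_{T=0}L_\flat$, and that the ratio of leading coefficients is $c$. Since the preceding theorem is proved under the hypotheses $(*)$ and $\#\Sha(E/\Q)[p^\infty]<\infty$, the corollary holds in exactly that regime, with no further input required. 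Finally, the resulting non-vanishing of both $L_\sharp(E,T)$ and $L_\flat(E,T)$ is precisely the content of \cite[Conjecture~6.15]{shuron} for curves of rank $0$, so the above confirms that conjecture in this case.
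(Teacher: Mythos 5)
Your overall strategy --- contrapose the rank criterion, then check that the constant $c$ is finite and non-zero in every supersingular case --- is the same as the paper's, and your case-by-case arithmetic is correct and even makes explicit a point the paper leaves implicit (for $p\geq 5$ the Hasse bound forces $a_p=0$ so $c=\frac{p-1}{2}$; the values $-\frac{13}{5},1,1$ for $p=3$ and $-\frac{6}{7},-4,2$ for $p=2$, all with non-vanishing denominators). But there is one genuine gap: you never rule out the degenerate case in which $L_\sharp(E,T)$ and $L_\flat(E,T)$ are \emph{both} identically zero. You try to extract this from the theorem itself, by declaring that the equality ``$L_\sharp/L_\flat|_{T=0}=c$'' encodes $L_\flat\not\equiv 0$ as part of its meaning. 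That move is circular: if both power series vanish identically, the quotient is neither ``a value different from $c$'' nor ``equal to $c$'' --- it is simply undefined --- so the biconditional of the theorem cannot be evaluated in that situation and yields no contradiction. Indeed, the paper's proof of the theorem only observes that the Kurihara--Pollack arguments survive the possibility of \emph{one} of $L_\sharp(E,T)$ or $L_\flat(E,T)$ being zero; it asserts nothing when both vanish, so the well-definedness of the quotient is precisely what must be supplied from outside rather than read off from the statement.

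The paper fills this hole with an external input: \cite[Proposition 6.14]{shuron} guarantees that at least one of $L_\sharp(E,T)$, $L_\flat(E,T)$ is a non-zero function. With that in hand, the ratio makes sense with values in $\C_p\cup\{\infty\}$ (it is $\infty$ if $L_\flat\equiv 0$ and $L_\sharp\not\equiv0$, and $0$ if $L_\sharp\equiv 0$ and $L_\flat\not\equiv0$), the rank-zero direction of the theorem forces it to equal the finite constant $c$, and then your verification that $c\neq 0$ (and $c\neq\infty$) finishes the argument exactly as you wrote it. So your proof becomes complete, and essentially identical to the paper's, once you insert this one citation (or an equivalent independent non-vanishing statement for the pair) before invoking the theorem.
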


Another object to consider given two quantities is their greatest common divisor. Denote by $d_n$ the normalized jump in ranks $\frac{1}{p^n-p^{n-1}}\left(\rank E(\Q_n)- \rank E(\Q_{n-1})\right)$, where $\Q_n$ is the $n$th layer in the cyclotomic $\Z_p$-extension numbered so that $\Q_0=\Q$. We also let $\Phi_{p^n}$ be the $p^n$th cyclotomic polynomial. Kurihara and Pollack have formulated the following conjecture in the case $a_p=0$ and $p$ odd:

\begin{conjecture}Let $E/\Q$ be an elliptic curve, and $p$ a good supersingular prime. Then
$$\gcd(L_\sharp(E,T),L_\flat(E,T))=\left(T^{r}\prod_{ d_n \geq1 \text{ and } n\geq 1}\Phi_{p^n}^{d_n-1}(1+T)\right).$$
\end{conjecture}

Note that we excluded their assumptions. This is because we give some evidence towards their conjecture by proving the following proposition which works for general supersingular $p$. Denote by $r^{an}$ the order of vanishing of $L(\alpha,T)$ (or $L(\beta,T)$) at $T=0$:

\begin{proposition}\label{kuriharapollack}Let $E/\Q$ be an elliptic curve and $p$ a prime of good reduction. For some polynomial $P_\Sha(E,T)$ with $P_\Sha(E,\zeta_{p^n}-1)\neq0$ for $n\geq0,$
$$\gcd\left(L_\sharp(E,T),L_\flat(E,T)\right)=\left(P_\Sha(E,T)\cdot T^{r^{an}}\prod_{ d_n \geq1 \text{ and } n\geq 1}\Phi_{p^n}^{\epsilon_n^{an}-1}(1+T)\right),$$
where $\epsilon_n^{an}=d_n^{an}$ or $\epsilon_n^{an}=d_n^{an}+1.$
\end{proposition}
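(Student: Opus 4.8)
The plan is to compute the greatest common divisor one prime at a time in the unique factorization domain $\Z_p[[T]]$, whose height-one primes are $(p)$, $(T)$, the $(\Phi_{p^n}(1+T))$ for $n\geq1$, and the remaining distinguished irreducible polynomials. Writing $v_\pi$ for the valuation attached to an irreducible $\pi$, one has $\gcd(L_\sharp,L_\flat)=\prod_\pi \pi^{\min(v_\pi(L_\sharp),v_\pi(L_\flat))}$, so it suffices to determine $\min(v_\pi(L_\sharp),v_\pi(L_\flat))$ for $\pi=T$ and for $\pi=\Phi_{p^n}(1+T)$, and to collect the contributions of all remaining primes (including $(p)$) into the single factor $P_\Sha(E,T)$. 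By construction this factor is coprime to $T$ and to every $\Phi_{p^n}(1+T)$, which is exactly the required non-vanishing $P_\Sha(E,\zeta_{p^n}-1)\neq0$ for $n\geq0$ (here $T=0$ is the case $n=0$).

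The one external input I would use is the factorization
\begin{equation*}
\begin{pmatrix}L_\alpha\\ L_\beta\end{pmatrix}=\mathbf{M}_{\log}\begin{pmatrix}L_\sharp\\ L_\flat\end{pmatrix},
\end{equation*}
coming from the construction of $L_\sharp,L_\flat$, where the logarithm matrix $\mathbf{M}_{\log}$ has entries in $\O[[T]]$ and determinant equal to $\log_p(1+T)/T$ up to a unit. Two consequences matter: $\det\mathbf{M}_{\log}$ is a unit at $T=0$, and it has a simple zero at each $T=\zeta_{p^n}-1$. I would also record two elementary facts about the classical pair. First, $v_\pi(L_\alpha)=v_\pi(L_\beta)$ for $\pi=T$ and $\pi=\Phi_{p^n}(1+T)$: the nontrivial automorphism of $\Q_p(\alpha)/\Q_p$ swaps $\alpha$ and $\beta$, sends $L_\alpha$ to $L_\beta$, and fixes the rational irreducible $\pi$, hence preserves $v_\pi$; I denote the common values $r^{an}$ and $d_n^{an}=d$. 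Second, the interpolation property shows the leading $\pi$-adic coefficients of $L_\alpha,L_\beta$ have the shape $\alpha^{-n}A$ and $\beta^{-n}A$ for a common non-zero $A$, a fact I will only use in spirit.

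For $\pi=T$ the matrix $\mathbf{M}_{\log}(0)$ lies in $\GL_2$, so multiplication by it and by its inverse (both having entries of non-negative $T$-adic valuation) preserves the minimum of the two orders; hence $\min(v_T(L_\sharp),v_T(L_\flat))=\min(v_T(L_\alpha),v_T(L_\beta))=r^{an}$, giving the factor $T^{r^{an}}$. For $\pi=\Phi_{p^n}(1+T)$ I would invert the factorization with the adjugate, so that each of $L_\sharp,L_\flat$ equals $(\det\mathbf{M}_{\log})^{-1}$ times an $\O[[T]]$-combination of $L_\alpha,L_\beta$; call these combinations the numerators. Since the entries of $\mathbf{M}_{\log}$ have non-negative $\pi$-valuation, each numerator has $\pi$-valuation $\geq d$, and after dividing by the simple zero of the determinant this gives $v_\pi\geq d-1$; integrality of $L_\sharp,L_\flat$ forces the result to be $\geq0$ as well. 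For the matching upper bound I would argue by contradiction: if both numerators had $\pi$-valuation $\geq d+2$, then left-multiplying the adjugate relation by $\mathbf{M}_{\log}$ and using $\mathbf{M}_{\log}\,\mathrm{adj}(\mathbf{M}_{\log})=\det\mathbf{M}_{\log}\cdot I$ would force $v_\pi(\det\mathbf{M}_{\log}\cdot L_\alpha)\geq d+2$, whereas this quantity equals $1+d$; the contradiction yields $\min(v_\pi(L_\sharp),v_\pi(L_\flat))\leq d$. Combining the two bounds gives $\min=\epsilon_n^{an}-1$ with $\epsilon_n^{an}\in\{d_n^{an},d_n^{an}+1\}$, and when $d_n^{an}=0$ the bounds collapse to $\min=0$, consistently omitting $\Phi_{p^n}$ from the product.

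The main obstacle I anticipate is not this local bookkeeping but pinning down the two properties of $\mathbf{M}_{\log}$ in the exact form used: that its determinant is $\log_p(1+T)/T$ up to a unit of $\O[[T]]$ (so that it is a unit at $T=0$ and has only simple cyclotomic zeros), and the integral normalization of its entries making the valuations above genuinely non-negative. Both should be quotable from the cited construction of $L_\sharp,L_\flat$, so the remaining work is to extract them in precisely this shape. The residual ambiguity $\epsilon_n^{an}\in\{d_n^{an},d_n^{an}+1\}$ appears to be intrinsic: it records whether the vector $(\alpha^{-n},\beta^{-n})$ does or does not lie in the image of $\mathbf{M}_{\log}(\zeta_{p^n}-1)$, equivalently in the kernel of its adjugate, and cannot be eliminated without finer information about that image.
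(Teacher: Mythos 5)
Your argument is correct and reaches the proposition's conclusion, but at the cyclotomic points it runs on a genuinely different mechanism than the paper's proof. Both proofs start from the relation $(L_\alpha,L_\beta)=(L_\sharp,L_\flat)\Log_{\alpha,\beta}(1+T)$, both handle the prime $T$ the same way (invertibility of the matrix at $T=0$ preserves the minimal order of vanishing, giving the factor $T^{r^{an}}$, cf.\ Corollary \ref{lemma3}), and both sweep $(p)$ and the non-cyclotomic irreducibles into $P_\Sha$. At $s=\zeta_{p^n}-1$, however, the paper never touches the global determinant: it writes $\Log_{\alpha,\beta}$ (up to transpose) as $\mathcal{M}\,\CCC_n\,\Xi_n$, where $\mathcal{M}=\CCC_1\cdots\CCC_{n-1}$ and the tail $\Xi_n$ are invertible at $s$ and $\CCC_n=\smat{a_p & p \\ -\Phi_{p^n}(1+T) & 0}$ is the unique factor whose determinant vanishes there; an invertible-matrix lemma (Lemma \ref{lemma1}) strips off $\Xi_n$ and $\mathcal{M}$, and an explicit case analysis on $\CCC_n$ (Lemma \ref{lemma2}, split according to whether $a_p=0$ and to the relative orders of the two coordinates) shows the minimal order drops by $0$ or $1$. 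You instead apply the adjugate identity to the whole matrix at once, using that $\det\Log_{\alpha,\beta}$ is a nonzero constant multiple of $\log_p(1+T)/T=\prod_{m\geq1}\Phi_{p^m}(1+T)/p$, hence nonvanishing at $T=0$ and with a simple zero at each $\zeta_{p^n}-1$. Your route buys a cleaner general lemma (``right multiplication by a matrix of functions analytic at $s$ whose determinant has at most a simple zero at $s$ changes the minimal order by at most one'') and avoids both the decomposition and the case analysis; it also makes the upper bound $\min\leq d_n^{an}$ immediate from analyticity of the entries, so your contradiction via numerators is more work than needed. What the paper's route buys is independence from any determinant computation: it needs only $\det\CCC_i=p\,\Phi_{p^i}(1+T)$ and invertibility of the remaining factors at $s$, which is read off directly from the construction.

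One misstatement should be repaired, though it is inessential to your argument: the entries of $\Log_{\alpha,\beta}$ are \emph{not} in $\O[[T]]$, nor even bounded --- for $a_p=0$ they are built from Pollack's half-logarithms, whose coefficients are unbounded --- and the determinant equals $\log_p(1+T)/T$ only up to a nonzero constant, not up to a unit of $\O[[T]]$. What is true, and is all you actually use, is that the entries are $p$-adic analytic on the open unit disk (this is precisely the statement of Theorem \ref{maintheorem}), so that their orders of vanishing at $T=0$ and at every $\zeta_{p^n}-1$ are non-negative, and that the zero locus of the determinant is as you describe. Integrality is needed only for $L_\sharp,L_\flat$ themselves, so that the gcd is a well-defined ideal of $\Z_p[[T]]$ and your prime-by-prime bookkeeping (with $v_{\Phi_{p^n}}$ equal to the order of vanishing at $\zeta_{p^n}-1$, by irreducibility of $\Phi_{p^n}(1+T)$ over $\Q_p$) makes sense. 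With ``entries in $\O[[T]]$'' replaced by ``entries analytic on the open unit disk'' throughout, your proof is complete.
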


Finally, we ask a question on a possible generalization of the situation to the ordinary case. When $p$ is ordinary, the functions $L_\sharp(E,T)$ and $L_\flat(E,T)$ are not uniquely defined, but their values at $T=0$ are. We find that when $p$ is odd and $a_p=2$, $L_\flat(E,0)=0$.

\begin{question} Where does this `extra zero phenomenon' in the ordinary case come from?
\end{question}

In \cite{loefflerzerbes},  Loeffler and Zerbes find a pair of Iwasawa functions in the ordinary case using the theory of Wach modules. The above question suggests that an extra zero phenomenon should occur in terms of their pair as well.

\section{Review of $p$-adic $L$-functions for elliptic curves}

Let $p$ be a prime of good reduction for our elliptic curve $E$. The work of Mazur, Swinnerton-Dyer, Amice and V\'{e}lu, and Vi\v{s}ik gives constructions of $p$-adic $L$-functions that should encode the behavior of the $\Q_n$-rational points $E(\Q_n)$ of the elliptic curve $E$. Concretely, denote by $\alpha$ and $\beta$ the roots of the Hecke polynomial $Y^2-a_pY+p$ ordered so that $\ord_p(\alpha)\leq\ord_p(\beta)$ for any $p$-adic valuation $\ord_p$. We say that $\alpha$ resp. $\beta$ is an allowable root if $\ord_p(\alpha)<\ord_p(p)$ resp. $\ord_p(\beta)<\ord_p(p)$. Notice that $\alpha$ is always allowable by convention, while $\beta$ is only when $p$ is supersingular.

\notation We denote by $\zeta_{p^n}$ a primitive $p^n$th root of unity, and we let $N=n+1$ when $p$ is odd and $N=n+2$ when $p=2$. We also let $\chi_u$ be a group morphism from $1+2p\Z_p$ into $\C_p^\times$ sending a topological generator $1+2p$ to some $u\in\C_p$ so that $|u-1|_p<1$, where $|\quad |_p$ is the normalized $p$-adic absolute value (i.e. $|\frac{1}{p}|_p=1$). 

\begin{theorem}[Mazur and Swinnerton-Dyer, Amice and V\'{e}lu, Vi\v{s}ik]\cite[Proposition in Section 14]{mtt} Let $E$ be an elliptic curve over $\Q$ and $p$ a prime of good reduction. Regard $\chi_{\zeta_{p^n}}$ and $\chi_{\zeta_{p^n}^{-1}}$ as characters of $\Z_p/p^n\Z_p.$ There is a $p$-adic analytic function $L_\alpha(E,T)$ converging on the open unit disk with the following interpolation properties:
$$L_\alpha(E,\zeta_{p^n}-1)=\frac{p^N}{\alpha^N\tau(\chi_{\zeta_{p^n}^{-1}})}\frac{L(E,\chi_{\zeta_{p^n}^{-1}},1)}{\Omega_E} , and$$
$$ L_\alpha(E,0)=\left(1-\frac{1}{\alpha}\right)^2\frac{L(E,1)}{\Omega_E}.$$
When $\beta$ is allowable (i.e. $p$ is supersingular), there is a companion $p$-adic analytic function $L_\beta(E,T)$ with the same interpolation properties with $\alpha$ replaced by $\beta$.
\end{theorem}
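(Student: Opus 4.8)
The plan is to realize $L_\alpha(E,T)$ as the Amice--Mellin transform of an $\alpha$-stabilized $p$-adic distribution built from modular symbols. First I would invoke the Birch--Manin formula expressing the twisted special values on the right-hand side in terms of modular symbols: writing $[a/m]^{+}$ for the (real part of the) modular symbol attached to the weight-two newform $f$ associated to $E$, one has an identity of the shape $\frac{L(E,\chi,1)}{\Omega_E} = \tau(\chi)\sum_a \bar\chi(a)\,[a/m]^{+}$ for a primitive Dirichlet character $\chi$ of conductor $m=p^n$, the Gauss sum $\tau(\chi)$ absorbing the archimedean normalization. This reduces the problem to packaging the rational numbers $[a/p^n]^{+}$ into a single $p$-adic object, and it explains the appearance of the factor $\tau(\chi_{\zeta_{p^n}^{-1}})$ (and the conjugate character) in the interpolation formula.

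Second, I would define a distribution $\mu_\alpha$ on $\Z_p$ by the $p$-stabilization formula $\mu_\alpha(a+p^n\Z_p) = \alpha^{-n}[a/p^n]^{+} - \alpha^{-(n+1)}[a/p^{n-1}]^{+}$, the branch attached to the $U_p$-eigenvalue $\alpha$ of the $p$-stabilized form $f_\alpha(z)=f(z)-\beta f(pz)$. The additivity required of a distribution is precisely the Hecke relation $U_p f_\alpha = \alpha f_\alpha$ rewritten as the three-term recurrence satisfied by the modular symbols; checking it is a direct computation with the $U_p$-action on $\{[a/p^n]^{+}\}$.

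Third --- and this is the crux --- I would control the growth of $\mu_\alpha$. Because $\alpha$ is allowable, i.e.\ $\ord_p(\alpha)<\ord_p(p)=1$, the boundedness of the modular symbols combined with the factor $\alpha^{-n}$ shows that $\mu_\alpha$ is an $h$-admissible distribution with $h=\ord_p(\alpha)<1$. By the theory of admissible distributions (Amice--V\'{e}lu, Vi\v{s}ik), such a distribution has a Mellin transform that is a $p$-adic analytic function on the open unit disk whose Newton polygon is governed by $h$; since $h<1$ the transform genuinely converges on the whole open disk. Setting $L_\alpha(E,T)$ equal to this transform in the variable $T=\chi_{u}(\cdot)-1$ completes the construction. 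The admissibility estimate is the main obstacle: one must upgrade the mere integrality of the modular symbols to a uniform bound on the partial-sum moments of $\mu_\alpha$ against polynomials of each degree, and this is exactly where $\ord_p(\alpha)<1$ is indispensable.

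Finally, I would read off the interpolation properties from the definition of $\mu_\alpha$. Integrating $\chi_{\zeta_{p^n}}$ against $\mu_\alpha$ kills the lower telescoping term (since a primitive character of conductor $p^n$ does not factor through $p^{n-1}$) and, after reinserting the Gauss sum and the normalizing power $p^N/\alpha^N$ from the Birch--Manin relation, yields the stated value $L_\alpha(E,\zeta_{p^n}-1)$. Evaluating at $T=0$ corresponds to integrating the trivial character; here both terms survive and combine, together with the period normalization, into the Euler-type factor $(1-\alpha^{-1})^2$, giving $L_\alpha(E,0)$. For the companion $L_\beta(E,T)$ in the supersingular case the whole argument runs verbatim with $\alpha$ replaced by $\beta$, the essential point being that $\beta$ is allowable exactly when $p$ is supersingular, so $\ord_p(\beta)<1$ still holds and the admissibility step applies.
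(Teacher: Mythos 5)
Your proposal is correct and follows essentially the same route as the source the paper cites for this statement (the paper itself gives no proof, deferring to \cite[Section 14]{mtt}): modular symbols and the Birch--Manin twisting formula, the $\alpha$-stabilized distribution $\mu_\alpha(a+p^n\Z_p)=\alpha^{-n}[a/p^n]^{+}-\alpha^{-(n+1)}[a/p^{n-1}]^{+}$, the Amice--V\'{e}lu--Vi\v{s}ik admissibility theory made available by $\ord_p(\alpha)<1$, and the telescoping computation giving the interpolation values, including $(1-\tfrac{1}{\alpha})^2$ at the trivial character. The only slight imprecision is that $h<1$ is what guarantees the $h$-admissible distribution is uniquely determined by the degree-zero data (convergence of the transform on the open disk holds for any finite growth order), but this does not affect the correctness of the construction.
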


While these classical $L_\alpha(E,T)$ have bounded coefficients when $p$ is ordinary, they are not elements of $\Z_p[[T]]$ (or even of $\Z_p[[T]]\otimes \overline{\Q}_p$) when $p$ is supersingular. The following theorem fixes this situation:

 \begin{theorem}\label{maintheorem}\cite[Theorem 5.6 for $a_p=0$]{pollack}\cite[Theorem 6.12 for general $p|a_p$]{shuron} Let $p$ be a prime of good supersingular reduction. Then there are two $p$-adic $L$-functions $L_\sharp(E,T)$ and $L_\flat(E,T)$ which are elements of $\Z_p[[T]]$ so that we may write
$$(L_\alpha(E,T),L_\beta(E,T))=(L_\sharp(E,T), L_\flat(E,T))\Log_{\alpha,\beta}(1+T),$$
where $$\Log_{\alpha,\beta}(1+T):=\lim_{n \rightarrow \infty}\smat {a_p & 1 \\ \Phi_{p^1}(1+T) & 0 }\smat {a_p & 1 \\ \Phi_{p^2}(1+T) & 0 }\cdots \smat {a_p & 1 \\ \Phi_{p^n}(1+T) & 0 }\smat{ a_p & 1 \\ p & 0}^{-(N+1)}\smat{-1 & -1\\  \beta & \alpha}$$
is a matrix of $p$-adic analytic functions converging on the open $p$-adic unit disk.
\end{theorem}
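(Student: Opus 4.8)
The plan is to read the asserted identity as \emph{defining} $(L_\sharp,L_\flat)$ and then to establish the two substantive points separately: that $\Log_{\alpha,\beta}(1+T)$ converges, and that the resulting functions lie in $\Z_p[[T]]$. First I would show $\Log_{\alpha,\beta}(1+T)$ is a matrix of functions analytic on the open unit disk. Write $A=\smat{a_p & 1\\ p & 0}$ and $M_k(1+T)=\smat{a_p & 1\\ \Phi_{p^k}(1+T) & 0}$; the crucial input is that $\Phi_{p^k}(1+T)\to p$ uniformly on each disk $|T|_p\leq\rho<1$, since every coefficient of the distinguished polynomial $\Phi_{p^k}(1+T)-p$ acquires $p$-valuation tending to $\infty$ with $k$. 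Hence $M_k\to A$, and the regularized product $\bigl(\prod_{k=1}^n M_k\bigr)A^{-(N+1)}$ converges as $n\to\infty$: diagonalizing $A$ over a ramified quadratic extension (its eigenvalues both have valuation $\tfrac12$, by the Newton polygon of $Y^2-a_pY-p$), the deviations $M_k-A=\smat{0 & 0\\ \Phi_{p^k}(1+T)-p & 0}$ are nilpotent with norm $|\Phi_{p^k}(1+T)-p|_p\to0$, and a telescoping estimate controls them against the bounded growth of the normalizing powers of $A$. I expect this step to be technical but routine.

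Second, I would extract the determinant, which organizes the rest. From $\det M_k=-\Phi_{p^k}(1+T)$, $\det A=-p$, $\det\smat{-1 & -1\\ \beta & \alpha}=\beta-\alpha$, and $\prod_{k=1}^n\Phi_{p^k}(1+T)=\frac{(1+T)^{p^n}-1}{T}$, together with $N=n+1$ the $(-1)$ signs cancel, and the limit $\frac{(1+T)^{p^n}-1}{p^n}\to\log_p(1+T)$ gives
\[
\det\Log_{\alpha,\beta}(1+T)=\frac{\beta-\alpha}{p^2}\cdot\frac{\log_p(1+T)}{T}.
\]
Thus $\Log_{\alpha,\beta}$ is invertible at $T=0$ (where $\alpha\neq\beta$) and away from the nontrivial $p$-power roots of unity, at each of which its determinant has a simple zero. (The case $p=2$ is identical up to the shift $N=n+2$.)

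Third, I would set $(L_\sharp,L_\flat):=(L_\alpha,L_\beta)\,\mathrm{adj}(\Log_{\alpha,\beta})/\det\Log_{\alpha,\beta}$, which is a priori analytic only off the points $\zeta_{p^n}-1$ $(n\geq1)$, and show it extends across them. The interpolation formulae give $\alpha^N L_\alpha(\zeta_{p^n}-1)=\beta^N L_\beta(\zeta_{p^n}-1)$, so the value vector lies on one prescribed line; on the other hand one computes the specialization $\Log_{\alpha,\beta}(\zeta_{p^n}-1)$ by telescoping, using $\Phi_{p^k}(\zeta_{p^n})=0$, $=p$, or an element of positive valuation according as $k=n$, $k>n$, or $k<n$, and finds it to be of rank one. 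Matching these two facts forces the numerator $(L_\alpha,L_\beta)\,\mathrm{adj}(\Log_{\alpha,\beta})$ to vanish at each $\zeta_{p^n}-1$, cancelling the simple zero of the determinant and producing $L_\sharp,L_\flat$ analytic on the whole disk. The conceptual reason behind this cancellation is that the $M_k$ are exactly the transfer matrices of the three-term distribution relation $(\text{projection})\,\theta_{n+1}=a_p\theta_n-\theta_{n-1}^{\uparrow}$ satisfied by the Mazur--Tate elements $\theta_n\in\Z_p[G_n]$, $G_n=\Gal(\Q_n/\Q)$, coming from the Euler factor at $p$.

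The main obstacle is integrality: proving $L_\sharp,L_\flat\in\Z_p[[T]]$, i.e.\ that their coefficients are $p$-integral, rather than merely that they are analytic. For this I would leave the analytic picture and work on the algebraic side, using the transfer matrices $M_k$ to build trace-compatible refinements of the $\theta_n$ and proving that these refinements remain in $\Z_p[G_n]$ at every finite level; this is Pollack's combinatorial lemma \cite{pollack} when $a_p=0$ and its extension in \cite{shuron} for general $p\mid a_p$, and it is the only place where passing beyond $a_p=0$ demands genuine work. Taking the inverse limit lands the refinements in $\varprojlim_n\Z_p[G_n]\cong\Z_p[[T]]$, yielding the integral $L_\sharp,L_\flat$. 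Finally, since the identity $(L_\alpha,L_\beta)=(L_\sharp,L_\flat)\Log_{\alpha,\beta}$ then holds on the set $\{\zeta_{p^n}-1\}$ accumulating at $0$, the identity principle for functions analytic on the open $p$-adic disk upgrades it to an identity of functions.
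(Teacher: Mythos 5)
A preliminary remark on the comparison target: this theorem is not proved in the paper at all --- it is quoted from \cite{pollack} (for $a_p=0$) and \cite{shuron} (for general $p\mid a_p$) in a review section --- so your attempt can only be measured against the strategy of those cited constructions. Measured that way, most of your architecture is sound and does track them: the convergence estimate (the decay of $\Phi_{p^k}(1+T)-p$, roughly $p^{-k}$ on a fixed closed subdisk, beats the growth, roughly $p^{k/2}$, of the powers of $\smat{a_p & 1\\ p & 0}^{-1}$, whose eigenvalues have valuation $\tfrac12$); the computation that $\det\Log_{\alpha,\beta}(1+T)$ is a nonzero constant times $\log_p(1+T)/T$, with simple zeros exactly at $\zeta_{p^n}-1$, $n\geq 1$; the rank-one specialization of $\Log_{\alpha,\beta}$ at those points, whose row space is precisely the line forced by $\alpha^N L_\alpha(\zeta_{p^n}-1)=\beta^N L_\beta(\zeta_{p^n}-1)$; and the recourse to Mazur--Tate elements and inverse limits for integrality, which is exactly where \cite{shuron} does the real work beyond $a_p=0$.

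The genuine gap is your final gluing step, and it fails in two ways. First, the factual claim is wrong: $|\zeta_{p^n}-1|_p=p^{-1/(p^{n-1}(p-1))}\rightarrow 1$, so the points $\zeta_{p^n}-1$ accumulate at the \emph{boundary} of the open unit disk, not at $0$; any proper closed subdisk contains only finitely many of them. Second, and more seriously, there is no identity principle of the kind you invoke for unbounded analytic functions on the open $p$-adic disk: $\log_p(1+T)$ --- which up to a nonzero constant is $T\cdot\det\Log_{\alpha,\beta}(1+T)$ --- vanishes at every one of these points without being zero. Concretely, for any nonzero vector $(v_1,v_2)$ of analytic functions, the pair $\bigl(L_\sharp+\tfrac{\log_p(1+T)}{T}v_1,\ L_\flat+\tfrac{\log_p(1+T)}{T}v_2\bigr)$ also satisfies your identity at every $\zeta_{p^n}-1$, yet not identically; so agreement of your algebraically constructed integral pair with $(L_\alpha,L_\beta)\mathrm{adj}(\Log_{\alpha,\beta})/\det\Log_{\alpha,\beta}$ at those points proves nothing by itself. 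The standard repair is quantitative rather than topological: every function in sight is of growth $O(\log_p^{1/2})$ (since $\ord_p\alpha=\ord_p\beta=\tfrac12$), a nonzero function of growth $O(\log_p^{h})$ with $h<1$ has at most roughly $hp^n+O(1)$ zeros in the disk $|T|_p\leq|\zeta_{p^n}-1|_p$, while vanishing at \emph{all} primitive $p^m$-th roots of unity minus $1$ for $m\leq n$ already forces $p^n-1$ zeros; this Amice--V\'elu/Vi\v{s}ik uniqueness theorem, not an identity principle, is what upgrades pointwise agreement to an identity of functions. Alternatively, one can avoid the matching problem entirely, as \cite{shuron} does, by deriving the matrix identity as the limit of finite-level identities between Mazur--Tate elements and the partial matrix products, so that the integral functions and the factorization emerge from one and the same construction.
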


\begin{theorem}\cite[Theorem 2.14]{surprisesha} Let $p$ be an ordinary good prime. Then the first column $\log_\alpha$ of $\Log_{\alpha,\beta}(1+T)$ converges and we have $$L_\alpha(E,T)=(L_\sharp(E,T), L_\flat(E,T))\log_\alpha, $$ for two $p$-adic analytic functions converging on the closed $p$-adic unit disk $L_\sharp(E,T)$ and  $L_\flat(E,T)$.
\end{theorem}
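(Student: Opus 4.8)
The plan is to reduce everything to the convergence of a single matrix product in the eigendirection attached to the unit root. Write $M=\smat{a_p & 1\\ p & 0}$ for the normalizing matrix and $C_k=\smat{a_p & 1\\ \Phi_{p^k}(1+T) & 0}$ for the building blocks, so that $\Log_{\alpha,\beta}(1+T)=\lim_n\big(\prod_{k=1}^n C_k\big)M^{-(N+1)}\smat{-1&-1\\ \beta&\alpha}$. The first step is the algebraic observation that $\alpha$ and $\beta$ are the eigenvalues of $M$ and that the columns of $\smat{-1&-1\\ \beta&\alpha}$ are corresponding eigenvectors; up to the sign conventions fixing $\Log_{\alpha,\beta}$, one verifies
$$M^{-(N+1)}\smat{-1&-1\\ \beta&\alpha}=\smat{-1&-1\\ \beta&\alpha}\smat{\alpha^{-(N+1)}&0\\ 0&\beta^{-(N+1)}}.$$
Reading off the first column of $\Log_{\alpha,\beta}(1+T)$ therefore isolates the $\alpha$-eigenline and produces $\log_\alpha=\lim_n\alpha^{-(N+1)}\big(\prod_{k=1}^n C_k\big)\smat{-1\\ \beta}$. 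The second, purely computational, ingredient is the identity $C_kM^{-1}=\smat{1 & 0\\ 0 & \Phi_{p^k}(1+T)/p}=:D_k$, i.e. $C_k=D_kM$, which telescopes to $\prod_{k=1}^n C_k=\big(\prod_{k=1}^n M^{k-1}D_kM^{-(k-1)}\big)M^n$. Feeding the $\alpha$-eigenvector $\smat{-1\\ \beta}$ through the trailing $M^n$ turns it into $\alpha^n$, which cancels against $\alpha^{-(N+1)}$ up to a bounded power of $\alpha$, so the claim is reduced to the convergence of $\big(\prod_{k=1}^n M^{k-1}D_kM^{-(k-1)}\big)\smat{-1\\ \beta}$.

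I would then carry out the convergence analysis in the eigenbasis of $M$. Writing $D_k=I+\big(\Phi_{p^k}(1+T)/p-1\big)E_{22}$ for the matrix unit $E_{22}$ and passing to the eigenbasis, each factor $M^{k-1}D_kM^{-(k-1)}$ becomes the identity plus a correction whose two off-diagonal entries are amplified by $(\alpha/\beta)^{k-1}$ and damped by $(\beta/\alpha)^{k-1}$ respectively. This is where the ordinary hypothesis is decisive: since $p\nmid a_p$, one root $\alpha$ is a unit and the other has valuation one, so $(\alpha/\beta)^{k-1}$ has valuation $-(k-1)$ and the channel carrying the $\beta$-direction into the $\alpha$-direction diverges --- this is exactly why the full matrix $\Log_{\alpha,\beta}(1+T)$ fails to converge and only one column survives. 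The point is that $\smat{-1\\ \beta}$ lies on the $\alpha$-eigenline, so it never feeds this divergent channel; the surviving contributions involve only the diagonal corrections $\Phi_{p^k}(1+T)/p-1$ and the damped factor $(\beta/\alpha)^{k-1}$ of valuation $+(k-1)$. Combined with the standard estimate that $\Phi_{p^k}(1+T)/p-1$ gains valuation with $k$ on every disk of radius $<1$ (which follows from $\prod_{j\le k}\Phi_{p^j}(1+T)=((1+T)^{p^k}-1)/T$), these form a convergent product, giving $\log_\alpha$ on the open unit disk. The main obstacle is precisely this last bookkeeping: I must show that projecting onto the first column excises exactly the amplified $(\alpha/\beta)^{k-1}$ terms, leaving an honestly convergent product, and this is the step I expect to require the most care.

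Finally, for the factorization and the integrality I would return to the finite level. The decomposition $(L_\alpha,L_\beta)=(L_\sharp,L_\flat)\Log_{\alpha,\beta}(1+T)$ from Theorem \ref{maintheorem} is built from a recursion among Mazur--Tate elements that is insensitive to the reduction type, and at each finite level the resulting $L_\sharp,L_\flat$ lie in $\Z_p[[T]]$; hence they converge on the closed unit disk whether $p$ is ordinary or supersingular. Reading off the first coordinate of this vector identity gives $L_\alpha=(L_\sharp,L_\flat)\log_\alpha$, an equation of first columns that remains meaningful in the ordinary case precisely because, by the previous paragraph, $\log_\alpha$ converges even though the $\beta$-column does not. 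Since both sides are analytic and agree at the interpolation points $\zeta_{p^n}-1$ together with $T=0$, which accumulate at the origin, they agree as functions, which completes the argument.
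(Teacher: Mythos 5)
Your first two paragraphs are essentially sound, and this part of your argument can be completed. Two caveats there. First, a sign issue: with the matrices as printed in this paper, $M=\smat{a_p & 1\\ p & 0}$ has characteristic polynomial $Y^2-a_pY-p$, so $\alpha,\beta$ are \emph{not} its eigenvalues and $\smat{-1\\ \beta}$ is not an eigenvector; your eigenvector identity requires the convention of \cite{shuron}, namely $M=\smat{a_p & 1\\ -p & 0}$ and $C_k=\smat{a_p & 1\\ -\Phi_{p^k}(1+T) & 0}$ (the paper itself is inconsistent, cf.\ the minus sign in its Lemma \ref{lemma2}), and you did hedge this. Second, your claim that the $\alpha$-eigenvector ``never feeds the divergent channel'' is not literally true: the second coordinate does become nonzero and is later amplified; what saves you is that amplification at index $j$ can only act on a component created by damping at some index $k>j$, so every term carries a net factor $(\beta/\alpha)^{k-j}$ of positive valuation together with the decaying factors $\Phi_{p^\bullet}(1+T)/p-1$. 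A much shorter route to the same conclusion: setting $u_n:=\alpha^{-n}C_1\cdots C_n\smat{-1\\ \beta}$, one computes $u_n-u_{n-1}=\alpha^{-n}\bigl(\Phi_{p^n}(1+T)-p\bigr)\,C_1\cdots C_{n-1}\smat{0\\ 1}$, and since $\alpha$ is a unit (ordinarity), the entries of $C_1\cdots C_{n-1}$ have Gauss norm $\leq 1$, and $\|\Phi_{p^n}(1+T)-p\|_r\to 0$ on each disk $|T|_p\leq r<1$, the sequence $u_n$ converges locally uniformly on the open disk.

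The genuine gap is your third paragraph, i.e.\ the existence of $L_\sharp,L_\flat$ and the factorization, which is actually the main content of the statement. (i) Theorem \ref{maintheorem} is a supersingular statement: when $p$ is ordinary, $\beta$ is not allowable, $L_\beta$ does not exist, and $\Log_{\alpha,\beta}$ diverges, so there is no vector identity whose ``first coordinate'' you can read off; moreover the convergence proof behind Theorem \ref{maintheorem} in \cite{shuron} uses $p\mid a_p$ in an essential way, so the construction is \emph{not} ``insensitive to the reduction type'' --- indeed the non-uniqueness of the ordinary pair (noted in the paper right after the statement) already shows the finite-level pairs cannot converge the way they do in the supersingular case. (ii) ``lie in $\Z_p[[T]]$; hence they converge on the closed unit disk'' is a non sequitur: elements of $\Z_p[[T]]$ converge only on the open disk (e.g.\ $\sum_n T^n$ diverges at $T=1$); convergence on the closed disk means coefficients tending to $0$, and what needs proving is convergence of the finite-level sequence in the Gauss norm, which is exactly where ordinarity must be used (via $|\beta^n|_p=p^{-n}$). (iii) Your concluding identity-theorem step is false in $p$-adic analysis: the points $\zeta_{p^n}-1$ satisfy $|\zeta_{p^n}-1|_p\to 1$, so they accumulate at the \emph{boundary}, not at the origin, and an analytic function on the open unit disk can vanish at all of them without vanishing identically ($\log_p(1+T)$ does exactly this); since $\log_\alpha$ is unbounded, no Weierstrass-preparation rigidity is available for the difference of the two sides, so agreement at these points plus $T=0$ proves nothing. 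The factorization has to be obtained from the construction itself --- by defining the pair as a limit of finite-level identities among Mazur--Tate elements and controlling the errors by powers of $\beta$ --- which is what the cited reference \cite{surprisesha} does; it cannot be recovered a posteriori by comparing values.
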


We remark that these $L_\sharp(E,T)$ and  $L_\flat(E,T)$ are not uniquely defined in the ordinary case, but their values at $T=0$ are unique:

\begin{lemma}The value of the vector $\left(L_\sharp(E,0),L_\flat(E,0)\right)$ equals\begin{linenomath}$$\begin{cases}(-a_p^2+2a_p+p-1,-a_p+2)\cdot\frac{L(E,1)}{\Omega_E}& \text{ when $p$ is odd,}\\(-a_p^3+2a_p^2+2pa_p-a_p-2p,-a_p^2+2a_p+p-1)\cdot\frac{L(E,1)}{\Omega_E}& \text{ when $p$ is even.}
\end{cases}$$\end{linenomath}
\end{lemma}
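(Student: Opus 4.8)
The plan is to evaluate the defining matrix identity of Theorem~\ref{maintheorem} (and its ordinary analogue) at the trivial character $T=0$ and simply read off the two coordinates. The key simplification is that at $T=0$ one has $1+T=1$, and since $\Phi_{p^k}(1)=p$ for every $k\geq 1$, each factor $\smat{a_p & 1\\ \Phi_{p^k}(1+T) & 0}$ in the infinite product collapses to the single constant matrix $C:=\smat{a_p & 1\\ p & 0}$. Consequently the product telescopes against the correction factor $C^{-(N+1)}$: the truncated product is literally constant in $n$, so the limit exists and equals a fixed short word in $C$ and the root matrix $\smat{-1 & -1\\ \beta & \alpha}$. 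This is exactly what makes $\Log_{\alpha,\beta}(1)$ meaningful even in the ordinary case, where the full matrix otherwise fails to converge — at $T=0$ no convergence of the second column is actually needed. I would record that the exponent $N=n+1$ for odd $p$ and $N=n+2$ for $p=2$ yields two telescoped words differing by one further factor of $C$; this single extra factor is the structural reason the $p=2$ vector is the odd-$p$ vector pushed one degree higher (indeed the stated even-$p$ pair is obtained from the odd-$p$ pair by one more multiplication by a $2\times2$ matrix built from $a_p$ and $p$, which also explains why the even-$p$ $\flat$-entry equals the odd-$p$ $\sharp$-entry).

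Next I would feed in the trivial-character interpolation value $L_\alpha(E,0)=\left(1-\tfrac1\alpha\right)^2\tfrac{L(E,1)}{\Omega_E}$ from the Mazur--Swinnerton-Dyer/Amice--V\'elu/Vi\v{s}ik theorem, together with the companion quantity $\left(1-\tfrac1\beta\right)^2\tfrac{L(E,1)}{\Omega_E}$ on the $\beta$-line. Inverting the explicit $2\times2$ specialization of $\Log_{\alpha,\beta}(1)$ then expresses $\left(L_\sharp(E,0),L_\flat(E,0)\right)$ as a vector with entries in $\alpha,\beta$. The crucial point is that this pair is invariant under interchanging the roles of $\alpha$ and $\beta$ (the interchange swaps $L_\alpha\leftrightarrow L_\beta$ and the two columns of the logarithm matrix simultaneously), so the answer is a symmetric function of the roots; the terms carrying $\sqrt{a_p^2-4p}$ must therefore cancel. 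Substituting $\alpha+\beta=a_p$ and $\alpha\beta=p$ collapses everything to a polynomial in $a_p$ and $p$, which I would match against the two displayed cases.

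The main obstacle is the ordinary case itself — precisely the source of the preceding remark. There $L_\beta(E,T)$ is not a bounded $p$-adic $L$-function, only the first column $\log_\alpha$ of the logarithm matrix converges, and $L_\sharp,L_\flat$ are genuinely non-unique \emph{as functions}; so both the $\beta$-line input and the invertibility used in the previous paragraph must be justified. The cleanest route I envisage is to prove the formula first where both roots are allowable, where the matrix is invertible and the pair is unique, and then transport the identity to $T=0$ in the ordinary case by continuity from the levels $n\geq1$, at which the relation is pinned down by the honest interpolation property. One must then check that the remaining freedom in $(L_\sharp,L_\flat)$ — which lives in the kernel of pairing against $\log_\alpha$ — contributes nothing at $T=0$; this vanishing is exactly the statement that the \emph{value}, unlike the function, is canonical. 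I expect this last step, together with keeping the $p=2$ normalization ($N=n+2$, and the auxiliary factor of $2$ in the definition of $\chi_u$) straight, to be where the real work lies, the matrix specialization and the linear algebra of the earlier steps being routine once the collapse $\Phi_{p^k}(1)=p$ is in hand.
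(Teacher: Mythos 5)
The paper itself proves nothing here: its proof is a one-line citation of the tables in \cite{shuron} (before Proposition 6.14, supersingular case) and \cite{surprisesha} (before Conjecture 5.18, general case). Your self-contained derivation is therefore a genuinely different route, and in the supersingular case it does work, for exactly the reason you identify: since $\Phi_{p^k}(1)=p$, the truncated products are constant in $n$ and the limit collapses to a fixed word, namely $Z=\mathcal{C}^{-2}\smat{-1 & -1\\ \beta & \alpha}$ for odd $p$ and $\mathcal{C}^{-3}\smat{-1 & -1\\ \beta & \alpha}$ for $p=2$. One warning, though: taken literally, the matrix displayed in Theorem \ref{maintheorem}, $\smat{a_p & 1\\ p & 0}$, has characteristic polynomial $Y^2-a_pY-p$, and with it your final matching step \emph{fails} (one gets entries with denominators $p$; for $a_p=0$ one gets $(1-p,-2)\frac{L(E,1)}{\Omega_E}$ instead of $(p-1,2)\frac{L(E,1)}{\Omega_E}$). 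The normalization consistent with \cite{shuron} --- and with the matrices $\CCC_n=\smat{a_p & p\\ -\Phi_{p^n}(1+T) & 0}$ used in Section 5 of this very paper --- has lower-left entries $-\Phi_{p^k}(1+T)$ and $-p$, so that $\mathcal{C}=\smat{a_p & 1\\ -p & 0}$ has eigenvalues $\alpha,\beta$ with eigenvectors the columns of $\smat{-1 & -1\\ \beta & \alpha}$. With that fix the linear algebra collapses as you predict:
\begin{equation*}
\left(L_\sharp(E,0),L_\flat(E,0)\right)=\frac{L(E,1)}{\Omega_E}\,\left((\alpha-1)^2,(\beta-1)^2\right)\smat{-1 & -1\\ \beta & \alpha}^{-1}=\left(-a_p^2+2a_p+p-1,\,2-a_p\right)\frac{L(E,1)}{\Omega_E}
\end{equation*}
for odd $p$, manifestly symmetric in $\alpha,\beta$; and your structural observation that the $p=2$ row is the odd-$p$ row times one more factor of $\mathcal{C}$ is exactly right (it is what forces the even-$p$ $\flat$-entry to equal the odd-$p$ $\sharp$-entry).

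The genuine gap is the ordinary case, which the lemma also covers (this is why the paper cites \cite{surprisesha} ``for the general case''). Your proposed mechanism --- that the ambiguity in $(L_\sharp,L_\flat)$ lies in the kernel of pairing against $\log_\alpha$ and ``contributes nothing at $T=0$'' --- is not correct as stated. At $T=0$ one has $\log_\alpha(0)=\alpha^{-2}\smat{-1\\ \beta}\neq 0$, so a kernel pair $(f,g)$ satisfies only the single linear relation $f(0)=\beta g(0)$; kernel membership alone allows $(f(0),g(0))=(\beta c,c)$ for any $c$, which would shift the value of the vector at $0$. What must rescue canonicity is the requirement that both entries of the pair be \emph{bounded} (integral) power series, and your outline never converts boundedness into vanishing at $T=0$; likewise ``continuity from the levels $n\geq1$'' is not an argument --- there is no deformation between ordinary and supersingular, and the finite-level interpolation pins the pair down only modulo $\omega_n=(1+T)^{p^n}-1$, so passing to the value at $T=0$ is precisely the content of the construction in \cite{surprisesha}, not an afterthought. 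In short: your supersingular computation is correct (and more informative than the paper's citation), but the well-definedness and evaluation at $T=0$ in the ordinary case --- the only hard point of the ``general case'' --- is assumed rather than proved.
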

\begin{proof}This follows from the table before Proposition 6.14. in \cite{shuron} in the supersingular case and the one before Conjecture 5.18 in \cite{surprisesha} for the general case.
\end{proof}

\section{The behavior at $T=0$: A criterion for non-zero rank}
The goal of this short section is to prove:

\begin{theorem} Assume property $(*)$ below holds and that $\Sha(E/\Q)[p^\infty]<\infty$. Then
$$ \rank{E(\Q)}>0 \iff \left. \frac{L_\sharp(E,T)}{L_\flat(E,T)} \right|_{T=0}\neq \frac{-a_p^2+2a_p+p-1}{2-a_p} \text{ for odd $p$, and } $$
$$ \rank{E(\Q)}>0 \iff \left. \frac{L_\sharp(E,T)}{L_\flat(E,T)} \right|_{T=0}\neq \frac{-a_2^3+2a_2^2+3a_2-4}{-a_2^2+2a_2+1} \text{ for $p=2$}. $$
\end{theorem}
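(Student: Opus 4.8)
The plan is to extract everything from the Lemma computing $(L_\sharp(E,0),L_\flat(E,0))$ and reduce the criterion to a single statement about orders of vanishing. For good supersingular $p$ (so that $p\mid a_p$) and $p$ odd, set $c=-a_p^2+2a_p+p-1$ and $b=2-a_p$, the numerator and denominator of the displayed constant (for $p=2$ use the cubic expressions of the even case). The Lemma reads $(L_\sharp(E,0),L_\flat(E,0))=(c,b)\cdot L(E,1)/\Omega_E$, so the constant $c/b$ is exactly the formal value $L_\sharp(E,0)/L_\flat(E,0)$; reducing modulo $p$ and using $p\mid a_p$ gives $b\equiv 2$ and $c\equiv -1$, so $b$ and $c$ are nonzero (and for $p=2$ the denominator is again a $p$-adic unit). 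Now introduce $M(T)=b\,L_\sharp(E,T)-c\,L_\flat(E,T)$, which by the Lemma vanishes at $T=0$ \emph{unconditionally}. Writing $m=\min(\ord_{T=0}L_\sharp,\ord_{T=0}L_\flat)$, one always has $\ord_{T=0}M\geq m$, and a short check on leading coefficients (using $b,c\neq0$) shows that the value $\left.\frac{L_\sharp(E,T)}{L_\flat(E,T)}\right|_{T=0}\in\Proj^1$ equals $c/b$ precisely when $\ord_{T=0}M>m$. The theorem is therefore equivalent to the clean statement $\rank E(\Q)>0\iff\ord_{T=0}M=m$.

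This reduction makes the easy implication immediate. If $\rank E(\Q)=0$ and $\Sha(E/\Q)[p^\infty]<\infty$, then Kolyvagin's Euler system bound gives $L(E,1)\neq0$, whence $L_\flat(E,0)=b\cdot L(E,1)/\Omega_E\neq0$, so $m=0$; since $M(0)=0$ forces $\ord_{T=0}M\geq 1>0=m$, the value is $c/b$. Taking the contrapositive yields $\left(\text{value}\neq c/b\right)\Rightarrow\rank E(\Q)>0$.

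The content is the reverse implication $\rank E(\Q)>0\Rightarrow\ord_{T=0}M=m$. Here $\rank E(\Q)>0$ forces $L(E,1)=0$ (Gross--Zagier--Kolyvagin), so $L_\sharp(E,0)=L_\flat(E,0)=0$ and I must prove $M$ does not over-vanish. I would first use Proposition~\ref{kuriharapollack}: since $P_\Sha(E,0)\neq0$ and $\Phi_{p^n}(1)=p$, the gcd vanishes to order exactly $r^{an}$, and coprimality of $L_\sharp/\gcd$ and $L_\flat/\gcd$ forces $m=r^{an}$. Then I would invoke the $p$-adic leading-term formula of Bernardi and Perrin-Riou (available under property $(*)$, and matched to the pair $(L_\sharp,L_\flat)$ through the equivalence established earlier in the paper), which expresses the leading Taylor coefficients of $L_\sharp,L_\flat$ as the components of the regulator vector $\Reg_p^\natural$ up to one common nonzero scalar $\prod_v c_v\cdot\#\Sha(E/\Q)/(\#E(\Q)_{tors})^2$. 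Under this identification, $\ord_{T=0}M>m$ would force the combination $b\,(\Reg_p^\natural)_\sharp-c\,(\Reg_p^\natural)_\flat$ to vanish.

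The hard part will be showing that this last combination is nonzero. Because the height functions defining $\Reg_p^\natural$ are constructed to mirror $L_\sharp$ and $L_\flat$, the combination $b\,(\Reg_p^\natural)_\sharp-c\,(\Reg_p^\natural)_\flat$ is the exact height-theoretic analogue of $M$, and I expect it to equal, up to a $p$-adic unit, the Bernardi--Perrin-Riou $p$-adic height regulator attached to a single allowable root; its nonvanishing is then precisely the nondegeneracy packaged into property $(*)$. Making this identification rigorously for arbitrary $a_p$ is the delicate point: it is the single step where the argument of Kurihara and Pollack used $a_p=0$, and I would replace their proposition by its generalization in \cite{sha3}, running the $p=2$ case through the even-case coefficients of the Lemma verbatim.
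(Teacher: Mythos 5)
Your reduction to the order-of-vanishing statement for $M(T)=b\,L_\sharp(E,T)-c\,L_\flat(E,T)$ is fine, and your constants check out against the Lemma (including at $p=2$). But both implications, as you prove them, rest on inputs that do not exist. For the direction you call easy, you derive $L(E,1)\neq 0$ from $\rank E(\Q)=0$ and $\Sha(E/\Q)[p^\infty]<\infty$ by citing ``Kolyvagin's Euler system bound.'' Kolyvagin proves the \emph{opposite} implication ($L(E,1)\neq0$ implies rank zero and finite $\Sha$); what you need is the converse theorem, which is a deep open-ended statement (known only much later, under restrictive hypotheses, and not for this setting) and is not a consequence of the hypotheses of the theorem. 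Moreover the actual argument never establishes $L(E,1)\neq0$: in the paper's proof (following \cite[Section 1.4]{kuriharapollack}), one uses that $\mathbf{H}^1_{\text{glob}}$ is free of rank one over the Iwasawa algebra (Kato), writes Kato's element as $z=\lambda g$ for a generator $g$, and notes $(L_\sharp,L_\flat)=\lambda\cdot\bigl(Col^\sharp(\mathrm{loc}\,g),Col^\flat(\mathrm{loc}\,g)\bigr)$, so the ratio at $T=0$ is that of $Col(\mathrm{loc}\,g)$; property $(*)$, finiteness of $\Sha$, and Poitou--Tate then show that when the rank is zero the localization of $g_0$ does \emph{not} lie in $E(\Q_p)\widehat{\otimes}\Q_p$, and the interpolation formula for the Coleman maps at the trivial character forces the value vector to be proportional to $(c,b)$ --- giving ratio $c/b$ even if $\lambda(0)=0$, i.e.\ even if $L(E,1)=0$.

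For the direction you call hard, your argument is circular in substance: you invoke the Bernardi--Perrin-Riou leading-term formula as ``available under property $(*)$,'' but that formula is precisely Conjecture \ref{bernardiandperrinriou} of this paper (equivalently Conjecture \ref{tandempadicbsd}), not a theorem; and you then need the nonvanishing of $b\,(\Reg_p^\natural)_\sharp-c\,(\Reg_p^\natural)_\flat$, which you claim is ``the nondegeneracy packaged into property $(*)$.'' It is not: property $(*)$ asserts only that the composite $\mathbf{H}^1_{\text{glob}}\rightarrow H^1(\Q,T_p(E))\rightarrow H^1(\Q_p,T_p(E))$ is nonzero, whereas nondegeneracy of the supersingular $p$-adic height pairing is a separate open conjecture. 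So your proof of the key implication assumes statements strictly stronger than (and including) the conjectures this paper is formulating, while the theorem is meant to be unconditional given $(*)$ and finiteness of $\Sha[p^\infty]$. The paper's route avoids heights and regulators entirely: when the rank is positive, finiteness of $\Sha$ forces the localization of $g_0$ into $E(\Q_p)\widehat{\otimes}\Q_p$, and the needed local computation --- the values and first derivatives of $Col^\sharp,Col^\flat$ on local points, which is \cite[Proposition 1.2]{kuriharapollack} when $a_p=0$ --- is supplied for general $a_p$ and $p=2$ by the trivial-tame-character component of the inverse limit in \cite[Proposition 4.7]{sha3}, yielding a derivative vector proportional to a constant vector not parallel to $(c,b)$, with proportionality factor nonzero by $(*)$. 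That is the step your sketch gestures at but does not supply.
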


Let $T_p(E)$ be the Tate module for $E$. 

Here is property $(*)$: The composite of natural maps
$$ \mathbf{H}^1_{\text{glob}}=\varprojlim H^1_{\text{\'{e}t}}(\O_{\Q_n}[1/S],T_p(E))\rightarrow H^1(\Q,T_p(E))\rightarrow H^1(\Q_p,T_p(E))$$

is not zero. Here, the limit is taken with respect to corestriction, and $S$ is the product of the bad reduction primes and $p$. 

We remark that property $(*)$ should always be true.

\begin{proof}[Proof of Theorem]The arguments of Kurihara and Pollack of \cite[Section 1.4]{kuriharapollack} almost work with the appropriate modifications. In terms of notation, we write $Col(z)=(Col^\sharp(z),Col^\flat(z))$ with the Coleman maps from \cite{shuron} instead of the functions $h_z(T)$ and $k_z(T)$. We note that their arguments work in spite of having to take into account the possibility of one of $L_\sharp(E,T)$ or $L_\flat(E,T)$ to be zero, cf. \cite[6.15]{shuron} and the surrounding discussions! Finally, \cite[Proposition 1.2]{kuriharapollack} is only proved in the case $a_p=0$. To remedy this, we refer to the isotypical component of the trivial tame character of the inverse limit of \cite[Proposition 4.7]{sha3} and remark that the arguments found therein all apply when $p=2$ as well.\end{proof}

\begin{corollary} \cite[Conjecture 6.15]{shuron} said that both $L_\sharp(E,T)$ or $L_\flat(E,T)$ are non-zero. This conjecture holds in the case where $\rank E(\Q)=0$.
\end{corollary}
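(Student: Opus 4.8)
The plan is to deduce the corollary directly from the non-zero rank criterion established just above, reading it in contrapositive form. Write $c_p$ for the constant appearing in that criterion, namely $c_p=\frac{-a_p^2+2a_p+p-1}{2-a_p}$ for odd $p$ and $c_p=\frac{-a_2^3+2a_2^2+3a_2-4}{-a_2^2+2a_2+1}$ for $p=2$. The criterion then says that the hypothesis $\rank E(\Q)=0$ forces $\left.\frac{L_\sharp(E,T)}{L_\flat(E,T)}\right|_{T=0}=c_p$, so the whole corollary will rest on understanding this single equation.

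First I would check that $c_p$ is a finite, non-zero rational number for every supersingular $p$. For odd $p$ the numerator factors as $-a_p^2+2a_p+p-1=p-(a_p-1)^2$, which never vanishes because a prime is never a perfect square, and the denominator $2-a_p$ never vanishes either, since supersingularity gives $p\mid a_p$ and the Hasse bound $|a_p|\le 2\sqrt p$ rules out $a_p=2$ for odd $p$. For $p=2$, the constraints $2\mid a_2$ and $|a_2|\le 2\sqrt 2$ leave only $a_2\in\{0,\pm 2\}$, and one verifies in each of these three cases that numerator and denominator of $c_2$ are non-zero. Hence $c_p$ is always finite and non-zero.

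Next I would extract the conclusion from the equation $\left.\frac{L_\sharp(E,T)}{L_\flat(E,T)}\right|_{T=0}=c_p$. Since the value of this quotient at $T=0$ is the finite non-zero constant $c_p$, the denominator $L_\flat(E,T)$ cannot be the zero power series (otherwise the quotient would be undefined), and the numerator $L_\sharp(E,T)$ cannot be zero either (otherwise the quotient would equal $0\neq c_p$). This gives exactly that both $L_\sharp(E,T)$ and $L_\flat(E,T)$ are non-zero elements of $\Z_p[[T]]$. I would also record how the Lemma above dovetails with this: when $L(E,1)\neq 0$ the Lemma already exhibits $L_\sharp(E,0)$ and $L_\flat(E,0)$ as non-zero integral multiples of $L(E,1)/\Omega_E$, making the conclusion immediate, so the real work of the criterion is felt precisely in the \emph{a priori} possible case $L(E,1)=0$, where the Lemma makes both values vanish at $T=0$ and only the equality of the limiting ratio with $c_p$ rules out one of the two series being identically zero.

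The hard part will not be the algebra but the logical status of the input. The criterion is proved under property $(*)$ and under $\Sha(E/\Q)[p^\infty]<\infty$, and in the rank-zero case neither hypothesis is known unconditionally: rank $0$ does not by itself yield $L(E,1)\neq 0$, so finiteness of $\Sha$ cannot simply be imported from Kolyvagin's theorem. I would therefore state the corollary under the same standing hypotheses as the criterion, and I would take care to interpret $\left.\frac{L_\sharp(E,T)}{L_\flat(E,T)}\right|_{T=0}$ as the value of the quotient of power series, so that the vanishing-to-equal-order phenomenon in the residual case $L(E,1)=0$ is accounted for correctly.
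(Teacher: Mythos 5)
Your proposal is correct and follows the same route as the paper: both read the rank criterion in contrapositive, so that $\rank E(\Q)=0$ forces $\left.\frac{L_\sharp(E,T)}{L_\flat(E,T)}\right|_{T=0}$ to equal the stated constant $c_p$, and both conclude that this equality is incompatible with either series vanishing identically. Two points of comparison. First, your explicit check that $c_p$ is finite and non-zero (writing $-a_p^2+2a_p+p-1=p-(a_p-1)^2$, using $p\mid a_p$ to exclude $a_p=2$ for odd $p$, and running through $a_2\in\{0,\pm2\}$ for $p=2$) is left implicit in the paper but is genuinely needed --- if $c_p$ were $0$ one could not exclude $L_\sharp\equiv0$ --- so this is a worthwhile addition. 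Second, the paper's two-line proof has one ingredient you omit: it first cites \cite[Proposition 6.14]{shuron}, which unconditionally guarantees that at least one of $L_\sharp(E,T)$, $L_\flat(E,T)$ is non-zero, and only then invokes the theorem. You instead dispose of the degenerate case $L_\sharp\equiv L_\flat\equiv 0$ by decreeing that an undefined quotient counts as ``$\neq c_p$''. That reading is defensible, but note that the paper's proof of the theorem only claims to accommodate the possibility that \emph{one} of the two functions vanishes; if both vanished, the quotient in the criterion would be meaningless and the biconditional, as actually proved, would not obviously apply. So to make your argument airtight you should either import \cite[Proposition 6.14]{shuron} as the paper does, or explicitly justify that the criterion is established under the interpretation you adopt; with that small patch your proof is complete. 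Your closing remark that the corollary inherits the theorem's standing hypotheses (property $(*)$ and $\Sha(E/\Q)[p^\infty]<\infty$) is correct and consistent with how the paper states the criterion.
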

\begin{proof}We know that at least one of $L_\sharp(E,T)$ or $L_\flat(E,T)$ is non-zero by \cite[Proposition 6.14]{shuron}, so the theorem tells us they both have to be.
\end{proof}

\section{The behavior at $T=0$: $p$-adic versions of the BSD conjectures}
The goal of this section is to formulate $p$-adic versions of BSD in terms of the vector $(L_\sharp(E,T),L_\flat(E,T))$ in the supersingular case. We thus assume that $p$ is supersingular for this section. 

\subsection{Dieudonn\'{e} modules and $p$-adic heights}
 The Dieudonn\'{e} module is the following two-dimensional $\Q_p$-vector space:
\begin{linenomath}$$D_p(E):=\Q_p\otimes H^1_{dR}(E/\Q)$$\end{linenomath}
There is a Frobenius endomorphism $\phi$ which acts on $D_p(E)$ linearly. We refer the reader to \cite[Paragraph 2]{bpr} for a concrete definition, but let us record that its characteristic polynomial is $Y^2-\frac{a_p}{p}Y+\frac{1}{p}$, as opposed to the definition of \cite{mazursteintate} or \cite{kedlaya} (where it is $Y^2-a_pY+p$). This vector space admits a basis $\omega$ and $\phi(\omega)$, where $\omega$ is the invariant/N\'{e}ron differential of $E$.
We want to define eigenvectors $\nuek:=\nu_{\frac{1}{\alpha}}$ and $\nudo:=\nu_{\frac{1}{\beta}}$ of $\phi$ with eigenvalues $\frac{1}{\alpha}$  and $\frac{1}{\beta}$ which live in the $\Q_p(\alpha)$-vector space
\begin{linenomath}$$D_p(E)(\alpha):=\Q_p(\alpha)\otimes H^1_{dR}(E/\Q).$$\end{linenomath}
\begin{definition}We define (scale) both eigenvectors as follows:
\begin{linenomath}$$\hidari \nuek \\\nudo \migi :=\links -\alpha & p \\ \beta & -p \rechts \frac{1}{\beta-\alpha}\hidari \omega \\ \phi(\omega) \migi$$\end{linenomath}
\end{definition}
\begin{definition}Perrin-Riou's $p$-adic $L$-function can be defined via the classical $p$-adic $L$-functions $L_\alpha:=L_\alpha(E,\alpha,T)$ and $L_\beta:=L_p(E,\beta,T)$:
\begin{linenomath}$$L_p^{PR}(E,T):=(L_\alpha,L_{\beta})\hidari \nuek \\ \nudo \migi$$\end{linenomath}
\end{definition}
This is equivalent via the arguments in \cite[Section 3.5]{steinwuthrich} to Perrin-Riou's construction in \cite[Section 2.2]{perrinriou}.
\begin{lemma}[$D_p(E)$-rationality of coefficients] We have $L_p^{PR}(E,T) \in D_p(E)[[T]]$.
\end{lemma}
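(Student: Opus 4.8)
The plan is to reduce the claim to the factorization of Theorem~\ref{maintheorem} together with one explicit $2\times 2$ computation, the point being that the quadratic irrationalities $\alpha,\beta$ (which a priori only lie in the ramified extension $\Q_p(\alpha)/\Q_p$) enter $L_p^{PR}$ only through the constant matrix $\roots$ sitting at the right end of $\Log_{\alpha,\beta}(1+T)$ and through the eigenvector column $\smat{\nuek \\ \nudo}$, and that these two cancel one another. First I would write $\Log_{\alpha,\beta}(1+T)=M(1+T)\cdot\roots$, where $M(1+T)$ collects the remaining factors of the product, namely the matrices $\smat{a_p & 1 \\ \Phi_{p^n}(1+T) & 0}$ together with $\smat{a_p & 1 \\ p & 0}^{-(N+1)}$. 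Every entry of $M(1+T)$ lies in $\Q_p[[T]]$, since it is assembled solely from the rational integers $a_p,p$ and the cyclotomic polynomials $\Phi_{p^n}(1+T)$; moreover $M$ converges on the open unit disk because $\Log_{\alpha,\beta}$ does and $\roots$ is an invertible constant matrix.

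Next I would substitute $(L_\alpha,L_\beta)=(L_\sharp,L_\flat)\,M(1+T)\,\roots$ into the definition of $L_p^{PR}$ and evaluate the product $\roots\smat{\nuek \\ \nudo}$ using the definition of the eigenvectors. Multiplying $\roots$ into the eigenvector matrix gives
$$\links -1 & -1 \\ \beta & \alpha \rechts\links -\alpha & p \\ \beta & -p \rechts=\links \alpha-\beta & 0 \\ 0 & p(\beta-\alpha)\rechts,$$
so that after the normalizing scalar $\tfrac{1}{\beta-\alpha}$ one finds
$$\roots\hidari \nuek \\ \nudo \migi=\hidari -\omega \\ p\,\phi(\omega)\migi.$$
This is the crux of the argument: the $\alpha$'s and $\beta$'s cancel completely, and the resulting column has entries $-\omega$ and $p\,\phi(\omega)$, both lying in $D_p(E)$ with no extension of scalars.

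Finally I would assemble the pieces, obtaining
$$L_p^{PR}(E,T)=(L_\sharp(E,T),L_\flat(E,T))\,M(1+T)\hidari -\omega \\ p\,\phi(\omega)\migi.$$
Since $L_\sharp,L_\flat\in\Z_p[[T]]$ and $M$ has entries in $\Q_p[[T]]$, the row vector $(L_\sharp,L_\flat)\,M$ has both coordinates in $\Q_p[[T]]$; pairing with $\smat{-\omega \\ p\,\phi(\omega)}$ and using that $\omega,\phi(\omega)$ form a $\Q_p$-basis of $D_p(E)$ shows that every Taylor coefficient lies in $D_p(E)$, i.e. $L_p^{PR}(E,T)\in D_p(E)[[T]]$. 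I expect the only real obstacle to be bookkeeping: one must check that the factor pulled off the right of $\Log_{\alpha,\beta}$ is exactly $\roots$, so that the leftover $M$ is genuinely defined over $\Q_p$, and that the function $L_\alpha=L_\alpha(E,\alpha,T)$ used in the definition of $L_p^{PR}$ coincides with the one appearing in Theorem~\ref{maintheorem}. A conceptually equivalent route would be to observe that swapping $\alpha\leftrightarrow\beta$ interchanges $L_\alpha\leftrightarrow L_\beta$ and $\nuek\leftrightarrow\nudo$ and hence fixes their sum, so that $L_p^{PR}$ is $\Gal(\Q_p(\alpha)/\Q_p)$-invariant and descends from $D_p(E)(\alpha)[[T]]$ to $D_p(E)[[T]]$; but the matrix computation above is more self-contained, so I would take it as the main argument.
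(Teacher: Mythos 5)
Your proof is correct and takes essentially the same route as the paper: both invoke Theorem \ref{maintheorem} and use the fact that the rightmost factor $\roots$ of $\Log_{\alpha,\beta}$ cancels the irrationalities in the eigenvector column, leaving coefficients in $D_p(E)$. The only difference is that you carry out the cancellation $\roots\smat{\nuek \\ \nudo}=\smat{-\omega \\ p\,\phi(\omega)}$ explicitly, whereas the paper compresses this step into the phrase ``From the definition of $\Log_{\alpha,\beta}$'' after rewriting the eigenvector column in the basis $\{\omega,\phi(\omega)\}$.
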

\begin{proof}By Theorem \ref{maintheorem}, we can write
\begin{linenomath}$$L_p^{PR}(E,T)=(L_\sharp, L_\flat)\Log_{\alpha,\beta}\smat{1&\alpha\\1&\beta}^{-1}\smat{1 & 0\\a_p & -p}\smat{\omega \\ \phi(\omega)}.$$\end{linenomath}
From the definition of $\Log_{\alpha,\beta}$, we then see that $L_p^{PR}(E,T)\in D_p(E)[[T]]$, as desired.
\end{proof}
Given a globally minimal Weierstrass equation over $\Z$
\begin{linenomath}$$y^2+a_1xy+a_3y=x^3+a_2x^2+a_4x+a_6$$\end{linenomath} for $E$, recall that the associated N\'{e}ron/invariant differential is $\omega=\frac{dx}{2y+a_1x+a_3}$ (see e.g. \cite[Chapter III.1]{silverman}). The $\Q$-vector space $H^1_{dR}(E/\Q)$ admits a basis $\{\omega,x\omega\}$ and is equipped with a canonical alternating bilinear form $[\cdot,\cdot]$ so that ${[\omega,x\omega]=1}$. We extend it linearly to the Dieudonn\'{e} modules above and denote these extensions by $[\cdot,\cdot]$ as well.

Fix $\omega$. Then for each $\nu\in D_p(E)$ (resp. $\nu\in D_p(E)(\alpha)$), one can associate a quadratic form $h_\nu$ mapping $E(\Q)$ to $\Q_p$ (resp. to $\Q_p(\alpha))$. One can do this (see \cite{bpr}, or \cite{steinwuthrich}) by defining preliminary height functions $h'_{\omega}$ and $h'_{x\omega}$, and then extending linearly, i.e. given $\nu=a\omega+bx\omega$, let $h'_\nu=ah'_\omega+bh'_{x\omega}$. Explicitly, we have $h'_\omega(P)=-\log_{\omega}(P)^2$, where $\log_{\omega}$ is the logarithm associated to $\omega$. The definition of $h'_{x\omega}$ involves the $\sigma$-functions of either Mazur and Tate or of Bernardi. We refer to \cite[Section 4]{steinwuthrich} for an explicit definition, since it won't be needed in this paper. We then normalize and put $h_\nu:=\frac{h'_\nu}{\log_p(\gamma)}$.
\remark The reason for this normalization is that $p$-adic $L$-functions $L_p(T)$ are  classically thought of as functions of a variable $s$ via the substitution $T=\gamma^{s-1}-1$, cf. \cite[\S II.13]{mtt}. The original formulation of $p$-adic BSD then investigated the behavior of a particular $L_p(\gamma^{s-1}-1)$ at $s=1$. Note that \begin{linenomath}$$\frac{d^r}{ds^r}L_p(\gamma^{s-1}-1)|_{s=1}=\left.\frac{d^r}{dT^r}L_p(T)\right|_{T=0}\cdot\log_p(\gamma)^r.$$\end{linenomath}

 The bilinear form associated to this height function has values in $\Q_p$ (resp.$\Q_p(\alpha)$):
 \begin{linenomath}$$\langle P,Q\rangle_\nu=\frac{1}{2}\left(h_\nu(P+Q)-h_\nu(P)-h_\nu(Q)\right)$$\end{linenomath}
\definition Let $\Reg_\nu$ be the discriminant of this height pairing on $E(\Q)/E(\Q)_{tors}$.

\begin{definition}Let $\nuek$ and $\nudo$ be as above. We define normalized height functions
\begin{linenomath}$$\hat{h}_\nuek:=\frac{h_\nuek}{\left[\nudo,\nuek\right]}=\frac{h_\nuek}{\left[\omega,\nuek\right]} \text{ and } \hat{h}_\nudo:=\frac{h_\nudo}{\left[\nuek,\nudo\right]}=\frac{h_\nudo}{\left[\omega,\nudo\right]},$$\end{linenomath}
and denote their corresponding regulators by $\Reg_{\frac{1}{\alpha}}$ and $\Reg_{\frac{1}{\beta}}$. Note that the
height functions (and thus the regulators) are independent of the choice of our Weierstra\ss\text{ }equation.
\end{definition}

Denote by $r(E)$ the rank of $E(\Q)$. In the supersingular case, Perrin-Riou defines the regulator $\Reg_p^{BPR}(E/\Q)$ as the unique element in $D_p(E)$ so that for any $\nu\in D_p(E)$ with $\nu\not\in\Q_p\omega,$ we have \footnote{This characterization is that of \cite[Lemma 4.2]{steinwuthrich}, which is a corrected version of Perrin-Riou's original lemma, \cite[Lemme 2.6]{perrinriou}.}
\begin{equation}\label{universal}
\left[\Reg_p^{BPR}(E/\Q),\nu\right]=\frac{\Reg_\nu}{[\omega,\nu]^{r-1}}\text{ where $r=r(E)>0$}.
\end{equation} For $r(E)=0$, she puts $\Reg_p^{BPR}(E/\Q)=\omega.$

\begin{definition}As an element of $D_p(E)(\alpha)$, define \begin{linenomath}$$\Reg_p(E/\Q):=( \Reg_{\frac{1}{\beta}},\Reg_{\frac{1}{\alpha}})\hidari \nuek \\ \nudo \migi.$$\end{linenomath}
\end{definition}

\begin{proposition}\label{computationofregulator}We have $\Reg_p^{BPR}(E/\Q)=\Reg_p(E/\Q)\in D_p(E)$. 
\end{proposition}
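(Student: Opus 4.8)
The plan is to verify that $\Reg_p(E/\Q)$ satisfies the characterizing property \eqref{universal} of Perrin-Riou's regulator and then invoke the uniqueness that is built into that characterization. The form $[\cdot,\cdot]$ is a non-degenerate alternating pairing on the two-dimensional space $D_p(E)(\alpha)$, and $\{\nuek,\nudo\}$ is a basis of it: the change of basis from $\{\omega,\phi(\omega)\}$ is given by the matrix $\smat{-\alpha & p\\ \beta & -p}$ of determinant $p(\alpha-\beta)\neq0$, since $\alpha\neq\beta$ (the equality $\alpha=\beta$ would force $a_p^2=4p$, impossible for a prime $p$). Consequently any element of $D_p(E)(\alpha)$ is determined by its pairings against $\nuek$ and $\nudo$, so it suffices to check $[\Reg_p(E/\Q),\nuek]=[\Reg_p^{BPR}(E/\Q),\nuek]$ and the same identity with $\nudo$.

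First I would dispose of the case $r=0$, where the height pairing is empty, so $\Reg_{\frac{1}{\alpha}}=\Reg_{\frac{1}{\beta}}=1$; expanding the defining matrix gives $\nuek+\nudo=\omega$, whence $\Reg_p(E/\Q)=\omega=\Reg_p^{BPR}(E/\Q)$ directly. For $r>0$ I would first record two elementary bilinear computations, namely $[\nudo,\nuek]=[\omega,\nuek]$ and $[\nuek,\nudo]=[\omega,\nudo]$, obtained by expanding the definitions of the eigenvectors and using $[\omega,\omega]=0$; these are precisely the identities implicit in the definition of the normalized heights $\hat h_{\nuek}$ and $\hat h_{\nudo}$. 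Next I would track the effect of the normalization on discriminants: since $h_\nu$ is linear in $\nu$, dividing $h_{\nuek}$ by $[\omega,\nuek]$ scales the associated $r\times r$ Gram matrix by $[\omega,\nuek]^{-1}$ and hence its determinant by $[\omega,\nuek]^{-r}$, giving $\Reg_{\frac{1}{\alpha}}=\Reg_{\nuek}/[\omega,\nuek]^{r}$, where $\Reg_{\nuek}$ is the discriminant of the unnormalized pairing attached to $h_{\nuek}$; similarly $\Reg_{\frac{1}{\beta}}=\Reg_{\nudo}/[\omega,\nudo]^{r}$.

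Combining these and using that $[\cdot,\cdot]$ is alternating (so $[\nuek,\nuek]=0$) yields
$$[\Reg_p(E/\Q),\nuek]=\Reg_{\frac{1}{\beta}}[\nuek,\nuek]+\Reg_{\frac{1}{\alpha}}[\nudo,\nuek]=\Reg_{\frac{1}{\alpha}}[\omega,\nuek]=\frac{\Reg_{\nuek}}{[\omega,\nuek]^{r-1}},$$
which is exactly $[\Reg_p^{BPR}(E/\Q),\nuek]$ by \eqref{universal} applied to $\nu=\nuek$ (legitimate because $\nuek\notin\Q_p\omega$). The identical computation with $\nuek$ and $\nudo$ interchanged gives $[\Reg_p(E/\Q),\nudo]=[\Reg_p^{BPR}(E/\Q),\nudo]$, and non-degeneracy then forces $\Reg_p(E/\Q)=\Reg_p^{BPR}(E/\Q)$; since the latter lies in $D_p(E)$ this also establishes the asserted rationality $\Reg_p(E/\Q)\in D_p(E)$. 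I expect the only real point of care to be the bookkeeping of the normalization exponents --- distinguishing the $[\omega,\nuek]^{-r}$ appearing in $\Reg_{\frac{1}{\alpha}}$ from the $[\omega,\nuek]^{-(r-1)}$ demanded by \eqref{universal} --- together with checking $\nuek,\nudo\notin\Q_p\omega$; everything else is formal linear algebra.
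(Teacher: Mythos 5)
Your strategy is in essence the paper's own: check the characterizing property of $\Reg_p^{BPR}(E/\Q)$ against the basis $\{\nu_A,\nu_B\}$ of $D_p(E)(\alpha)$ and conclude by non-degeneracy of $[\cdot,\cdot]$. Your supporting computations are all correct and in places more explicit than the paper's: the identities $[\nu_B,\nu_A]=[\omega,\nu_A]$ and $[\nu_A,\nu_B]=[\omega,\nu_B]$, the scaling law $\Reg_{\frac{1}{\alpha}}=\Reg_{\nu_A}/[\omega,\nu_A]^{r}$ (so that the normalized pairing produces exactly the exponent $r-1$ demanded by equation (\ref{universal})), and the case $r=0$ via $\nu_A+\nu_B=\omega$. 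However, there is one step whose justification as written does not hold up: applying equation (\ref{universal}) to $\nu=\nu_A$. That characterization is stated only for $\nu\in D_p(E)$, i.e.\ for vectors with $\Q_p$-rational coordinates, and $\nu_A$ is not such a vector: since $p$ is supersingular, the Newton polygon of $Y^2-a_pY+p$ shows both roots have valuation $\tfrac12$, so $\alpha\notin\Q_p$; as $\nu_A$ is a $\phi$-eigenvector with eigenvalue $\tfrac1\alpha\notin\Q_p$ and $\phi$ preserves $D_p(E)$, we have $\nu_A\in D_p(E)(\alpha)\setminus D_p(E)$. Your parenthetical check ``legitimate because $\nu_A\notin\Q_p\omega$'' verifies only half of the hypothesis of (\ref{universal}); the half that fails is precisely $\nu\in D_p(E)$.

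The repair needs a genuinely non-formal input: the linearity of $\nu\mapsto\widetilde{\Reg}_\nu=\Reg_\nu/[\omega,\nu]^{r-1}$ in $\nu$. A priori this expression is only homogeneous of degree one (the numerator is homogeneous of degree $r$ in $\nu$, the denominator of degree $r-1$), and homogeneity of degree one is far from linearity; that it \emph{is} linear is the content of Perrin-Riou's lemma, \cite[Lemme 2.6]{perrinriou}, in the corrected form \cite[Lemma 4.2]{steinwuthrich}, which the paper records as a separate remark. Granting that lemma, both sides of (\ref{universal}) are linear in $\nu$ and agree on $D_p(E)\setminus\Q_p\omega$, which spans $D_p(E)(\alpha)$ over $\Q_p(\alpha)$, so they agree at $\nu_A$ and $\nu_B$ and your argument closes. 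It is worth noting that the paper's proof invokes this same linearity in the opposite direction: it extends the verified identity for $\Reg_p(E/\Q)$ from the basis $\{\nu_A,\nu_B\}$ to all of $D_p(E)(\alpha)$, and only then compares with $\Reg_p^{BPR}(E/\Q)$ at the rational vectors $\omega$ and $x\omega$ --- precisely so that (\ref{universal}) is never used outside its stated scope $D_p(E)$. So your proof is salvageable and structurally the mirror image of the paper's, but as submitted it silently assumes the one lemma that makes the $D_p(E)$ versus $D_p(E)(\alpha)$ bookkeeping legitimate.
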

\begin{proof}

When $r(E)=0$, this follows from the fact that $\Reg_{\frac{1}{\alpha}}=1$ and $\Reg_{\frac{1}{\beta}}=1$.

When $r(E)>0$, note that
\begin{linenomath}$$\left[\Reg_p(E/\Q),\nuek\right]=\Reg_{\frac{1}{\alpha}}\cdot \left[\nudo,\nuek\right]=\frac{\Reg_\nuek}{\left[\nudo,\nuek\right]^{r-1}}=\frac{\Reg_\nuek}{\left[\omega,\nuek\right]^{r-1}},$$\end{linenomath}
and similarly, $\left[\Reg_p(E/\Q),\nudo\right]=\frac{\Reg_\nudo}{\left[\omega,\nudo\right]^{r-1}}$. Since $\nuek$ and $\nudo$ form a basis for $D_p(E)(\alpha)$, linearity tells us that the property described in equation (\ref{universal}) holds for any $\nu\in D_p(E)(\alpha)$.

Now suppose that $\Delta:=\Reg_p(E/\Q)- \Reg_p^{BPR}(E/\Q)\neq 0$. Then
\begin{linenomath}$$\left[\Delta,\nu\right]=0 \text{ for any }\nu\in D_p(E),$$\end{linenomath}
so this would in particular hold for $\nu=\omega$ or $\nu=x\omega$, from which we conclude by linearity that
\begin{linenomath}$$\left[\Delta,\nu\right]=0 \text{ for any  }\nu\in D_p(E)(\alpha).$$\end{linenomath}
But this would imply $\Delta=0$. \renewcommand{\qedsymbol}{Q.E.A.}
\end{proof}
 \renewcommand{\qedsymbol}{Q.E.D.}

\subsection{Statement of the conjectures}


The following is a $p$-adic analogue of the Birch and Swinnerton-Dyer conjectures when $p$ is supersingular (cf. \cite[Conjecture on page 229]{bpr} and \cite[Conjecture 2.5]{perrinriou}):
\begin{conjecture}[Bernardi and Perrin-Riou]\label{bernardiandperrinriou} Let $p$ be a good supersingular prime, and denote by $r^{an}$ the order of vanishing of $L_p^{PR}(E,T)$ at $0$, and by $L_p^{PR*}(E)$ its leading coefficient (with value in the Dieudonn\'{e} module) of its Taylor expansion around $0$.
\begin{enumerate}
\item We have $r^{an}=r(E)$.
\item $L_p^{PR*}(E)=\left(1-\phi\right)^2\frac{\prod_v c_v \cdot \#\Sha(E/\Q)}{(\#E(\Q)_{tors})^2}\Reg_p(E/\Q)$.
\end{enumerate}
\end{conjecture}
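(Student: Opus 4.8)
The plan is an Iwasawa-theoretic descent that transfers every assertion about $L_p^{PR}(E,T)$ to the integral pair $(L_\sharp,L_\flat)\in\Z_p[[T]]$ by means of the identity established in the proof of the $D_p(E)$-rationality lemma,
\begin{linenomath}
$$L_p^{PR}(E,T)=(L_\sharp, L_\flat)\,\Log_{\alpha,\beta}(1+T)\smat{1&\alpha\\1&\beta}^{-1}\smat{1 & 0\\a_p & -p}\smat{\omega \\ \phi(\omega)}.$$
\end{linenomath}
The first concrete step is to specialize $\Log_{\alpha,\beta}$ at $T=0$. Since $\Phi_{p^n}(1)=p$ for every $n\geq1$, each factor $\smat{a_p&1\\\Phi_{p^n}(1+T)&0}$ becomes the constant matrix $\smat{a_p&1\\p&0}$ at $T=0$, and the defining product telescopes against the power $\smat{a_p&1\\p&0}^{-(N+1)}$ to the explicit \emph{nonsingular} matrix $\smat{a_p&1\\p&0}^{-2}\smat{-1&-1\\\beta&\alpha}$ (for odd $p$, and similarly for $p=2$). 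Because this limit is invertible, $\ord_{T=0}L_p^{PR}=\min(\ord_{T=0}L_\sharp,\ord_{T=0}L_\flat)$, and the leading coefficient $L_p^{PR*}(E)$ is the corresponding $D_p(E)$-linear combination of the leading coefficients of $L_\sharp$ and $L_\flat$. Thus both parts of the conjecture become intrinsic statements about the pair $(L_\sharp,L_\flat)$ and the $p$-adic arithmetic of $E$.

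The cleanest entry point is the rank-zero case. Here the interpolation formula for $L_\alpha,L_\beta$ gives $L_p^{PR}(E,0)=\frac{L(E,1)}{\Omega_E}(1-\varphi)^2(\nuek+\nudo)$, since $\varphi$ acts by $\tfrac1\alpha,\tfrac1\beta$ on $\nuek,\nudo$ and these eigenvalues reproduce the Euler factors $(1-\tfrac1\alpha)^2,(1-\tfrac1\beta)^2$. As $\alpha,\beta\neq1$ in the supersingular case, $(1-\varphi)^2(\nuek+\nudo)\neq0$, so part (1) in rank zero is equivalent to the classical equivalence $L(E,1)\neq0\iff r(E)=0$, one direction of which is Kolyvagin--Gross--Zagier. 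For part (2) I would invoke Proposition \ref{computationofregulator}: in rank zero $\Reg_{\frac1\alpha}=\Reg_{\frac1\beta}=1$, so $\Reg_p(E/\Q)=\nuek+\nudo$, and the $p$-part of the classical rank-zero BSD formula (available in the supersingular setting through the Iwasawa Main Conjecture and a control argument) supplies $\frac{L(E,1)}{\Omega_E}=\frac{\prod_v c_v\cdot\#\Sha(E/\Q)}{(\#E(\Q)_{tors})^2}$, yielding the conjectured formula on the nose.

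For positive rank I would run the full descent. The two structural inputs are: (i) the supersingular Iwasawa Main Conjecture identifying $(L_\sharp)=\Char\,\Sel^\sharp(E/\Q_\infty)^\vee$ and $(L_\flat)=\Char\,\Sel^\flat(E/\Q_\infty)^\vee$; and (ii) Kobayashi-type control theorems specializing the $\sharp/\flat$-Selmer groups at $T=0$ to the classical $p$-Selmer group of $E/\Q$. Combining these in a Mazur--Tate--Teitelbaum/Perrin-Riou descent produces the arithmetic factor $\frac{\prod_v c_v\cdot\#\Sha(E/\Q)}{(\#E(\Q)_{tors})^2}$ and replaces the complex regulator by the supersingular $p$-adic height regulator. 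The bookkeeping of the normalizations $[\nuek,\cdot]$ and the Euler factor $(1-\varphi)^2$ is arranged (via Proposition \ref{computationofregulator}) so that the outcome is exactly $(1-\varphi)^2\cdot(\text{arithmetic factor})\cdot\Reg_p(E/\Q)$; the order-of-vanishing statement $\min(\ord L_\sharp,\ord L_\flat)=r(E)$ falls out of the same descent once the $p$-adic height pairing is non-degenerate (which also forces $\Reg_p(E/\Q)\neq0$).

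The hard part is twofold and genuinely open. First, the positive-rank descent rests on the supersingular Main Conjecture for arbitrary $a_p$, whereas unconditional results are essentially confined to $a_p=0$. Second, and more seriously, both $r^{an}=r(E)$ and the nonvanishing of $\Reg_p(E/\Q)$ hinge on non-degeneracy of the supersingular $p$-adic height pairing — an analogue of Schneider's conjecture that is open even in the ordinary case. Accordingly, the realistic output of this plan is a proof of the Bernardi--Perrin-Riou conjecture conditional on the relevant Iwasawa Main Conjecture and on non-degeneracy of the $p$-adic height pairing, with the rank-zero statement reduced to classical nonvanishing and the $p$-part of ordinary BSD.
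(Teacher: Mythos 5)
The fundamental problem is that the statement you set out to prove is a \emph{conjecture}: the paper neither proves it nor claims to, and it remains open. What the paper actually establishes about Conjecture \ref{bernardiandperrinriou} is (i) Theorem \ref{belowthisone}, an \emph{equivalence} with the Tandem $p$-adic BSD conjecture, proved by specializing $\Log_{\alpha,\beta}$ at $T=0$ to the invertible matrix $Z=\Log_{\alpha,\beta}(1)$ and comparing coordinates in the basis $\nuek,\nudo$ --- your first paragraph, including the telescoping computation $Z=\smat{a_p & 1\\ p & 0}^{-2}\smat{-1&-1\\ \beta & \alpha}$ for odd $p$, essentially reproduces this equivalence step, which is correct but only transfers the conjecture to another conjecture; and (ii) Kato's theorem, which gives only the one-sided inequality $r^{an}\geq r(E)$. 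Your proposal, as you yourself concede in the final paragraph, is a reduction to open inputs (the supersingular Main Conjecture for general $a_p$, and Schneider-type non-degeneracy of the supersingular $p$-adic height pairing), not a proof; so it cannot be matched against a proof in the paper, and it does not establish the statement.

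Beyond this, even as a conditional program the descent has two concrete gaps. First, the Main Conjecture identifies $(L_\sharp)$ and $(L_\flat)$ with characteristic ideals, which determines leading Taylor coefficients only up to units in $\Z_p^\times$; part (2) of the conjecture is an \emph{exact} identity in $D_p(E)$, so a Kobayashi-style control argument alone cannot produce it ``on the nose'' --- one needs a precise leading-term formula (a Perrin-Riou-type formulaire), itself conjectural here. Second, in the rank-zero case part (2) is an equality of elements of $\Q_p$ whose two sides are rational numbers, and two rationals agree in $\Q_p$ if and only if they agree in $\Q$; hence invoking ``the $p$-part of the classical rank-zero BSD formula'' is insufficient --- you would need the full rational BSD formula $\frac{L(E,1)}{\Omega_E}=\frac{\prod_v c_v\cdot\#\Sha(E/\Q)}{(\#E(\Q)_{tors})^2}$, which is not known. (Your rank-zero computation $L_p^{PR}(E,0)=\frac{L(E,1)}{\Omega_E}(1-\phi)^2\omega$ and the observation that $(1-\phi)^2$ is invertible in the supersingular case are correct, and consistent with $\Reg_p(E/\Q)=\nuek+\nudo=\omega$ from Proposition \ref{computationofregulator}; but the equivalence $r(E)=0\iff L(E,1)\neq 0$ you attribute to Kolyvagin--Gross--Zagier is itself only one-directional as a theorem in this generality.) In short: the honest target here was the paper's Theorem \ref{belowthisone} (the equivalence) or Kato's inequality, both of which your outline touches, rather than the conjecture itself.
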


\remark Note that while the objects in the second part of this conjecture depend on the choice of Weierstra\ss{ }equation, their coordinates with respect to the basis $\{\nu_\alpha,\nu_\beta\}$ don't. In fact, one can formulate Bernardi's and Perrin-Riou's conjecture in a form that resembles more closely that of the one given by Mazur, Tate, and Teitelbaum:

\begin{conjecture}[Equivalent formulation of above]\label{equivalentformulationofabove} Let $r^{an}$ be as in Lemma \ref{ordersofvanishingagree} below, and $L_\alpha^*$ and $L_\beta^*$ be the leading coefficients in the Taylor expansion. Then
\begin{enumerate}
\item $r^{an}=r(E)$.
\item $L_\alpha^*=(1-\frac{1}{\alpha})^2\frac{\prod_v c_v \cdot \#\Sha(E/\Q)\cdot \Reg_{\frac{1}{\beta}}}{(\#E(\Q)_{tors})^2}$, and $L_\beta^*=(1-\frac{1}{\beta})^2\frac{\prod_v c_v \cdot \#\Sha(E/\Q)\cdot \Reg_{\frac{1}{\alpha}}}{(\#E(\Q)_{tors})^2}$.
\end{enumerate}
\end{conjecture}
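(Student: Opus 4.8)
The plan is to prove the equivalence by reading Conjecture~\ref{bernardiandperrinriou} off coordinate-by-coordinate in the eigenbasis $\{\nuek,\nudo\}$ of $D_p(E)(\alpha)$. Recall that $L_p^{PR}(E,T)=L_\alpha\,\nuek+L_\beta\,\nudo$ and that $\{\nuek,\nudo\}$ is a genuine $\Q_p(\alpha)$-basis, since the matrix defining the eigenvectors has determinant $p(\alpha-\beta)\neq0$ in the supersingular case. Hence the coordinates of any element of $D_p(E)(\alpha)[[T]]$ in this basis vanish term by term, and the order of vanishing of the vector $L_p^{PR}(E,T)$ at $T=0$ equals $\min(\ord_T L_\alpha,\ord_T L_\beta)$. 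Invoking Lemma~\ref{ordersofvanishingagree}, which identifies $\ord_T L_\alpha=\ord_T L_\beta$, this common value is the $r^{an}$ appearing in both formulations, so part~(1) of the two conjectures is literally the same assertion $r^{an}=r(E)$.

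For part~(2) I would first rewrite the left-hand side. Since $\ord_T L_\alpha=\ord_T L_\beta=r^{an}$, the coefficient of $T^{r^{an}}$ in $L_p^{PR}(E,T)$ is
$$L_p^{PR*}(E)=L_\alpha^*\,\nuek+L_\beta^*\,\nudo,$$
so the scalar leading coefficients $L_\alpha^*,L_\beta^*$ are exactly the eigenbasis coordinates of $L_p^{PR*}(E)$.

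Next I would expand the right-hand side in the same basis. Using Proposition~\ref{computationofregulator} to write $\Reg_p(E/\Q)=\Reg_{\frac1\beta}\,\nuek+\Reg_{\frac1\alpha}\,\nudo$ and the fact that $\nuek,\nudo$ are $\phi$-eigenvectors with eigenvalues $\tfrac1\alpha,\tfrac1\beta$, I obtain
$$(1-\phi)^2\Reg_p(E/\Q)=\left(1-\tfrac1\alpha\right)^2\Reg_{\frac1\beta}\,\nuek+\left(1-\tfrac1\beta\right)^2\Reg_{\frac1\alpha}\,\nudo.$$
Scaling by $\frac{\prod_v c_v\cdot\#\Sha(E/\Q)}{(\#E(\Q)_{tors})^2}$ and equating the $\nuek$- and $\nudo$-coordinates of the two sides of Conjecture~\ref{bernardiandperrinriou}(2) produces precisely the two scalar identities of Conjecture~\ref{equivalentformulationofabove}(2); conversely, multiplying those two identities by $\nuek$ and $\nudo$ respectively and summing recovers the vector identity. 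Because $\{\nuek,\nudo\}$ is a basis, this coordinatewise passage is an honest ``if and only if,'' giving the equivalence in both directions.

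The main obstacle is not the linear algebra but the input that makes it valid, namely the equality of orders of vanishing $\ord_T L_\alpha=\ord_T L_\beta$ (Lemma~\ref{ordersofvanishingagree}). If these orders differed, the true leading term of $L_p^{PR}(E,T)$ would be supported on a single eigenvector, the expression $L_\alpha^*\nuek+L_\beta^*\nudo$ would fail to be the leading coefficient, and the coordinate dictionary between the two conjectures would collapse. Once the orders are known to agree, everything else is a reversible expansion of $(1-\phi)^2$ on the eigenbasis, so the equivalence follows.
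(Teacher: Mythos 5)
Your argument is correct and is precisely the one the paper intends but leaves implicit: the paper's preceding remark about reading off coordinates with respect to the basis $\{\nuek,\nudo\}$, together with Lemma \ref{ordersofvanishingagree}, amounts exactly to your eigenbasis expansion of $L_p^{PR}$, of $\Reg_p(E/\Q)$, and of $(1-\phi)^2$. Since both directions of your equivalence use only the paper's own ingredients (the definition $L_p^{PR}=L_\alpha\nuek+L_\beta\nudo$, the definition $\Reg_p(E/\Q)=\Reg_{\frac{1}{\beta}}\nuek+\Reg_{\frac{1}{\alpha}}\nudo$, the eigenvector property of $\nuek,\nudo$, and the equality of the orders of vanishing), your write-up is a faithful, correctly justified version of the paper's approach.
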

This version can be found in \cite[Conjecture 0.12]{colmez}, where it is attributed to Mazur, Tate and Teitelbaum.

\begin{lemma}\label{ordersofvanishingagree} Since $p$ is supersingular, $r^{an}:=\ord_{T=0} L_p(E,\alpha,T)=\ord_{T=0} L_p(E,\beta,T)$.
\end{lemma}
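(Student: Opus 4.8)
The plan is to exploit a symmetry hiding in the definition of $\Log_{\alpha,\beta}(1+T)$: every factor except the rightmost one, $\roots$, has entries in $\Q$, so all dependence on the roots $\alpha,\beta$ is concentrated in that one constant matrix. I would first record the factorization
$$\Log_{\alpha,\beta}(1+T)=B(1+T)\cdot\roots,\qquad B(1+T):=\lim_{n\rightarrow\infty}\smat{a_p & 1\\ \Phi_{p^1}(1+T) & 0}\cdots\smat{a_p & 1\\ \Phi_{p^n}(1+T) & 0}\smat{a_p & 1\\ p & 0}^{-(N+1)}.$$
Since $\det\smat{a_p & 1\\ p & 0}=-p$, each matrix occurring in $B(1+T)$ has entries in $\Z[1/p][T]$; granting the convergence furnished by Theorem \ref{maintheorem} together with the fact that $\Q_p$ is the closure of $\Q$ in $\C_p$, the limit $B(1+T)$ is then a matrix with entries in $\Q_p[[T]]$.

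Next I would substitute this into the relation of Theorem \ref{maintheorem}. Setting $(f,g):=(L_\sharp(E,T),L_\flat(E,T))\,B(1+T)$, the integrality $L_\sharp(E,T),L_\flat(E,T)\in\Z_p[[T]]$ forces $f,g\in\Q_p[[T]]$, and
$$(L_\alpha(E,T),L_\beta(E,T))=(f,g)\roots,\quad\text{i.e.}\quad L_\alpha(E,T)=-f+\beta g,\ \ L_\beta(E,T)=-f+\alpha g.$$
Here supersingularity enters decisively: $p\mid a_p$ makes the Newton polygon of $Y^2-a_pY+p$ a single segment of slope $-\tfrac12$, so this polynomial is irreducible over $\Q_p$ and $\Q_p(\alpha)=\Q_p(\beta)$ is a ramified quadratic extension carrying the nontrivial automorphism $\sigma$ interchanging $\alpha$ and $\beta$. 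Applying $\sigma$ coefficientwise and using that $\sigma$ fixes the coefficients of $f$ and $g$, I obtain $\sigma(L_\alpha(E,T))=-f+\alpha g=L_\beta(E,T)$.

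To conclude, I would note that a field automorphism sends a coefficient to $0$ exactly when that coefficient already vanishes, so $\sigma$ preserves the order of vanishing at $T=0$; hence $\ord_{T=0}L_\beta(E,T)=\ord_{T=0}\sigma(L_\alpha(E,T))=\ord_{T=0}L_\alpha(E,T)$, which is the assertion. This reasoning is indifferent to whether one of the two functions vanishes identically (then both orders are $\infty$) and to the value of $N$, so the case $p=2$ requires no separate treatment. I expect the only delicate point to be the first step, namely verifying that the normalized infinite product $B(1+T)$ genuinely converges with coefficients in $\Q_p[[T]]$ rather than merely in $\C_p[[T]]$, so that $f$ and $g$ are indeed $\sigma$-fixed; once this rationality is secured, the remainder is formal.
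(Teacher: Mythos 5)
Your proof is correct, and its driving mechanism is the same one behind the proof the paper invokes (Lemma 6.6 of \cite{pollack}): the nontrivial automorphism $\sigma$ of $\Q_p(\alpha)/\Q_p$ interchanges $\alpha$ and $\beta$ (irreducibility of $Y^2-a_pY+p$ follows from $p\mid a_p$ exactly as you argue), it carries $L_\alpha(E,T)$ to $L_\beta(E,T)$ coefficientwise, and a field automorphism preserves vanishing of coefficients, hence orders of vanishing. Where you genuinely differ is in how the identity $\sigma\left(L_\alpha(E,T)\right)=L_\beta(E,T)$ is obtained. Pollack's cited argument gets it from the construction of the classical $p$-adic $L$-functions themselves: the modular symbols entering the interpolation are rational, so all dependence on the chosen root is through powers of that root, and conjugating coefficients swaps the two functions; the paper's one-line proof asserts that this argument generalizes. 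You instead deduce the identity from Theorem \ref{maintheorem}, concentrating the $(\alpha,\beta)$-dependence of $\Log_{\alpha,\beta}(1+T)$ in the constant right-hand factor $\roots$ and using $L_\sharp,L_\flat\in\Z_p[[T]]$ together with the $\Q_p$-rationality of your $B(1+T)$ (your appeal to $\Q_p$ being closed in $\C_p$ is the right justification for that rationality, and there is no circularity, since Theorem \ref{maintheorem} is quoted from prior work independent of this lemma). Your route buys self-containedness relative to results already stated in the paper, and it works verbatim for all supersingular $p$ and $a_p$, including $p=2$ --- precisely the generality in which the paper needs the lemma but for which Pollack's lemma (stated for $a_p=0$) is only claimed to extend. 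The cost is that you invoke a much heavier input, namely the existence of the integral pair $(L_\sharp,L_\flat)$ and the convergence of $\Log_{\alpha,\beta}$, whereas the conjugation identity is already visible at the level of the construction of $L_\alpha$ and $L_\beta$.
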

\proof The same proof as in \cite[Lemma 6.6]{pollack} works.

\subsection{A version of the conjectures via $L_\sharp$ and $L_\flat$ in the supersingular case}
We now reformulate Conjecture \ref{bernardiandperrinriou} using $L_\sharp$ and $L_\flat$. \begin{definition}\label{lvector} We call $\overrightarrow{L}_p(E,T):=\overrightarrow{L}_p:={(L_\sharp,L_\flat)}$ the \textit{$p$-adic $L$-vector of $E$}, and denote by $r_p^\natural$ the minimum of the orders of vanishing of $L_\sharp$ and $L_\flat$. \end{definition}

We would now like to find a pair of elements $\nu_\sharp$ and $\nu_\flat$ in $D_p(E)$ that give rise to regulators corresponding to our $p$-adic $L$-functions. Recall that $L_p^{PR}(E,T) = (L_\sharp, L_\flat) \Log_{\alpha,\beta}\hidari\nuek \\ \nudo \migi $.

\begin{definition} Let $Z:=\left.\Log_{\alpha,\beta}(1+T)\right|_{T=0}=\Log_{\alpha,\beta}(1)$. 
We define
\begin{linenomath}$$\hidari \nu_\sharp\\ \nu_\flat \migi :=Z\hidari\nuek \\ \nudo \migi,$$\end{linenomath}
\begin{linenomath}$$(N_\sharp, N_\flat) :=(\nu_B,-\nu_A)\smat{(1-\frac{1}{\alpha})^2 & 0 \\ 0 & (1-\frac{1}{\beta})^2}Z^{-1}\times \det Z.$$\end{linenomath}
\end{definition}

\begin{lemma}\label{nonlinearity}$\nu_\sharp, \nu_\flat, N_\sharp, N_\flat$ are in $D_p(E)$ and are not $\Q_p$-multiples of $\omega$.
\end{lemma}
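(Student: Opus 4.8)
The plan is to separate the two assertions. For membership in $D_p(E)$, I would first dispatch $\nu_\sharp,\nu_\flat$ by the mechanism already used for the $D_p(E)$-rationality lemma. From the defining formula for the eigenvectors one reads off $\omega=\nuek+\nudo$ and $p\phi(\omega)=\beta\nuek+\alpha\nudo$, hence $\smat{-1&-1\\\beta&\alpha}\smat{\nuek\\\nudo}=\smat{-\omega\\p\phi(\omega)}$. Writing $\Log_{\alpha,\beta}(1+T)=M(T)\smat{-1&-1\\\beta&\alpha}$, where $M(T)$ collects all the remaining factors and so has entries in $\Q_p$, I obtain $\smat{\nu_\sharp\\\nu_\flat}=Z\smat{\nuek\\\nudo}=M(0)\smat{-\omega\\p\phi(\omega)}$, which lies in $D_p(E)$ because $M(0)$ has entries in $\Q_p$ and $\omega,\phi(\omega)\in D_p(E)$.

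For $N_\sharp,N_\flat$ I would instead use Galois descent, which is cleaner than inverting $Z$ by hand. Put $G=\Gal(\Q_p(\alpha)/\Q_p)$ (there is nothing to prove when $\alpha\in\Q_p$), so that $D_p(E)=D_p(E)(\alpha)^{G}$; it suffices to check that the nontrivial $\sigma\in G$ fixes $N_\sharp,N_\flat$. Since $\sigma$ interchanges $\alpha,\beta$ and hence $\nuek,\nudo$, and since the only $\alpha,\beta$-dependence of $\Log_{\alpha,\beta}$ sits in the rightmost factor $\smat{-1&-1\\\beta&\alpha}$ whose columns $\sigma$ swaps, one gets $Z^\sigma=ZS$ with $S=\smat{0&1\\1&0}$. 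Then $\sigma$ sends $(\nudo,-\nuek)$ to $(\nudo,-\nuek)(-S)$, sends $\smat{(1-1/\alpha)^2&0\\0&(1-1/\beta)^2}$ to its $S$-conjugate, sends $Z^{-1}$ to $SZ^{-1}$, and sends $\det Z$ to $-\det Z$; multiplying out, the copies of $S$ together with the two sign changes cancel, giving $(N_\sharp,N_\flat)^\sigma=(N_\sharp,N_\flat)$.

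For the non-degeneracy I would exploit that $[\cdot,\cdot]$ is alternating and non-degenerate with $[\omega,\omega]=0$, so that a vector $\nu\in D_p(E)$ is a $\Q_p$-multiple of $\omega$ if and only if $[\omega,\nu]=0$. From $\omega=\nuek+\nudo$ and $\delta:=[\nuek,\nudo]\neq0$ one finds $[\omega,\nuek]=-\delta$ and $[\omega,\nudo]=\delta$, so that any vector written as $c_A\nuek+c_B\nudo$ over $\Q_p(\alpha)$ satisfies $[\omega,\cdot]=\delta(c_B-c_A)$. Thus the four claims reduce to four inequalities $c_A\neq c_B$: in terms of the entries $Z_{ij}$ of $Z$ and of $d_\alpha=(1-1/\alpha)^2$, $d_\beta=(1-1/\beta)^2$, these are $Z_{11}\neq Z_{12}$ and $Z_{21}\neq Z_{22}$ for $\nu_\sharp,\nu_\flat$, and $d_\beta Z_{21}\neq d_\alpha Z_{22}$ and $d_\beta Z_{11}\neq d_\alpha Z_{12}$ for $N_\sharp,N_\flat$.

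The main obstacle is verifying these inequalities, and this is where I expect supersingularity to be essential. One evaluates $Z=\Log_{\alpha,\beta}(1)$ explicitly: since $\Phi_{p^k}(1)=p$, the product collapses and $Z$ becomes a fixed power of $\smat{a_p&1\\p&0}$ applied to $\smat{-1&-1\\\beta&\alpha}$, after which each $Z_{ij}$ is an explicit expression in $a_p,p,\alpha,\beta$. Checking that the required differences do not vanish — using $p\mid a_p$ together with the Hasse bound $|a_p|\leq 2\sqrt p$ to reduce to finitely many small residual cases — is the delicate step. I would expect particular care to be needed at the value $a_p=0$, where one must confirm that the precise normalization built into $\Log_{\alpha,\beta}$ at $T=0$ genuinely keeps all four coordinates off the line $\Q_p\omega$.
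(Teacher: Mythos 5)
Your structural work is sound, and it is a fair unpacking of what the paper compresses into the single word ``Calculation'': the identity $\roots\smat{\nuek\\ \nudo}=\smat{-\omega\\ p\phi(\omega)}$ does place $\nu_\sharp,\nu_\flat$ in $D_p(E)$, your Galois-descent argument for $N_\sharp,N_\flat$ is correct (the copies of $S=\smat{0&1\\1&0}$ and the two signs do cancel), and the criterion ``$c_A\nuek+c_B\nudo$ is a $\Q_p$-multiple of $\omega$ iff $c_A=c_B$'' correctly converts the second assertion into your four inequalities on the entries of $Z$. You also correctly observe that at $T=0$ every $\Phi_{p^k}(1)=p$, so the limit collapses: $Z=C^{-2}\roots$ for odd $p$ and $Z=C^{-3}\roots$ for $p=2$, where $C=\smat{a_p&1\\p&0}$. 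But you stop exactly where the content of the lemma lies: none of the four inequalities is verified, so the proposal is a plan rather than a proof.

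Moreover, the deferred step cannot be completed, because one of your inequalities is false in the main case. For odd $p$ one computes
$$Z_{11}-Z_{12}=\frac{a_p(\alpha-\beta)}{p^2},$$
so $Z_{11}\neq Z_{12}$ holds if and only if $a_p\neq 0$. But $a_p=0$ is the generic supersingular situation (forced by the Hasse bound for every supersingular $p\geq 5$): there $C^2=pI$, hence $Z=\tfrac1p\roots$ and
$$\nu_\sharp=-\tfrac{1}{p}(\nuek+\nudo)=-\tfrac{\omega}{p},$$
a $\Q_p$-multiple of $\omega$. (For $p=2$ with $a_2=0$ the same collapse occurs in the bottom row of $Z=C^{-3}\roots$, and it is $\nu_\flat=-\omega/p$ instead.) So your closing suspicion about $a_p=0$ was exactly right, but no ``normalization built into $\Log_{\alpha,\beta}$'' rescues the claim: the statement about $\nu_\sharp$ (resp. $\nu_\flat$ when $p=2$) is simply false there, and no completion of your argument can prove the lemma as written. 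What your framework does establish --- and this is all the paper actually uses, namely to define $\Reg_\sharp,\Reg_\flat$ and in the proof of part 2 of Theorem \ref{belowthisone} --- are the assertions about $N_\sharp$ and $N_\flat$: at $a_p=0$ your two inequalities become $(1-\tfrac1\beta)^2\beta\neq(1-\tfrac1\alpha)^2\alpha$ and $(1-\tfrac1\beta)^2\neq(1-\tfrac1\alpha)^2$, which reduce to $(\beta-\alpha)(1-\tfrac1p)\neq 0$ and $(2-\tfrac{a_p}{p})\tfrac{\alpha-\beta}{p}\neq 0$, both true (the latter since $a_p\neq 2p$ by Hasse); likewise the claim about $\nu_\flat$ for odd $p$ survives, since $Z_{21}-Z_{22}=(\beta-\alpha)(p+a_p^2)/p^2\neq 0$. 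A correct write-up should therefore prove exactly these statements and flag that $\nu_\sharp$ (odd $p$, $a_p=0$) genuinely lands on the line $\Q_p\omega$.
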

\proof Calculation.

\begin{definition} We let $\Reg_\sharp:=\Reg_{\frac{N_\sharp}{[\omega,N_\sharp]}}$ and  $\Reg_\flat:=\Reg_{\frac{N_\flat}{[\omega,N_\flat]}}$ be the regulators for the normalized heights associated to $N_\sharp$ and $N_\flat$. Also, we let
\begin{linenomath}$$\Reg_p^\natural:=
\begin{cases}
\left(
\begin{array}{cc} (-a_p^2+2a_p+p-1)\Reg_\sharp,& (-a_p+2)\Reg_\flat \end{array}\right)
 & \text{ for odd $p$,}\\
  \left(\begin{array}{cc}(-a_p^3+2a_p^2+2pa_p-a_p-2p)\Reg_\sharp,&  (-a_p^2+2a_p+p-1)\Reg_\flat\end{array}\right)
 & \text{ for even $p$.}
\end{cases}
$$\end{linenomath}

\end{definition}

We are now ready to give our $p$-adic version of BSD:

\begin{conjecture}[Tandem $p$-adic BSD]\label{tandempadicbsd}Let $E$ be an elliptic curve and $p$ a prime of good supersingular reduction.
Denote by $\overrightarrow{L}_p^*$ the first non-zero leading Taylor coefficient around $T=0$ of $\overrightarrow{L}_p=\overrightarrow{L}_p(E,T)$.
\begin{enumerate}
\item We have $r_p^\natural=r(E)$.
\item $\overrightarrow{L}_p^*=\frac{\prod_v c_v \cdot \#\Sha(E/\Q)}{(\#E(\Q)_{tors})^2} \Reg_p^\natural(E/\Q)$
\end{enumerate}\end{conjecture}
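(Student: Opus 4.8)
The plan is to establish both parts of Conjecture \ref{tandempadicbsd} by deriving them from the Bernardi--Perrin-Riou conjecture, Conjecture \ref{bernardiandperrinriou} (equivalently its componentwise form, Conjecture \ref{equivalentformulationofabove}), which is the standard conjectural framework in the supersingular case. The bridge between the two pictures is the factorization $(L_\alpha,L_\beta)=(L_\sharp,L_\flat)\Log_{\alpha,\beta}(1+T)$ of Theorem \ref{maintheorem}, together with the fact that the matrix $Z=\Log_{\alpha,\beta}(1)$ is invertible at $T=0$.

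For part (1), I would write $\overrightarrow{L}_p(E,T)=T^m\overrightarrow{L}_p^*+O(T^{m+1})$ with $m=r_p^\natural$ and $\overrightarrow{L}_p^*\neq0$. Since $\Log_{\alpha,\beta}(1+T)=Z+O(T)$ with $Z$ invertible, multiplying gives $(L_\alpha,L_\beta)=T^m(\overrightarrow{L}_p^* Z)+O(T^{m+1})$ with $\overrightarrow{L}_p^* Z\neq0$; hence $\min(\ord L_\alpha,\ord L_\beta)=m=r_p^\natural$. By Lemma \ref{ordersofvanishingagree} the two classical orders coincide, so this minimum equals $r^{an}$, and part (1) of Conjecture \ref{bernardiandperrinriou} gives $r^{an}=r(E)$. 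Thus $r_p^\natural=r(E)$.

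For part (2), reading off the order-$m$ coefficients in the same factorization yields the clean relation $\overrightarrow{L}_p^*=(L_\alpha^*,L_\beta^*)Z^{-1}$, where both $L_\alpha^*,L_\beta^*$ are genuine leading coefficients by Lemma \ref{ordersofvanishingagree}. Substituting the values from Conjecture \ref{equivalentformulationofabove}(2), and writing $C=\frac{\prod_v c_v\cdot\#\Sha(E/\Q)}{(\#E(\Q)_{tors})^2}$ and $D=\mathrm{diag}((1-\frac1\alpha)^2,(1-\frac1\beta)^2)$, one obtains $\overrightarrow{L}_p^*=C\,(\Reg_{\frac{1}{\beta}},\Reg_{\frac{1}{\alpha}})\,D\,Z^{-1}$. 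It then remains to verify the purely algebraic regulator identity $(\Reg_{\frac{1}{\beta}},\Reg_{\frac{1}{\alpha}})\,D\,Z^{-1}=\Reg_p^\natural$, which I would prove componentwise. When $r(E)>0$, Proposition \ref{computationofregulator} and the linearity of $\nu\mapsto\Reg_\nu/[\omega,\nu]^{r-1}$ built into the defining property (\ref{universal}) let me rewrite $\Reg_\sharp=[\Reg_p^{BPR}(E/\Q),N_\sharp]/[\omega,N_\sharp]$ (and similarly for $\Reg_\flat$). Expanding $\Reg_p^{BPR}(E/\Q)=\Reg_{\frac{1}{\beta}}\nuek+\Reg_{\frac{1}{\alpha}}\nudo$, reading off the coordinates of $N_\sharp,N_\flat$ from their definition, and using the relation $\omega=\nuek+\nudo$, a short computation with the alternating form shows that the first component of $(\Reg_{\frac{1}{\beta}},\Reg_{\frac{1}{\alpha}})DZ^{-1}$ equals $s_\sharp\Reg_\sharp$, where after cancelling a common factor the scalar $s_\sharp$ is forced to be the first entry of $((1-\frac1\alpha)^2,(1-\frac1\beta)^2)Z^{-1}$. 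The decisive point is that this is exactly the rank-zero value $(L_\sharp(E,0),L_\flat(E,0))/(L(E,1)/\Omega_E)$ computed in the Lemma of Section 2, namely the coefficient $-a_p^2+2a_p+p-1$ (resp. $-a_p+2$) appearing in $\Reg_p^\natural$; the same computation with $N_\flat$ produces $s_\flat\Reg_\flat$, and the $p=2$ case runs identically with the even-$p$ scalars. The rank-zero case is handled directly: there $\Reg_\sharp=\Reg_\flat=1$ and $\Reg_p^\natural=(s_\sharp,s_\flat)$, so the claim reduces to $\overrightarrow{L}_p^*=(s_\sharp,s_\flat)C$, which is the classical rank-zero BSD formula combined with the $T=0$ value lemma.

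The main obstacle is the regulator identity in part (2). The difficulty is not the linear algebra itself but keeping three facts in sync: the homogeneity-degree drop that makes $\Reg_\nu/[\omega,\nu]^{r-1}$ linear in $\nu$ (Perrin-Riou's lemma, used through (\ref{universal}) and Proposition \ref{computationofregulator}); the $\Q_p$-rationality of $\nu_\sharp,\nu_\flat,N_\sharp,N_\flat$ and of $\Reg_\sharp,\Reg_\flat$ (Lemma \ref{nonlinearity}), which is what guarantees that the conjugation $\alpha\leftrightarrow\beta$ sends the $\Q_p(\alpha)$-valued eigenvector data to a $\Q_p$-valued answer; and the identification of the normalizing scalars $s_\sharp,s_\flat$ with the rank-zero special values. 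Checking that the factor $\det Z$ in the definition of $N_\sharp,N_\flat$ drops out of the scale-invariant regulators $\Reg_\sharp,\Reg_\flat$ while remaining essential for rationality is exactly the sort of bookkeeping where sign and normalization errors creep in, and is where I would spend the most care.
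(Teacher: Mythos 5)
Your proposal is correct and takes essentially the same route as the paper: since the statement is a conjecture, the paper ``proves'' it only by establishing equivalence with Conjecture \ref{bernardiandperrinriou} (Theorem \ref{belowthisone}), and your derivation mirrors that proof exactly --- the factorization through $Z=\Log_{\alpha,\beta}(1)$ with Lemma \ref{ordersofvanishingagree} and the product rule for part (1), and for part (2) the relation $\overrightarrow{L}_p^*=(L_\alpha^*,L_\beta^*)Z^{-1}$ combined with the linearity of $\nu\mapsto\Reg_\nu/[\omega,\nu]^{r-1}$ applied to $N_\sharp,N_\flat$ (via Lemma \ref{nonlinearity} and the relation $\omega=\nuek+\nudo$), with the rank-zero case handled separately. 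Your identification of the normalizing scalars with the first entries of $\left((1-\tfrac{1}{\alpha})^2,(1-\tfrac{1}{\beta})^2\right)Z^{-1}$, and hence with the $T=0$ values $(-a_p^2+2a_p+p-1,\,-a_p+2)$ (and their even-$p$ analogues) from the Lemma in Section 2, is precisely the ``explicit calculation of the factors preceding the regulators'' that the paper's proof leaves implicit.
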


\remark The term $\Reg_p^\natural(E/\Q)$ is independent from the choice of Weierstra\ss{ }equation. This follows from the proof of part 2 of Theorem \ref{belowthisone}, which only compares \textit{coordinates} with respect to the basis $\nu_\alpha,\nu_\beta$.

\begin{theorem}\label{belowthisone}This conjecture is equivalent to that of Bernardi and Perrin-Riou (i.e. Conjecture \ref{bernardiandperrinriou}).
\end{theorem}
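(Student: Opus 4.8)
The plan is to compare the two conjectures coordinate by coordinate, using the factorization of Theorem~\ref{maintheorem} as a dictionary and the value $Z=\Log_{\alpha,\beta}(1)$ as the hinge. First I would record that $Z$ is invertible: since $\Phi_{p^k}(1)=p$ for every $k\geq1$, at $T=0$ each factor $\smat{a_p&1\\\Phi_{p^k}(1+T)&0}$ collapses to $\smat{a_p&1\\p&0}$, the partial products telescope against the power $\smat{a_p&1\\p&0}^{-(N+1)}$, and $Z$ becomes a fixed power of $\smat{a_p&1\\p&0}$ times $\smat{-1&-1\\\beta&\alpha}$; as $\det\smat{a_p&1\\p&0}=-p$ and $\det\smat{-1&-1\\\beta&\alpha}=\beta-\alpha\neq0$, we get $\det Z\neq0$. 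Since $L_p^{PR}(E,T)=(L_\alpha,L_\beta)\hidari \nuek \\ \nudo \migi$ and $(L_\alpha,L_\beta)=\overrightarrow{L}_p\,\Log_{\alpha,\beta}(1+T)$, we have $L_p^{PR}(E,T)=\overrightarrow{L}_p\,\Log_{\alpha,\beta}(1+T)\hidari \nuek \\ \nudo \migi$.

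For part (1): as $\nuek,\nudo$ are independent, $\ord_{T=0}L_p^{PR}=\min(\ord_{T=0}L_\alpha,\ord_{T=0}L_\beta)=r^{an}$ by Lemma~\ref{ordersofvanishingagree}. Invertibility of $Z$ makes $\Log_{\alpha,\beta}(1+T)$ an invertible matrix of analytic functions near $T=0$, so $\overrightarrow{L}_p=(L_\alpha,L_\beta)\Log_{\alpha,\beta}(1+T)^{-1}$ has the same minimal order of vanishing as $(L_\alpha,L_\beta)$; hence $r_p^\natural=r^{an}$ and the two statements (1) coincide verbatim. Reading off the leading term through the invertible $Z$ yields $L_p^{PR*}(E)=\overrightarrow{L}_p^{*}\,Z\hidari \nuek \\ \nudo \migi=L_\sharp^{*}\nu_\sharp+L_\flat^{*}\nu_\flat$, where I used $\hidari \nu_\sharp \\ \nu_\flat \migi=Z\hidari \nuek \\ \nudo \migi$.

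For part (2), I would expand the right-hand side of Conjecture~\ref{bernardiandperrinriou} in the eigenbasis. Since $\phi\nuek=\tfrac1\alpha\nuek$ and $\phi\nudo=\tfrac1\beta\nudo$, one has $(1-\phi)^2\Reg_p(E/\Q)=\bigl((1-\tfrac1\alpha)^2\Reg_{\frac{1}{\beta}},\,(1-\tfrac1\beta)^2\Reg_{\frac{1}{\alpha}}\bigr)\hidari \nuek \\ \nudo \migi$. Comparing this with $L_p^{PR*}(E)=\overrightarrow{L}_p^{*}Z\hidari \nuek \\ \nudo \migi$ and using that $\{\nuek,\nudo\}$ is a basis, statement (2) of Bernardi--Perrin-Riou becomes the coordinate identity $\overrightarrow{L}_p^{*}=\frac{\prod_v c_v\cdot\#\Sha(E/\Q)}{(\#E(\Q)_\tors)^2}\bigl((1-\tfrac1\alpha)^2\Reg_{\frac{1}{\beta}},\,(1-\tfrac1\beta)^2\Reg_{\frac{1}{\alpha}}\bigr)Z^{-1}$. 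Thus the two versions of (2) are equivalent exactly when the purely arithmetic identity
\[
\Reg_p^\natural=\bigl((1-\tfrac1\alpha)^2\Reg_{\frac{1}{\beta}},\,(1-\tfrac1\beta)^2\Reg_{\frac{1}{\alpha}}\bigr)Z^{-1}
\]
holds; call it $(\dagger)$.

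It remains to establish $(\dagger)$. When $r(E)>0$, I would pair both sides with $N_\sharp$ and $N_\flat$, which is legitimate because these are not $\Q_p$-multiples of $\omega$ (Lemma~\ref{nonlinearity}). The scaling relation $\Reg_{\lambda\nu}=\lambda^{r}\Reg_\nu$ together with the universal property of Proposition~\ref{computationofregulator} and equation~(\ref{universal}) gives $\Reg_\sharp=[\Reg_p(E/\Q),N_\sharp]/[\omega,N_\sharp]$; a short computation using the symplectic relations $[\omega,\nuek]=-[\nuek,\nudo]$ and $[\omega,\nudo]=[\nuek,\nudo]$ then identifies the $\sharp$-coordinate of the right-hand side of $(\dagger)$ with $\frac{[\omega,N_\sharp]}{[\nuek,\nudo]\det Z}\Reg_\sharp$. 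By the very shape of $N_\sharp$, the scalar $\frac{[\omega,N_\sharp]}{[\nuek,\nudo]\det Z}$ is the $\sharp$-entry of $\bigl((1-\tfrac1\alpha)^2,(1-\tfrac1\beta)^2\bigr)Z^{-1}$, which equals $-a_p^2+2a_p+p-1$ for odd $p$: indeed, comparing the interpolation values $L_\alpha(E,0)=(1-\tfrac1\alpha)^2\tfrac{L(E,1)}{\Omega_E}$ and $L_\beta(E,0)=(1-\tfrac1\beta)^2\tfrac{L(E,1)}{\Omega_E}$ with the lemma computing $\bigl(L_\sharp(E,0),L_\flat(E,0)\bigr)$ forces $\bigl((1-\tfrac1\alpha)^2,(1-\tfrac1\beta)^2\bigr)Z^{-1}=(-a_p^2+2a_p+p-1,\,-a_p+2)$. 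This is exactly the coefficient of $\Reg_\sharp$ in $\Reg_p^\natural$, so the $\sharp$-coordinates match; the $\flat$-coordinate and the case $p=2$ (with its cubic constants) are handled identically, and for $r(E)=0$ all regulators equal $1$ so $(\dagger)$ reduces to the same $T=0$ identity. I expect the main obstacle to be precisely this last bookkeeping---verifying that the numerical coefficients built into $\Reg_p^\natural$ are exactly the conversion factors produced by $(1-\phi)^2$ and $Z^{-1}$, i.e.\ that $N_\sharp,N_\flat$ carry the correct normalization; everything else is formal linear algebra over $D_p(E)(\alpha)$.
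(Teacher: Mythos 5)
Your proposal is correct and takes essentially the same route as the paper: part (1) via Lemma \ref{ordersofvanishingagree} and the product rule, and part (2) by exactly the linear algebra the paper uses (the universal property of the Bernardi--Perrin-Riou regulator, linearity of $\widetilde{\Reg}_\nu$ in $\nu$, the symplectic relations, and the definitions of $N_\sharp,N_\flat$), ending in the same explicit check of the numerical factors in front of $\Reg_\sharp$ and $\Reg_\flat$. One small repair: your identification $\bigl((1-\tfrac{1}{\alpha})^2,(1-\tfrac{1}{\beta})^2\bigr)Z^{-1}=(-a_p^2+2a_p+p-1,\,-a_p+2)$ by cancelling $\tfrac{L(E,1)}{\Omega_E}$ against the lemma on $\bigl(L_\sharp(E,0),L_\flat(E,0)\bigr)$ presupposes $L(E,1)\neq0$; since both sides depend only on $a_p$ and $p$, it should instead be verified directly from $Z=\smat{a_p & 1\\ p & 0}^{-2}\smat{-1 & -1\\ \beta & \alpha}$ for odd $p$ (the telescoping you already noted), which is precisely the paper's concluding ``explicit calculation of the factors preceding the regulators.''
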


\begin{definition}Let $r=r(E)>0$. Given a vector $\nu\in D_p(E)$ (or in $D_p(E)(\alpha))$ that is not a linear multiple of $\omega$, we put \begin{linenomath}$$\widetilde{\Reg}_{\nu}:=\frac{\Reg_\nu}{[\omega,\nu]^{r-1}}.$$\end{linenomath}
\end{definition}
\remark We know that $\widetilde{\Reg}_{\nu}$ is linear in $\nu$. See e.g. \cite[proof of Lemma 4.2]{steinwuthrich}.

\begin{proof}[Proof of equivalence for part $1$] This follows from Lemma \ref{ordersofvanishingagree} and the product rule.
\end{proof}

\begin{proof}[Proof for part $2$]From the equivalence of part $1$ and the product rule, we have \begin{linenomath}$$\overrightarrow{L}_p^*\Log_{\alpha,\beta}(1)=\overrightarrow{L}_p^*Z=(L_{\alpha}^*,{L_{\beta}^*}).$$\end{linenomath} But we also have, for $r>0$, \begin{linenomath}$$(1-\phi)^2(\Reg_{\frac{1}{\beta}},\Reg_{\frac{1}{\alpha}})\hidari\nu_A \\ \nu_B\migi=\left(\frac{\widetilde{\Reg}_{\nu_B}}{[\omega,\nu_B]},\frac{\widetilde{\Reg}_{\nu_A}}{[\omega,\nu_A]}\right)\smat{ (1-\frac{1}{\alpha})^2 & 0 \\ 0 & (1-\frac{1}{\beta})^2}Z^{-1}\hidari \nu_\sharp \\ \nu_\flat \migi.$$\end{linenomath}
Since $[\omega,\nu_B]=[\nu_A,\nu_B]=-[\omega,\nu_A]$ and $\widetilde{\Reg}_{\nu}$ is linear in $\nu$, and by Lemma \ref{nonlinearity}, this is equal to
\begin{linenomath}$$\frac{1}{\det Z} \left(\frac{1}{[\nu_A,\nu_B]}\widetilde{\Reg}_{N_\sharp},
\frac{-1}{[\nu_A,\nu_B]}\widetilde{\Reg}_{N_\flat}\right)\hidari \nu_\sharp \\ \nu_\flat \migi.$$
But $\widetilde{\Reg}_{N_\sharp}=[\omega,N_\sharp]\Reg_\sharp$, so this is equal to 
$$
\left(\frac{[\omega,N_\sharp]}{[\nu_\sharp,\nu_\flat]}\Reg_\sharp, \frac{-[\omega,N_\flat]}{[\nu_\sharp,\nu_\flat]}\Reg_\flat\right)\hidari \nu_\sharp \\ \nu_\flat \migi.$$\end{linenomath} The rest follows from explicit calculation of the factors preceding the regulators.
\end{proof}

What is know so far is the following theorem of Kato:
\theorem[{\cite{kato}, cf. \cite[Theorem 9.4]{kobayashi} when $a_p=0$}]
In Conjectures \ref{tandempadicbsd}, \ref{equivalentformulationofabove}, \ref{bernardiandperrinriou}, and \ref{mazurtateteitelbaum}, the orders of vanishing of the $p$-adic $L$-functions are all $\geq r(E)$.

\subsection{A remark in the ordinary case}\label{ordinarybsd}
When $p$ is ordinary, there is the following conjecture of Mazur, Tate, and Teitelbaum.

\begin{conjecture}[Mazur, Tate, and Teitelbaum]\label{mazurtateteitelbaum} Let $p$ be a good ordinary prime, and denote by $r^{an}$ the order of vanishing of $L_p(E,\alpha,T)$ at $0$, and by $L_p^*(E)$ the leading coefficient of the Taylor expansion at $0$.
\begin{enumerate}
\item We have $r^{an}=r(E)$.
\item $L_p^*(E)=\left(1-\frac{1}{\alpha}\right)^2\frac{\prod_v c_v \cdot \#\Sha(E/\Q)\cdot \Reg_{\frac{1}{\beta}}(E/\Q)}{(\#E(\Q)_{tors})^2}$.
\end{enumerate}
\end{conjecture}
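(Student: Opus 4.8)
The plan is to reduce the statement for all four conjectures to a single scalar inequality, and then to feed that inequality into Kato's Euler system. First I would observe that the four orders of vanishing coincide. In the ordinary case (Conjecture \ref{mazurtateteitelbaum}) the relevant function is $L_p(E,\alpha,T)$ itself, with $\alpha$ the unit root. In the supersingular cases, Lemma \ref{ordersofvanishingagree} gives $\ord_{T=0}L_p(E,\alpha,T)=\ord_{T=0}L_p(E,\beta,T)=:r^{an}$. Writing $L_p^{PR}(E,T)=L_\alpha\,\nuek+L_\beta\,\nudo$ with $\{\nuek,\nudo\}$ a basis of $D_p(E)(\alpha)$ shows that its lowest nonvanishing coefficient is $L_\alpha^*\,\nuek+L_\beta^*\,\nudo\neq0$, so $\ord_{T=0}L_p^{PR}(E,T)=r^{an}$. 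Finally, since the value $Z=\Log_{\alpha,\beta}(1)$ is invertible, the identity $(L_\alpha,L_\beta)=(L_\sharp,L_\flat)\Log_{\alpha,\beta}(1+T)$ shows that the minimum of the orders of vanishing of the left vector equals that of the right (the leading term of one vector is that of the other times the invertible matrix $Z$), whence $r_p^\natural=r^{an}$; this is precisely the product-rule content of the proof of Theorem \ref{belowthisone}(1). Hence it suffices to prove, in both the ordinary and the supersingular cases, the core inequality $r^{an}=\ord_{T=0}L_p(E,\alpha,T)\geq r(E)$.

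For this core inequality I would invoke the two standard outputs of Kato's Euler system (refined by Kobayashi and by the $\sharp,\flat$-theory in the supersingular case). Let $X$ be the Pontryagin dual of the relevant Selmer group over the cyclotomic $\Z_p$-extension: the usual Selmer group when $p$ is ordinary, and Kobayashi's $\pm$- (equivalently the $\sharp,\flat$-) Selmer groups when $p$ is supersingular. Kato's theorem provides (i) that $X$ is $\Lambda$-torsion and that the corresponding $p$-adic $L$-function is nonzero, and (ii) the divisibility $\Char_\Lambda(X)\mid L_p$, coming from the fact that the Euler system bounds the Selmer group from above. Taking $(T)$-adic orders on both sides of (ii) yields $\ord_{T=0}L_p\geq\ord_{T=0}\Char_\Lambda(X)$.

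The remaining half is module-theoretic descent. For a torsion $\Lambda$-module one has $\ord_{T=0}\Char_\Lambda(X)\geq\rank_{\Z_p}(X/TX)$, since each cyclic factor $\Lambda/(T^e)$ contributes $e$ to the left side but only $1$ to the right (factors prime to $T$ contribute $0$ to both, and finite submodules only widen the gap). A control theorem — Mazur's in the ordinary case, Kobayashi's in the supersingular case — identifies $X/TX$ up to finite kernel and cokernel with the dual of $\Sel_{p^\infty}(E/\Q)$, so $\rank_{\Z_p}(X/TX)=\mathrm{corank}_{\Z_p}\Sel_{p^\infty}(E/\Q)$. The Kummer sequence embeds $E(\Q)\otimes\Q_p/\Z_p$ into $\Sel_{p^\infty}(E/\Q)$, giving $\mathrm{corank}_{\Z_p}\Sel_{p^\infty}(E/\Q)\geq r(E)$. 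Chaining these produces $\ord_{T=0}L_p\geq r(E)$, which by the first paragraph transports to every order of vanishing in Conjectures \ref{tandempadicbsd}, \ref{equivalentformulationofabove}, \ref{bernardiandperrinriou}, and \ref{mazurtateteitelbaum}.

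The main obstacle lies entirely in the supersingular case. There $L_p(E,\alpha,T)$ has unbounded coefficients and is not an element of $\Lambda$, so the divisibility (ii) and the control theorem cannot be run with it directly; instead one works with the integral functions $L_\sharp,L_\flat$ and Kobayashi's $\pm$- (resp. the $\sharp,\flat$-) Selmer groups, proving $\min(\ord L_\sharp,\ord L_\flat)\geq r(E)$. This requires checking that the modified local conditions at $p$ still contain the Kummer image of $E(\Q)$, so that both $\mathrm{corank}\,\Sel^\sharp(E/\Q)$ and $\mathrm{corank}\,\Sel^\flat(E/\Q)$ dominate $r(E)$, and that the supersingular control theorem behaves correctly at $T=0$. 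Once these inputs are secured, the equality $\min(\ord L_\sharp,\ord L_\flat)=r^{an}$ from the first paragraph closes the argument for all four conjectures at once.
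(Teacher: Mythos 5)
The statement you set out to prove is not a theorem of the paper: Conjecture \ref{mazurtateteitelbaum} is stated as an \emph{open conjecture}, attributed to Mazur, Tate, and Teitelbaum (a combination of BSD($p$) from \cite{mtt} and the remark there predicting the nonvanishing of $\Reg_{\frac{1}{\beta}}$), and the paper offers no proof of it. The only thing the paper records as known is strictly weaker, namely Kato's theorem: in Conjectures \ref{tandempadicbsd}, \ref{equivalentformulationofabove}, \ref{bernardiandperrinriou}, and \ref{mazurtateteitelbaum} the orders of vanishing of the $p$-adic $L$-functions are all $\geq r(E)$. Stripped of its framing, your proposal is an outline of the standard proof of exactly that one-sided inequality: Kato's Euler-system divisibility $\Char_\Lambda(X)\mid L_p$, the elementary bound $\ord_{T=0}\Char_\Lambda(X)\geq \rank_{\Z_p}(X/TX)$ for torsion $\Lambda$-modules, the control theorem (Mazur's in the ordinary case, Kobayashi's in the supersingular case), and the Kummer embedding of $E(\Q)\otimes \Q_p/\Z_p$ into the Selmer group. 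That chain is sound as far as it goes, and your bookkeeping transporting the order of vanishing among the four formulations (Lemma \ref{ordersofvanishingagree}, invertibility of $Z=\Log_{\alpha,\beta}(1)$, the product rule) correctly mirrors the paper's proof of Theorem \ref{belowthisone}(1).

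The genuine gap is your claim that ``it suffices to prove the core inequality $r^{an}\geq r(E)$.'' It does not. The conjecture asserts the \emph{equality} $r^{an}=r(E)$ together with, in part (2), an exact leading-coefficient formula involving $\#\Sha(E/\Q)$, the Tamagawa numbers, the torsion, and the $p$-adic regulator $\Reg_{\frac{1}{\beta}}$. Your argument never produces the reverse inequality $r^{an}\leq r(E)$ --- which would require, at a minimum, finiteness of $\Sha(E/\Q)[p^\infty]$ and non-degeneracy of the $p$-adic height pairing (Schneider's conjecture), both open --- and it never engages part (2) at all; note that part (2) implicitly contains the nonvanishing of $\Reg_{\frac{1}{\beta}}$, which is itself conjectural. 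So what you have sketched is a proof of Kato's theorem, which the paper cites from \cite{kato} (cf.\ \cite[Theorem 9.4]{kobayashi}) rather than proves, and your closing claim that this ``closes the argument for all four conjectures at once'' is unwarranted: no proof of Conjecture \ref{mazurtateteitelbaum} exists, in this paper or elsewhere.
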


\remark These conjectures are a combination of \cite[\S II.10, Conjecture (BSD($p$))]{mtt}, which asserts that $r^{an}\geq r(E)$, and the remark thereafter, which predicts the non-vanishing of $\Reg_{\frac{1}{\beta}}(E/\Q)$.

\remark We encounter the term $\Reg_{\frac{1}{\beta}}$ (rather than $\Reg_{\frac{1}{\alpha}}$) because of our choice of Frobenius $\phi=\frac{F}{p}$, where $F$ is the Frobenius as chosen in \cite{mazursteintate} or \cite{kedlaya}. The regulator comes from the normalized height associated to the unit-eigenvector $\alpha$ of $F$ on $D_p(E)$, so that the eigenvalue for $\phi$ becomes $\frac{\alpha}{p}=\frac{1}{\beta}$.\footnote{In \cite[Section 4.1]{steinwuthrich}, the regulator was accidentally constructed from the height coming from the normalized eigenvector of $\phi$ with eigenvalue $\frac{1}{\alpha}$. Everything works in that section if one replaces $\alpha$ by $\beta$.}

In the ordinary case, recall that $L_\sharp$ and $L_\flat$ are not well-defined, but their values at $T=0$ are. In particular this means that when $a_p=2$ and $p$ is odd, we may have $L_\flat(E,0)=0$ while $L(E,1)\neq0$, reminiscent of an extra zero phenomenon. This leads us to ask:

\begin{question} \textit{Where does this extra-zero phenomenon come from?}
\end{question}


\rm

\section{The greatest common divisor}
We now generalize and give some evidence for the following conjecture found in \cite[Problem 3.2]{kuriharapollack}.

Recall that $d_n$ denoted the normalized jump in the ranks at the $n$th level of the cyclotomic tower:
$$ d_n=\frac{1}{p^n-p^{n-1}}(\rank E(\Q_n)- \rank E(\Q_{n-1}))$$

\begin{conjecture}[The problem of Kurihara and Pollack]\label{kuripola}

Let $E/\Q$ be an elliptic curve so $p$ is an odd prime of good supersingular reduction and $a_p=0$. Then
$$\gcd(L_\sharp(E,T),L_\flat(E,T))=\left(T^{r}\prod_{ d_n \geq1 \text{ and } n\geq 1}\Phi_{p^n}^{d_n-1}(1+T)\right).$$
\end{conjecture}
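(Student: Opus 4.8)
The plan is to prove the more general statement of Proposition~\ref{kuriharapollack}, from which Conjecture~\ref{kuripola} follows once the identifications $r^{an}=r$, $\epsilon_n^{an}-1=d_n-1$ and $P_\Sha(E,T)=1$ are supplied by the finiteness of $\Sha$ together with $p$-adic BSD. Working in the Iwasawa algebra $\Lambda=\Z_p[[T]]$, I would first reduce the computation of $\gcd(L_\sharp,L_\flat)$ to a local order-of-vanishing problem at each point $\zeta_{p^n}-1$. By the Weierstrass preparation theorem the gcd is, up to a unit, a power of $p$ times a product of the irreducible distinguished polynomials dividing both $L_\sharp$ and $L_\flat$; the only such polynomials whose roots are $p$-power roots of unity are $T$ (for $n=0$) and the $\Phi_{p^n}(1+T)$, which are irreducible over $\Q_p$ with the primitive $p^n$-th roots of unity as roots. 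Gathering the remaining common factors, namely those with no zero at any $\zeta_{p^n}-1$, into one polynomial $P_\Sha(E,T)$ produces exactly the shape claimed, so the whole problem becomes the determination, for each $n$, of
$$v_n^{\min}:=\min\bigl(\ord_{\zeta_{p^n}-1}L_\sharp,\ \ord_{\zeta_{p^n}-1}L_\flat\bigr).$$

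The engine is the factorization $(L_\alpha,L_\beta)=(L_\sharp,L_\flat)\Log_{\alpha,\beta}(1+T)$ of Theorem~\ref{maintheorem}, inverted to read $(L_\sharp,L_\flat)=(L_\alpha,L_\beta)\Log_{\alpha,\beta}(1+T)^{-1}$. The key structural input is the local behaviour of $\Log_{\alpha,\beta}$, which I would extract from its determinant. Using $\det\smat{a_p & 1\\ \Phi_{p^k}(1+T) & 0}=-\Phi_{p^k}(1+T)$ together with $\prod_{k=1}^{n}\Phi_{p^k}(1+T)=((1+T)^{p^n}-1)/T$ and passing to the limit yields $\det\Log_{\alpha,\beta}(1+T)=c\cdot\log_p(1+T)/T$ for a nonzero constant $c$; this has a simple zero at each $\zeta_{p^n}-1$ with $n\geq 1$ and is a unit at $T=0$. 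Since the entries of $\Log_{\alpha,\beta}$ are holomorphic and cannot all vanish at $\zeta_{p^n}-1$ (otherwise the determinant would vanish there to order at least two), the Smith normal form of $\Log_{\alpha,\beta}$ over the local ring at $\zeta_{p^n}-1$ is $A\,\smat{u_1 & 0\\ 0 & u_2 t}\,B$, with $A,B\in\GL_2$ of that ring, $u_1,u_2$ units, and $t$ a local parameter. At $T=0$ the determinant is a unit, so $Z=\Log_{\alpha,\beta}(1)$ is invertible and $v_0^{\min}=\min(\ord_0 L_\alpha,\ord_0 L_\beta)=r^{an}$, which produces the $T^{r^{an}}$ factor immediately.

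For $n\geq 1$ I would substitute the Smith form into the inverted relation. Setting $(P,Q)=(L_\alpha,L_\beta)B^{-1}$, multiplication by the unit matrix $B^{-1}$ preserves the content (the minimum of the valuations of the two entries), so $\min(\ord P,\ord Q)=\min(\ord L_\alpha,\ord L_\beta)$, and by the extension of Lemma~\ref{ordersofvanishingagree} from $T=0$ to $\zeta_{p^n}-1$ this common value is $d_n^{an}$. The inverse Smith form gives $(L_\sharp,L_\flat)=(Pu_1^{-1},\,Qu_2^{-1}t^{-1})A^{-1}$; because $L_\sharp,L_\flat\in\Lambda$ are genuinely holomorphic, the potential pole must cancel, forcing $\ord Q\geq 1$. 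A two-line case analysis on whether $\ord Q=d_n^{an}$ or $\ord Q>d_n^{an}$ then gives $v_n^{\min}=d_n^{an}-1$ or $v_n^{\min}=d_n^{an}$ respectively, i.e. $v_n^{\min}=\epsilon_n^{an}-1$ with $\epsilon_n^{an}\in\{d_n^{an},d_n^{an}+1\}$; when $d_n^{an}=0$ the constraint $\ord Q\geq 1$ forces the second case, so only the indices with $d_n^{an}\geq 1$ contribute, matching the range of the product.

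The main difficulty is twofold. The first, more technical, point is the analytic input $\ord_{\zeta_{p^n}-1}L_\alpha=\ord_{\zeta_{p^n}-1}L_\beta=d_n^{an}$ for every $n$, i.e. the extension of Lemma~\ref{ordersofvanishingagree} to all $p$-power roots of unity; I expect the argument of \cite[Lemma 6.6]{pollack} to transfer once the interpolation formulas for $L_\alpha$ and $L_\beta$ at $\zeta_{p^n}-1$ are used, since there the two functions are governed by the same twisted $L$-value and differ only by a nonzero constant. The second, and \emph{genuinely} hard, point is the $\pm1$ ambiguity in $\epsilon_n^{an}$: deciding which case occurs is exactly deciding whether the leading term of $(L_\alpha,L_\beta)$ at $\zeta_{p^n}-1$ lies along the degenerate direction $t$ of the Smith form of $\Log_{\alpha,\beta}$, and it is precisely this alignment question that stands between Proposition~\ref{kuriharapollack} and the sharp exponent $d_n-1$ of Conjecture~\ref{kuripola}. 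For odd $p$ with $a_p=0$ the explicit half-logarithms of \cite{pollack} make the degenerate direction transparent, which is what allows Kurihara and Pollack to assert the exact formula; carrying this out for general $a_p$ is the remaining obstacle.
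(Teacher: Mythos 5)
A point of framing first: the statement you were asked to prove is a \emph{conjecture}, and the paper does not prove it either --- it is open. What the paper actually establishes is the weaker Proposition \ref{kuriharapollack}, where the exponent is pinned down only up to the ambiguity $\epsilon_n^{an}\in\{d_n^{an},d_n^{an}+1\}$, an extra factor $P_\Sha(E,T)$ appears, and $r^{an}$ replaces $r$. Your proposal does exactly the same and says so honestly: you prove the Proposition and correctly identify the missing identifications ($r^{an}=r$, resolution of the $\pm1$ ambiguity, $P_\Sha=1$) as requiring conjectural input. So the right comparison is between your proof of the Proposition and the paper's, and there your route is genuinely different --- and correct.

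The paper never inverts the matrix relation: it keeps $(L_\alpha,L_\beta)=\overrightarrow{L}_p\mathcal{M}\CCC_n\Xi_n$ in product form, isolates the unique factor $\CCC_n$ whose determinant vanishes at $\zeta_{p^n}-1$, and controls orders of vanishing through two hand-made lemmas: Lemma \ref{lemma1} (multiplication by a matrix invertible at $s$ preserves the minimal order, applied to $\Xi_n$ and to $\mathcal{M}$) and Lemma \ref{lemma2} (a four-case analysis showing that one multiplication by $\CCC_n$ shifts the order by $0$ or $-1$). You instead invert the factorization of Theorem \ref{maintheorem}, compute $\det\Log_{\alpha,\beta}(1+T)=c\cdot\log_p(1+T)/T$ (correct: the telescoping product $\prod_k\Phi_{p^k}(1+T)=((1+T)^{p^k}-1)/T$ against the normalization $\smat{a_p & 1\\ p & 0}^{-(N+1)}$ gives exactly this), and read off the local structure from the Smith normal form $\mathrm{diag}(1,t)$ over the germ ring at $\zeta_{p^n}-1$, with holomorphy of $L_\sharp,L_\flat$ forcing $\ord Q\geq 1$. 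Both arguments rest on the same two pillars: equality of $\ord_{\zeta_{p^n}-1}L_\alpha$ and $\ord_{\zeta_{p^n}-1}L_\beta$ (which the paper builds into the $(f,\iota(f))$ hypothesis of Lemma \ref{lemma1} and you flag as the needed extension of Lemma \ref{ordersofvanishingagree} --- neither text supplies full details, so this is not a differentiator), and simplicity of the degeneration of $\Log_{\alpha,\beta}$ at $\zeta_{p^n}-1$ (which the paper extracts from the product structure and you from the simple zero of the determinant). What your version buys: it makes the provenance of the $\pm1$ ambiguity structural (alignment of the leading term of $(L_\alpha,L_\beta)$ with the degenerate direction), it makes the restriction to indices with $d_n^{an}\geq1$ automatic (when $d_n^{an}=0$ the constraint $\ord Q\geq1$ forces the non-dropping case), and it avoids the separate verification that $\det\Xi_n(\zeta_{p^n}-1)\neq0$ for the tail matrix. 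What the paper's version buys: it is elementary and self-contained, staying inside explicit $2\times2$ manipulations in $\Z_p[[T]]$ with no local-ring formalism. Either is a legitimate proof of Proposition \ref{kuriharapollack}; neither closes the gap to Conjecture \ref{kuripola}, and you have located that gap exactly where it is.
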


\rm Note that this is an equality of {\it ideals}, since the greatest common divisor of two functions of ${\Q\otimes \Z_p[[T]]}$ is only well-defined as a $\Z_p[[T]]$-ideal. We can give the following proposition:
\begin{proposition}\label{kuriharapollack}Let $E/\Q$ be an elliptic curve and $p$ a prime of good supersingular reduction. For some polynomial $P_\Sha(E,T)$ with $P_\Sha(E,\zeta_{p^n}-1)\neq0$ for $n\geq0,$
$$\gcd\left(L_\sharp(E,T),L_\flat(E,T)\right)=\left(P_\Sha(E,T)\cdot T^{r^{an}}\prod_{ d_n \geq1 \text{ and } n\geq 1}\Phi_{p^n}^{\epsilon_n^{an}-1}(1+T)\right),$$
where $\epsilon_n^{an}-1=d_n^{an}-1$ or $\epsilon_n^{an}-1=d_n^{an}.$
\end{proposition}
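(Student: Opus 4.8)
The plan is to read off the greatest common divisor one prime at a time in the unique factorization domain $\Z_p[[T]]$, working after inverting $p$ so that the prime $(p)$ becomes a unit. The relevant height-one primes are then $(T)$, the distinguished polynomials $\left(\Phi_{p^n}(1+T)\right)$ for $n\geq1$ (each irreducible over $\Z_p$, since $\Phi_{p^n}(1+T)\equiv T^{\phi(p^n)}\pmod p$ and $\Phi_{p^n}$ is irreducible over $\Q_p$), and all remaining distinguished polynomials, whose common part I will \emph{define} to be $P_\Sha(E,T)$. For a nonzero $f\in\Z_p[[T]]$ the exponent of $\left(\Phi_{p^n}(1+T)\right)$ in its factorization is its order of vanishing at the single point $\zeta_{p^n}-1$, the primitive roots being Galois-conjugate. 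So, setting $M_n:=\min\!\left(\ord_{\zeta_{p^n}-1}L_\sharp,\ord_{\zeta_{p^n}-1}L_\flat\right)$ for $n\geq1$ and $M_0:=\min\!\left(\ord_{T=0}L_\sharp,\ord_{T=0}L_\flat\right)$, the proposition amounts to showing $M_0=r^{an}$ and $M_n=\epsilon_n^{an}-1\in\{d_n^{an}-1,\,d_n^{an}\}$; that only finitely many $M_n$ are nonzero is automatic, as a nonzero element of $\Z_p[[T]]$ has finite $\lambda$-invariant. By construction $P_\Sha(E,T)$ collects exactly the remaining common prime powers, so $P_\Sha(E,\zeta_{p^n}-1)\neq0$ for all $n\geq0$.

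The engine is the determinant of the interpolation matrix. From the defining product I would compute
\[
\det\Log_{\alpha,\beta}(1+T)=\frac{\beta-\alpha}{p^2}\cdot\frac{\log_p(1+T)}{T},
\]
using $\det\smat{a_p & 1\\ \Phi_{p^k}(1+T) & 0}=-\Phi_{p^k}(1+T)$, the telescoping identity $\prod_{k=1}^{n}\Phi_{p^k}(1+T)=\frac{(1+T)^{p^n}-1}{T}$, and $\frac{(1+T)^{p^n}-1}{p^n}\to\log_p(1+T)$, with the exponent $-(N+1)$ supplying the factor $p^{-n}$ that converts the telescoped product into the $p$-adic logarithm in the limit (the case $p=2$ differs only in the constant power of $p$). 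Since the $p$-adic logarithm has simple zeros exactly at $T=\zeta_{p^n}-1$ for $n\geq0$, this yields $\ord_{\zeta_{p^n}-1}\det\Log_{\alpha,\beta}=1$ for every $n\geq1$ while $\det\Log_{\alpha,\beta}(0)\neq0$. Localizing at $\zeta_{p^n}-1$ gives a discrete valuation ring with uniformizer $\pi=\Phi_{p^n}(1+T)$ in which $\Log_{\alpha,\beta}$ has holomorphic entries and determinant of valuation one; as the valuations of its two invariant factors are non-negative and sum to one, the Smith normal form is forced to be $\smat{1 & 0\\ 0 & \pi}$ up to units. At $T=0$ the matrix is a unit.

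I would then push vanishing orders through $(L_\alpha,L_\beta)=(L_\sharp,L_\flat)\Log_{\alpha,\beta}$. Writing $\Log_{\alpha,\beta}=U\smat{1&0\\0&\pi}V$ with $U,V$ in $\GL_2$ of the local ring and setting $(\tilde L_\sharp,\tilde L_\flat)=(L_\sharp,L_\flat)U$, $(\tilde L_\alpha,\tilde L_\beta)=(L_\alpha,L_\beta)V^{-1}$, the unimodular changes preserve the minimal valuation of a row, so $M_n=\min(\ord\tilde L_\sharp,\ord\tilde L_\flat)$ and $s_n:=\min(\ord L_\alpha,\ord L_\beta)=\min(\ord\tilde L_\alpha,\ord\tilde L_\beta)$, while the diagonal relation reads $\tilde L_\alpha=\tilde L_\sharp$ and $\tilde L_\beta=\pi\tilde L_\flat$. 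Because $\tilde L_\flat$ is holomorphic, $\ord\tilde L_\beta\geq1$, and a short case analysis on which of $\ord\tilde L_\alpha,\ord\tilde L_\beta$ realizes the minimum $s_n$ gives $M_n\in\{s_n-1,s_n\}$ — the value $s_n-1$ occurring exactly when $\tilde L_\beta$ attains the minimum, and $M_n=s_n=0$ harmlessly when $s_n=0$. Identifying $s_n$ with $d_n^{an}$, the common order of vanishing of $L_\alpha$ and $L_\beta$ at $\zeta_{p^n}-1$, and setting $\epsilon_n^{an}=M_n+1$ gives $\epsilon_n^{an}-1\in\{d_n^{an}-1,d_n^{an}\}$; at $T=0$ the matrix is a unit, so no shift occurs and $M_0=r^{an}$ by Lemma \ref{ordersofvanishingagree}.

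The step I expect to be the \emph{main obstacle} is the determinant computation together with the identification $s_n=d_n^{an}$. The entire $\pm1$ ambiguity of the proposition rests on $\det\Log_{\alpha,\beta}$ vanishing to order exactly one at each $\zeta_{p^n}-1$, which has to be extracted from the delicate limit defining $\Log_{\alpha,\beta}$, in particular from the interplay of the growing exponent $-(N+1)$ with the growing product of cyclotomic polynomials. The second delicate point is that $d_n^{an}$ be well-defined, i.e. that $L_\alpha$ and $L_\beta$ vanish to the same order at $\zeta_{p^n}-1$; I would establish this by running the argument of \cite[Lemma 6.6]{pollack} at $\zeta_{p^n}-1$ in place of $T=0$.
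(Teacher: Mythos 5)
Your argument is correct, and it lands on the same two local computations as the paper (no shift at $T=0$, a shift of at most one at each $\zeta_{p^n}-1$), but the key step is carried out by a genuinely different mechanism. The paper never computes $\det\Log_{\alpha,\beta}$: it instead exploits the shape of the defining product to write $(L_\alpha,L_\beta)=\overrightarrow{L}_p\,\mathcal{M}\,\CCC_n\,\Xi_n$ with $\mathcal{M}=\CCC_1\cdots\CCC_{n-1}$, so that the only factor singular at $\zeta_{p^n}-1$ is the explicit matrix $\CCC_n=\smat{a_p & p\\ -\Phi_{p^n}(1+T) & 0}$; its Lemma \ref{lemma2} is an elementary case analysis on this single factor producing the dichotomy $d_n^{an}-1$ versus $d_n^{an}$, while its Lemma \ref{lemma1} (the conjugation trick $L_\beta=\iota(L_\alpha)$ combined with nonvanishing of the determinant) disposes of the invertible outer factors $\mathcal{M}$ and $\Xi_n$ and simultaneously supplies the equality $\ord_{\zeta_{p^n}-1}L_\alpha=\ord_{\zeta_{p^n}-1}L_\beta$ that you flag as your second delicate point — your plan of rerunning \cite[Lemma 6.6]{pollack} at $\zeta_{p^n}-1$ is the same idea in different clothing. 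Your route — the telescoped determinant $\det\Log_{\alpha,\beta}=\frac{\beta-\alpha}{p^2}\cdot\frac{\log_p(1+T)}{T}$ for odd $p$ (for $p=2$ the constant is $-\frac{\beta-\alpha}{p^3}$; besides the power of $p$ there is also an immaterial sign), hence simple zeros exactly at the $\zeta_{p^n}-1$ with $n\geq 1$ and a unit at $T=0$, followed by Smith normal form over the local ring — costs more ring-theoretic setup (one must check that the localization of the ring of analytic functions on the open disk at $\zeta_{p^n}-1$ is a DVR, which it is), but it buys something the paper leaves unproved: the paper merely asserts the existence of $\Xi_n$ with $\det\Xi_n(\zeta_{p^n}-1)\neq0$, and your global determinant formula is precisely what justifies that assertion. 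Conversely, the paper's factor-by-factor analysis avoids any limit computation of determinants and keeps the argument entirely elementary. The remaining bookkeeping coincides: your statement $M_0=r^{an}$ is the paper's Corollary \ref{lemma3}, and your construction of $P_\Sha$ by unique factorization in $\Z_p[[T]]$ makes explicit what the paper leaves implicit.
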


\convention Given a vector $(f(T),g(T))$ of $p$-adic analytic functions, we define its order of vanishing at $s$ by $\ord_{T=s}(f(T),g(T)):=\min(\ord_{T=s}f(T),\ord_{T=s}g(T))$.

\begin{lemma}\label{lemma1}
Denote by $\iota$ (any) complex conjugation. Let $f(T), g_1(T)$, $g_2(T)$, and the entries of a $2\times2$ matrix $M(T)$ be $p$-adic analytic functions on the open unit disc and $e=\ord_{T=s}f(T)$ so that 
$$\left(f(T),\iota\left(f(T)\right)\right)=(g_1(T),g_2(T))M(T)$$
and $\det M(s) \neq 0$. Then we have $\ord_{T=s}(g_1(T),g_2(T))=e$.
\end{lemma}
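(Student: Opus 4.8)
The plan is to treat $\ord_{T=s}$ as a valuation and to exploit that $M(T)$ is invertible near $s$. Write $\mathcal{O}_s$ for the ring of $p$-adic analytic germs at $T=s$. Since a nonzero analytic function on the open unit disc has a finite-order zero at $s$ (Weierstrass preparation/Strassmann), $\mathcal{O}_s$ is a discrete valuation ring with uniformizer $T-s$ and valuation $\ord_{T=s}$, and I extend $\ord_{T=s}$ to row vectors by $\ord_{T=s}(v_1,v_2)=\min_i \ord_{T=s}(v_i)$ (the \emph{content}). Because $\det M(s)\neq 0$, the determinant is a unit of $\mathcal{O}_s$, so $M(T)\in\GL_2(\mathcal{O}_s)$ and $M(T)^{-1}$ again has entries in $\mathcal{O}_s$.

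The first key step is that right multiplication by an element of $\GL_2(\mathcal{O}_s)$ preserves the content of a row vector. Given any $v(T)=(v_1(T),v_2(T))\in\mathcal{O}_s^2$ with $m:=\ord_{T=s}(v)<\infty$, I factor $v(T)=(T-s)^m\,w(T)$ with $w(T)\in\mathcal{O}_s^2$ and $w(s)\neq 0$. Then $v(T)M(T)=(T-s)^m\,w(T)M(T)$, and since $M(s)$ is invertible and $w(s)\neq 0$ we have $w(s)M(s)\neq 0$, i.e. $\ord_{T=s}(wM)=0$. Hence $\ord_{T=s}(vM)=m=\ord_{T=s}(v)$. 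Applying this to $v=(f,\iota(f))$ and the matrix $M(T)^{-1}$, and using $(g_1,g_2)=(f,\iota(f))M(T)^{-1}$, I obtain
$$\ord_{T=s}(g_1,g_2)=\ord_{T=s}\bigl(f,\iota(f)\bigr)=\min\bigl(\ord_{T=s}f,\ \ord_{T=s}\iota(f)\bigr).$$

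It remains to identify this minimum with $e=\ord_{T=s}f$, and this is the delicate step. Since $\iota$ is an isometric involution of $\C_p$ acting on Taylor coefficients, one checks directly that $\iota(f)^{(k)}(\iota(s))=\iota\bigl(f^{(k)}(s)\bigr)$ for all $k$, so that $\ord_{T=s}\iota(f)=\ord_{T=\iota(s)}f$. Thus the minimum above equals $e$ precisely when $\ord_{T=\iota(s)}f\geq\ord_{T=s}f$; this is immediate whenever $s$ is fixed by $\iota$, in particular for $s=0$, which governs the $T^{r^{an}}$-factor in the application, and in the remaining cyclotomic cases it reduces to the simultaneous vanishing of the conjugate values, cf. Lemma \ref{ordersofvanishingagree}. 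I expect this interaction between complex conjugation and the evaluation point $s$ to be the main obstacle: the content-preservation argument is purely formal, whereas controlling $\ord_{T=s}\iota(f)$ requires knowing that $f$ and its conjugate vanish to the same order at $s$.
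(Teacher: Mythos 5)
Your $\GL_2(\mathcal{O}_s)$/content argument is correct, and in substance it is the same as the paper's one-line proof: the paper's ``by calculus'' means differentiating $(f,\iota(f))=(g_1,g_2)M$ repeatedly via the Leibniz rule, evaluating at $s$, inducting, and using $\det M(s)\neq 0$. Both packagings establish, unconditionally, the identity $\ord_{T=s}(g_1,g_2)=\ord_{T=s}\bigl(f,\iota(f)\bigr)=\min\bigl(\ord_{T=s}f,\ \ord_{T=s}\iota(f)\bigr)$. So up to this point you and the paper agree, your version being the cleaner one.

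The delicate step you flag is the real crux, and you should know that the paper does not supply it either: in the printed proof the $m$-th derivative of the second entry at $s$ is written as $\iota\bigl(f^{(m)}(s)\bigr)$, i.e.\ as the conjugate of a \emph{value} of $f$, which silently invokes the identity $\iota(f)^{(m)}(s)=\iota\bigl(f^{(m)}(s)\bigr)$; as you observe, this is legitimate exactly when (an extension of) $\iota$ fixes $s$, e.g.\ $s=0$, which is also all that Lemma \ref{ordersofvanishingagree} covers. At the points $s=\zeta_{p^n}-1$ where Lemma \ref{lemma1} is invoked in the proof of Proposition \ref{kuriharapollack}, this escape is unavailable: for supersingular $p\geq 5$ one has $a_p=0$, so $\Q_p(\alpha)=\Q_p(\sqrt{-p})\subset\Q_p(\zeta_{p^n})$, hence \emph{every} extension of complex conjugation moves $\zeta_{p^n}$, and $\ord_{T=s}\iota(f)=\ord_{T=\iota(s)}f$ is an order of vanishing at a point in the \emph{other} $\Gal(\overline{\Q}_p/\Q_p(\alpha))$-orbit of roots of $\Phi_{p^n}(1+T)$, so it cannot be compared with $\ord_{T=s}f$ by Galois equivariance. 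The missing input is precisely $\ord_{T=s}L_\beta(E,T)=\ord_{T=s}L_\alpha(E,T)$ at cyclotomic points, which neither your argument nor the paper's establishes (interpolation only controls the values, not higher derivatives). So you have not overlooked an idea present in the paper; you have made explicit a gap that the paper's notation hides. Note also that the unconditional minimum statement you did prove is what the application really uses --- indeed the second invocation of the lemma in the proof of Proposition \ref{kuriharapollack}, applied to $\overrightarrow{L}_p\mathcal{M}$, concerns a pair that are not conjugates of each other at all, so only the content statement can be meant there --- provided $d_n^{an}$ is read as $\min\bigl(\ord_{T=\zeta_{p^n}-1}L_\alpha,\ \ord_{T=\zeta_{p^n}-1}L_\beta\bigr)$, or the discrepancy is absorbed into the stated ambiguity in $\epsilon_n^{an}$.
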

\begin{proof}By calculus, $\left(f^{(m)}(s),\iota\left({f^{(m)}}(s)\right)\right)=(0,0)$ if and only if $(g_1^{(m)}(s),g_2^{(m)}(s))=(0,0)$ for ${m \geq 0}$.
\end{proof}
\rm
\begin{corollary}\label{lemma3}
The exact power of $T$ dividing $\gcd\left(L_\sharp(E,T),L_\flat(E,T)\right)$ is $T^{r^{an}}$.
\end{corollary}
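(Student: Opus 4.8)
The plan is to deduce this directly from Lemma \ref{lemma1} together with the factorization of Theorem \ref{maintheorem}. The crucial structural observation is that in the supersingular case the roots $\alpha$ and $\beta$ of $Y^2-a_pY+p$ are complex conjugate, so that complex conjugation $\iota$ (which fixes the coefficients of $L_\sharp,L_\flat\in\Z_p[[T]]$) interchanges $\alpha$ and $\beta$ and hence swaps $L_\alpha(E,T)$ and $L_\beta(E,T)$. Concretely, $\iota$ sends the last factor $\smat{-1 & -1\\ \beta & \alpha}$ of $\Log_{\alpha,\beta}(1+T)$ to itself right-multiplied by $\smat{0 & 1\\ 1 & 0}$ while fixing every other factor, so that $\iota(L_\alpha(E,T))=L_\beta(E,T)$. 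Thus the pair satisfies $(L_\alpha(E,T),L_\beta(E,T))=(f(T),\iota(f(T)))$ with $f=L_\alpha(E,T)$, which is exactly the shape required by Lemma \ref{lemma1}.

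Setting $g_1=L_\sharp$, $g_2=L_\flat$, $M(T)=\Log_{\alpha,\beta}(1+T)$, and $s=0$, the factorization $(L_\alpha,L_\beta)=(L_\sharp,L_\flat)\Log_{\alpha,\beta}(1+T)$ puts us in position to apply the lemma, provided its hypothesis $\det M(0)=\det Z\neq 0$ holds. This nonvanishing is the one point that needs genuine verification, and it is where I expect the only real work to lie. I would check it by evaluating the defining product at $T=0$: since $\Phi_{p^k}(1)=p$ for every $k\geq 1$, each factor $\smat{a_p & 1\\ \Phi_{p^k}(1+T) & 0}$ collapses to $\smat{a_p & 1\\ p & 0}$ at $T=0$, and the infinite product telescopes against the factor $\smat{a_p & 1\\ p & 0}^{-(N+1)}$ to leave the finite, invertible expression $Z=\smat{a_p & 1\\ p & 0}^{-2}\smat{-1 & -1\\ \beta & \alpha}$ (with exponent $-3$ in place of $-2$ when $p=2$). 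Its determinant is $(\beta-\alpha)/p^2$ up to a unit, which is nonzero because the supersingular condition forces $\alpha\neq\beta$.

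With $\det Z\neq 0$ in hand, Lemma \ref{lemma1} yields $\ord_{T=0}(L_\sharp,L_\flat)=\ord_{T=0}L_\alpha(E,T)=r^{an}$, the last equality being the definition of $r^{an}$ together with Lemma \ref{ordersofvanishingagree}. Finally I would translate this order-of-vanishing statement into the divisibility statement: writing $L_\sharp=T^a u$ and $L_\flat=T^b v$ with $u,v$ having nonzero value at $T=0$, the exact power of $T$ dividing the ideal $\gcd(L_\sharp,L_\flat)$ is $T^{\min(a,b)}$, and $\min(a,b)=\ord_{T=0}(L_\sharp,L_\flat)=r^{an}$ by the convention preceding Lemma \ref{lemma1}. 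Hence the exact power of $T$ dividing $\gcd(L_\sharp(E,T),L_\flat(E,T))$ is $T^{r^{an}}$, as claimed.
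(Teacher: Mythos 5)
Your proof is correct and follows essentially the same route the paper intends: the corollary is exactly an application of Lemma \ref{lemma1} to the factorization $(L_\alpha,L_\beta)=(L_\sharp,L_\flat)\Log_{\alpha,\beta}(1+T)$ at $s=0$, with $\iota(L_\alpha)=L_\beta$ coming from conjugation swapping $\alpha$ and $\beta$. Your explicit verification that $\det Z=\det\Log_{\alpha,\beta}(1)\neq 0$ (via $\Phi_{p^k}(1)=p$, the telescoping against $\smat{a_p & 1\\ p & 0}^{-(N+1)}$, and $\alpha\neq\beta$) is a detail the paper leaves implicit, and you have it right.
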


\begin{lemma}\label{lemma2}
Let $\overrightarrow{f}(T)=(f_1(T),f_2(T)) \text{ and }\overrightarrow{g}(T)=(g_1(T),g_2(T))$ be vectors of analytic functions on the open unit disc satisfying 
$$\overrightarrow{f}(T)=\overrightarrow{g}(T)\\\CCC_n,\text{  where } \CCC_n= \links a_p & p \\ -\Phi_{p^n}(1+T) & 0 \rechts.$$
Let $s=\zeta_{p^n}-1$. Then $\ord_{T=s}\overrightarrow{g}(T)=\ord_{T=s}\overrightarrow{f}(T)$ or $\ord_{T=s}\overrightarrow{g}(T)=\ord_{T=s}\overrightarrow{f}(T)-1$.
\end{lemma}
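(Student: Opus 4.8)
The plan is to exploit the fact that the matrix $\CCC_n$ becomes singular precisely at $T=s$, but only mildly so, so that Lemma \ref{lemma1} just barely fails to apply and its failure can cost at most one unit in the order of vanishing. Write $\overrightarrow{f}=(f_1,f_2)$ and $\overrightarrow{g}=(g_1,g_2)$, set $e:=\ord_{T=s}\overrightarrow{g}$ and $\phi:=\ord_{T=s}\overrightarrow{f}$, and recall that by the stated convention these are the minima of the orders of the two coordinates. The goal is to prove $e\leq\phi\leq e+1$, which is exactly the asserted dichotomy $\ord_{T=s}\overrightarrow{g}\in\{\phi,\phi-1\}$.

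The one piece of genuine input I would isolate first is a statement about $\det\CCC_n$. Since $1+s=\zeta_{p^n}$ is a primitive $p^n$th root of unity and the cyclotomic polynomial $\Phi_{p^n}$ is separable, the polynomial $\Phi_{p^n}(1+T)$ has a \emph{simple} zero at $T=s$. As $\det\CCC_n=p\,\Phi_{p^n}(1+T)$, this gives $\ord_{T=s}\det\CCC_n=1$. This simplicity of the zero is really the whole content of the lemma; everything else is bookkeeping with orders under matrix multiplication.

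For $\phi\geq e$ I would argue directly from $\overrightarrow{f}=\overrightarrow{g}\,\CCC_n$: each coordinate $f_i$ is a combination of $g_1,g_2$ whose coefficients are the entries of $\CCC_n$, which are polynomials in $T$ and hence of nonnegative order at $s$, so $\ord_{T=s}f_i\geq\min_j\ord_{T=s}g_j=e$ and therefore $\phi\geq e$. For the reverse inequality I would multiply through by the adjugate: right-multiplying $\overrightarrow{f}=\overrightarrow{g}\,\CCC_n$ by $\mathrm{adj}(\CCC_n)$ and using $\CCC_n\,\mathrm{adj}(\CCC_n)=\det\CCC_n\cdot I$ yields
$$\overrightarrow{g}\cdot\det\CCC_n=\overrightarrow{f}\cdot\mathrm{adj}(\CCC_n).$$
The left-hand side has order $e+1$ at $s$, since multiplying a vector by the scalar $\det\CCC_n$ adds $\ord_{T=s}\det\CCC_n=1$ to its order; the right-hand side is a combination of $f_1,f_2$ with polynomial coefficients and hence has order $\geq\phi$. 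Comparing the two sides gives $e+1\geq\phi$. Combining with $\phi\geq e$ yields $e\leq\phi\leq e+1$, as desired.

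I expect no serious obstacle; the single point that must not be glossed over is the claim that $\det\CCC_n$ has a simple rather than higher-order zero at $s$, because it is precisely the order-one vanishing of the determinant that pins the defect at one rather than something larger. This is where the hypothesis $s=\zeta_{p^n}-1$ is used, and it plays, for the rank-dropping matrix $\CCC_n$, the role that the nondegeneracy condition $\det M(s)\neq0$ played in Lemma \ref{lemma1}.
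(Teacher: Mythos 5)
Your proof is correct, but it takes a genuinely different route from the paper's. The paper works coordinate by coordinate: from $f_2(T)=pg_1(T)$ and $a_pg_1(T)-f_1(T)=\Phi_{p^n}(1+T)g_2(T)$ it reads off $\ord_{T=s}g_1=\ord_{T=s}f_2$ and $\ord_{T=s}g_2=\ord_{T=s}\bigl(a_pg_1-f_1\bigr)-1$, and then runs a four-way case analysis (on the relative sizes of $\ord_{T=s}f_1$ and $\ord_{T=s}f_2$, and on whether $a_p=0$) to conclude the dichotomy in each case. Your argument instead packages everything into the single observation that $\det\CCC_n=p\,\Phi_{p^n}(1+T)$ vanishes to order exactly one at $s$: the inequality $\ord_{T=s}\overrightarrow{f}\geq\ord_{T=s}\overrightarrow{g}$ comes from the polynomial entries of $\CCC_n$, and the inequality $\ord_{T=s}\overrightarrow{f}\leq\ord_{T=s}\overrightarrow{g}+1$ from the adjugate identity $\overrightarrow{g}\cdot\det\CCC_n=\overrightarrow{f}\cdot\mathrm{adj}(\CCC_n)$, since multiplying by the scalar $\det\CCC_n$ raises the order by exactly one while the right-hand side has order at least $\ord_{T=s}\overrightarrow{f}$. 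What your approach buys: it is shorter, avoids the case split (in particular the separate treatment of $a_p=0$, the only case handled in the earlier literature), and generalizes verbatim to any matrix of analytic functions whose determinant has a simple zero at $s$ --- more generally it bounds the defect by $\ord_{T=s}\det M$. What the paper's approach buys: the case analysis pins down, in most cases exactly, \emph{when} the drop $\ord_{T=s}\overrightarrow{g}=\ord_{T=s}\overrightarrow{f}-1$ actually occurs (e.g.\ it always does when $\ord_{T=s}f_1<\ord_{T=s}f_2$), finer information that the adjugate argument discards, though the lemma as stated does not need it.
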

\begin{proof}Since $\ord_{T=s} g_1(T)=\ord_{T=s} f_2(T)$ and {$a_pg_1(T)-f_1(T)=-\Phi_{p^n}(1+T)g_2(T)$},
$$\begin{cases}\ord_{T=s} f_1(T)< \ord_{T=s} f_2(T)& \text{ implies } \ord_{T=s} g_2(T)=\ord_{T=s} f_1(T)-1,\\
\ord_{T=s} f_1(T)= \ord_{T=s} f_2(T)\text{ and $a_p\neq0$} &\text{ implies } \ord_{T=s} g_2(T)\geq\ord_{T=s} f_1(T)-1,\\
 \ord_{T=s} f_1(T)> \ord_{T=s} f_2(T)\text{ and $a_p\neq0$} &\text{ implies }\ord_{T=s} g_2(T)=\ord_{T=s} f_2(T)-1,\\
  \ord_{T=s} f_1(T)\geq \ord_{T=s} f_2(T)\text{ and $a_p=0$} &\text{ implies } \ord_{T=s}g_2(T)=\ord_{T=s} f_1(T)-1.\end{cases}$$
\end{proof}

\begin{proof}[Proof of Proposition \ref{kuriharapollack}] We use Corollary $\ref{lemma3}$ and the following argument: Let $\mathcal{M}=I$ when $n=1$ and $\mathcal{M}=\CCC_1\cdots\CCC_{n-1}$ when $n>1$. Recall that $\overrightarrow{L}_p=\left(L_\sharp(E,T),L_\flat(E,T)\right)$, so that
$$\left(L_p(E,\alpha,T),L_p(E,\beta,T)\right)=\overrightarrow{L}_p\mathcal{M}\CCC_n\Xi_n$$
for some $2\times2$ matrix $\Xi_n$ so that $\det \Xi_n(\zeta_{p^n}-1)\neq0$. From Lemma \ref{lemma1}, ${\ord_{T={\zeta_{p^n}-1}}\left(\overrightarrow{L}_p\mathcal{M}\CCC_n\right)=d_n^{an}}$. Lemma \ref{lemma2} then implies $${\ord_{T={\zeta_{p^n}-1}}\left(\overrightarrow{L}_p\mathcal{M}\right)=d_n^{an}-1}\text{ or }{\ord_{T={\zeta_{p^n}-1}}\left(\overrightarrow{L}_p\mathcal{M}\right)=d_n^{an}}.$$ From $\det \mathcal{M} (\zeta_{p^n}-1)\neq0$ and Lemma \ref{lemma1} again, ${\ord_{T={\zeta_{p^n}-1}}\overrightarrow{L}_p=d_n^{an}-1}$ or ${\ord_{T={\zeta_{p^n}-1}}\overrightarrow{L}_p=d_n^{an}}$.
\end{proof}
In view of the problem of Kurihara and Pollack (Conjecture \ref{kuripola}), we make the following conjecture:
\begin{conjecture}Let $E/\Q$ be an elliptic curve, and $p$ a good supersingular prime. Then
$$\gcd(L_\sharp(E,T),L_\flat(E,T))=\left(T^{r}\prod_{ d_n \geq1 \text{ and } n\geq 1}\Phi_{p^n}^{d_n-1}(1+T)\right).$$
\end{conjecture}

\small Florian Sprung, Princeton University and the Institute for Advanced Study, Princeton, NJ. \newline
email: fsprung@math.princeton.edu

\end{document}